\newtheorem{theorem}{Theorem}[section]
\newtheorem{question}[theorem]{Question}
\newtheorem{proposition}[theorem]{Proposition}
\newtheorem{lemma}[theorem]{Lemma}
\newtheorem*{rep@theorem}{\rep@title}
\newcommand{\newreptheorem}[2]{%
	\newenvironment{rep#1}[1]{%
		\def\rep@title{#2 \ref{##1}}%
		\begin{rep@theorem}}%
		{\end{rep@theorem}}}
\theoremstyle{definition}
\newtheorem{definition}[theorem]{Definition}
\theoremstyle{definition}
\newtheorem{remark}[theorem]{Remark}
\newtheorem{example}[theorem]{Example}
\newtheorem{examples}[theorem]{Examples}
\newcommand{\R}{\ensuremath{\mathbb{R}}}
\newcommand{\Z}{\ensuremath{\mathbb{Z}}}
\newcommand\TT{\mathbb{T}}
\newcommand{\cD}{\mathcal{D}}
\newcommand{\cT}{\mathcal{T}}
\newcommand{\cP}{\mathcal{P}}
\newcommand{\cC}{\mathcal{C}}
\newcommand{\cS}{\mathcal{S}}
\newcommand{\C}{\mathbb{C}}
\newcommand{\CP}{\mathbb{CP}}
\newcommand{\cptwo}{\CP^2}
\newcommand{\cpone}{\CP^1}
\DeclareRobustCommand\longtwoheadrightarrow
\begin{document}
	
	\title{Symplectic 4--manifolds admit Weinstein trisections}
	
	\author[P. Lambert-Cole]{Peter Lambert-Cole}
	\address{Department of Mathematics \\ University of Georgia}
	\email{plc@uga.edu}
	
	\author[J. Meier]{Jeffrey Meier}
	\address{Department of Mathematics \\ Western Washington University}
	\email{jeffrey.meier@wwu.edu}
	\urladdr{\href{https://jeffreymeier.org}{https://jeffreymeier.org}}
	
	\author[L. Starkston]{Laura Starkston}
	\address{Department of Mathematics \\ University of California, Davis}
	\email{lstarkston@math.ucdavis.edu}
	\urladdr{\href{https://www.math.ucdavis.edu/~lstarkston/}{https://www.math.ucdavis.edu/\~{}lstarkston/}}
	
	\keywords{4-manifold, trisection, bridge trisection, branched cover, symplectic structure, Weinstein structure, quasiholomorphic curves}
	\subjclass[2010]{53D05; 57M12; 57R15}
	\maketitle
	

	\begin{abstract}
		
		We prove that every symplectic 4--manifold admits a trisection that is compatible with the symplectic structure in the sense that the symplectic form induces a Weinstein structure on each of the three sectors of the trisection. Along the way, we show that a (potentially singular) symplectic braided surface in $\CP^2$ can be symplectically isotoped into bridge position.
		
	\end{abstract}

	\section{Introduction}
	\label{sec:Intro}
	
	A trisection of a $4$--manifold is a decomposition into three basic pieces with nice overlaps, which allows one to study the smooth $4$--manifold using curves on a surface up to isotopy and certain combinatorial equivalence moves. A trisection can be viewed as an alternative to a handle decomposition, the advantage being that curves on a surface are simpler than the framed links which serve as attaching spheres of handles. Although trisections are designed to study smooth topology, we can ask in which ways a trisection can keep track of additional geometric structures on a $4$--manifold. In this article, the geometric structure of interest is a symplectic structure.
	
	In the case that $(X,\omega)$ is a symplectic 4--manifold, it is natural to look for trisections on $X$ that are compatible in some way with $\omega$.  The standard trisection of $\CP^2$ (see Example \ref{ex:CP2-standard}) can be constructed via toric geometry and therefore interacts nicely with the symplectic geometry. The contact type property of the boundaries of the three sectors in $\cptwo$ played an important role in the trisection-theoretic proof of the Thom conjecture~\cite{LC-Thom}. Further progress relating trisections and symplectic structures was made by Gay:  Every closed symplectic manifold admits a Lefschetz pencil, and Gay described how to construct a trisection from a pencil~\cite{Gay-Lefschetz}; see also related work on Lefschetz fibrations by Baykur-Saeki~\cite{BaySae} and Castro-Ozbagci~\cite{CasOzb}.
	
	Weinstein provided a method of building symplectic manifolds through handle attachments \cite{Weinstein}, but the construction has a significant limitation. The handles for a symplectic manifold of dimension $2n$ can have index at most $n$, and for symplectic $4$--manifolds, even the $2$--handles can only be attached with certain framings (less than the maximal Thurston-Bennequin framing of a Legendrian realization of the attaching circle; see~\cite[Chapter~11]{GS} and~\cite{Gompf_Stein} for details).  There is no general method to extend a symplectic structure over a $4$--dimensional $3$--handle or $4$--handle. In particular, any closed symplectic $4$--manifold cannot be built solely from Weinstein's handle construction.
	
	The main result of this article demonstrates a new advantage that trisections enjoy over handle decompositions, namely compatibility with a symplectic structure on a closed manifold. In~\cite{LM-Complex}, the first two authors proposed the notion of a \emph{Weinstein trisection} of $(X,\omega)$ as the appropriate generalization of the standard trisection of $\CP^2$, and conjectured that every symplectic 4--manifold admits a Weinstein trisection.  In this paper, we prove that conjecture.
	
	\begin{theorem}
		\label{thm:Main}
		Every closed, symplectic 4--manifold $(X,\omega)$ admits a Weinstein trisection.
	\end{theorem}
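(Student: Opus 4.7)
The plan is to realize $(X,\omega)$ as a branched cover of $\cptwo$ and then lift the standard toric trisection of $\cptwo$ through this cover. By a theorem of Auroux, after replacing $\omega$ by a sufficiently large integer multiple, $X$ admits a symplectic branched covering $f\colon X \to \cptwo$ whose branch locus $B$ is a symplectic curve in $\cptwo$ with only nodal and positive cuspidal singularities that is braided with respect to the standard pencil of lines through a point. After a Moser argument one may arrange that the symplectic form on $X$ agrees with $f^{*}\omega_{FS}$ away from a small neighborhood of $B$, reducing the problem to building the trisection of $X$ from the toric trisection $\cT = (Z_1, Z_2, Z_3)$ of $\cptwo$ via $f$.

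The central technical step, advertised in the abstract, is to symplectically isotope the singular braided surface $B$ into bridge position with respect to $\cT$: after isotopy, $B$ should meet each sector $Z_i$ in a collection of trivially embedded disks and intersect the central surface in a standard braided pattern. This step must be accomplished by an isotopy through symplectic surfaces, not merely through smooth ones, since later we will need to transport the Weinstein structure on $\cptwo$ through $f$ to $X$ without disturbing the symplectic nature of $B$. This is, in my view, the main obstacle: standard smooth bridge-position arguments rely on arbitrary ambient isotopies and compactness of smooth isotopy classes, neither of which is available in the symplectic category. My approach would be to work locally, using symplectic neighborhood theorems around each node and cusp to set up explicit Darboux-like models, and then push arcs of $B$ into the correct sectors by flowing along vector fields adapted to the toric moment map on $\cptwo$, verifying at each stage that the resulting isotopy preserves the symplecticity of $B$.

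Once $B$ is in symplectic bridge position, the preimages $X_i := f^{-1}(Z_i)$ assemble into a trisection $\cT_X$ of $X$: the bridge position hypothesis on $B$ ensures that each $X_i$ is a smooth $4$--dimensional handlebody of the correct genus and that the pairwise and triple intersections match up as required for a trisection. The toric trisection of $\cptwo$ already carries a Weinstein structure on each sector, with a plurisubharmonic function whose Liouville vector field is transverse to $\partial Z_i$ and points outward, constructed from the moment polytope. Pulling this Liouville structure back via $f$ and smoothing across the branch locus yields a Liouville vector field on each $X_i$ that is transverse to $\partial X_i$; the bridge position of $B$ ensures that the branch locus meets the sector boundaries transversely and flows inward along the Liouville flow, so that this smoothing can be carried out without creating critical points of index greater than $2$. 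The resulting function on $X_i$ is a Weinstein structure, and the three sectors together constitute the desired Weinstein trisection of $(X,\omega)$.
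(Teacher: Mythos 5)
Your overall strategy is the same as the paper's: realize $(X,\omega)$ as a symplectic singular branched cover $f\colon X\to\cptwo$ via Auroux--Katzarkov, symplectically isotope the braided branch curve into bridge position with respect to the toric genus one Weinstein trisection of $\cptwo$, and pull everything back. You have also correctly identified the bridge-position step as the main new difficulty. That said, the two technically demanding steps in your sketch either diverge from the paper in a way that is too vague to evaluate, or contain a genuine gap.

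On the bridge-position step: you propose to work locally with Darboux models near the nodes and cusps and then push arcs by flowing along vector fields adapted to the moment map. The paper's argument is quite different in flavor and crucially exploits the braided structure \emph{globally}: the key observation (Lemma~\ref{l:rescale}) is that the fiberwise rescaling $\Phi_c([z_0:z_1:z_2])=[z_0:z_1:cz_2]$ satisfies $\Phi_c^*\omega_{FS}=(1-c^2)\,dx_1\wedge dy_1+c^2\omega_{FS}$, so for a surface positively transverse to the fibers of the pencil, shrinking in the $z_2$-direction preserves symplecticity. This is combined with an isotopy moving all critical values of $\pi|_Q$ to the equator of $\cpone$ and an explicit fiber-preserving isotopy that pushes crossings and singular models into the interior of $Z_3$ (Lemmas~\ref{l:equator} and~\ref{l:bridgepos}), and finally Lemma~\ref{l:sympliso} to upgrade to an ambient symplectic isotopy. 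Your Darboux-neighborhood idea does not on its own explain how to control the global braiding of $Q\cap H_2$ so that the seams become trivial tangles, nor why symplecticity survives the large isotopies needed to do so; without an analogue of the rescaling computation, the claim that the isotopy ``preserves the symplecticity of $B$'' is an assertion, not an argument.

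On lifting the Weinstein structure, there is a concrete gap. You write that pulling back the Liouville structure and ``smoothing across the branch locus'' yields a Liouville vector field transverse to $\partial X_i$, and that ``the resulting function on $X_i$ is a Weinstein structure.'' But $f^*\omega_{FS}$ is \emph{degenerate} along the ramification locus (the kernel of $df$ is two-dimensional there), so there is no Liouville structure to pull back until the form itself is corrected. The paper uses Auroux's explicit construction $\omega_f=f^*\omega_{FS}+\varepsilon\,d\phi$, with $\phi$ a cut-off radial primitive supported near the ramification locus, and then shows that the corrected primitive $f^*\eta_\lambda+\varepsilon\phi$ restricts to a contact form on $\partial\widetilde Z_\lambda$, matching the contact branched-cover construction of Geiges. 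Even after this, one only has a \emph{Liouville} filling of $\#^{k_\lambda}(S^1\times S^2)$; to conclude it is \emph{Weinstein} (i.e. that the Liouville vector field is gradient-like for a Morse function with no limit cycles), the paper invokes the planarity of the tight contact structure on $\#^{k}(S^1\times S^2)$ together with Wendl's theorem and Cieliebak--Eliashberg. Your sketch silently assumes both the nondegeneracy of the pulled-back form and the Liouville-to-Weinstein upgrade, and these are precisely the points that require work. Finally, your Moser step should be phrased as producing a symplectomorphism $(X,\omega_f)\cong(X,k\omega)$ (with $k=\deg f$), which then transports the Weinstein trisection; ``agreeing with $f^*\omega_{FS}$ away from a neighborhood of $B$'' is not quite the right statement since one needs a global symplectomorphism to move the trisection.
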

		
	The proof of Theorem~\ref{thm:Main} relies on important results of Auroux \cite{Auroux} and Auroux-Katzarkov \cite{Auroux-Katzarkov} that every closed, symplectic 4--manifold admits a quasiholomorphic branched covering map $f\colon X \rightarrow \CP^2$.  Trisections are naturally suited for studying branched coverings of 4--manifolds (e.g. \cite{BCKM,RT-Multi,LM-Complex,CK}), and the basic strategy is to pull back the Weinstein trisection of $\CP^2$ via the singular branched covering map $f$.
	
	To carry this strategy out, we must first symplectically isotope the branch locus, which is possibly singular, into {\it  bridge position} with respect to a genus one Weinstein trisection of $\CP^2$.  Auroux and Katzarkov proved that the branch locus can be assumed to be braided with respect to the standard linear pencil~\cite{Auroux-Katzarkov}.  We show in Section~\ref{sec:bridge} that all such curves can be symplectically isotoped into bridge position.
	
	Putting this result (Theorem~\ref{thm:symplbridgepos}) together with the fact that any smooth symplectic submanifold (or with singularities modeled on a complex algebraic curve) in $\cptwo$ can be assumed to be braided (see Remark \ref{smoothisbraided}), we obtain the following theorem, which may shed some light on the study of the symplectic isotopy problem using bridge trisections.
		
	\begin{theorem}
		\label{thm:symp_isot}
		Every smooth (or algebraically singular) symplectic surface in $\cptwo$ is symplectically isotopic to one in bridge position with respect to a genus one Weinstein trisection of $\cptwo$.
	\end{theorem}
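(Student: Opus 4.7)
The strategy is dictated by the way the theorem is set up in the excerpt: it asserts that \emph{braided} surfaces can be moved into bridge position (this is Theorem~\ref{thm:symplbridgepos}, proved in Section~\ref{sec:bridge}), and it separately asserts that smooth or algebraically singular symplectic surfaces can be made braided (Remark~\ref{smoothisbraided}, relying on Auroux and Auroux--Katzarkov). So my plan is to prove Theorem~\ref{thm:symp_isot} as a clean concatenation of these two results, and the body of the argument is just to identify the correct pencil, and concatenate the two symplectic isotopies.

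First, let $\Sigma \subset \cptwo$ be a closed symplectic surface that is either smooth, or whose singularities are modeled on complex algebraic plane curve singularities. Fix the standard linear pencil of lines in $\cptwo$, which underlies the genus-one Weinstein trisection of $\cptwo$ in Example~\ref{ex:CP2-standard}. By Remark~\ref{smoothisbraided} (invoking the Auroux and Auroux--Katzarkov constructions of symplectic braidings, whose output is already a symplectic curve transverse to a generic fiber of the pencil away from finitely many tangencies and the algebraic singularities), there is a symplectic isotopy $\Sigma_t$, $t \in [0,1]$, with $\Sigma_0 = \Sigma$ and $\Sigma_1$ braided with respect to the standard pencil. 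In the smooth case the isotopy preserves smoothness, and in the algebraically singular case it preserves the topological type of each singularity, since the braiding procedure acts by local moves away from the singular points together with a symplectic straightening near each singularity that does not introduce new critical behavior.

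Next, apply Theorem~\ref{thm:symplbridgepos} to $\Sigma_1$. That theorem takes as input any (possibly singular) symplectic surface braided with respect to the standard linear pencil and produces a symplectic isotopy $\Sigma'_s$, $s \in [0,1]$, with $\Sigma'_0 = \Sigma_1$ and $\Sigma'_1$ in bridge position with respect to the genus-one Weinstein trisection of $\cptwo$. Concatenating the isotopies $\{\Sigma_t\}$ and $\{\Sigma'_s\}$ (and reparameterizing) yields a symplectic isotopy from $\Sigma$ to a surface in bridge position with respect to the genus-one Weinstein trisection of $\cptwo$, which is exactly the conclusion of Theorem~\ref{thm:symp_isot}.

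The only non-bookkeeping issue is to make sure the braiding step of Remark~\ref{smoothisbraided} really does produce input that Theorem~\ref{thm:symplbridgepos} accepts; that is, that the braided representative $\Sigma_1$ sits in the class of curves handled by Section~\ref{sec:bridge}. The hard step here is entirely in the background literature: the existence of a symplectic braiding for smooth or algebraically singular symplectic surfaces in $\cptwo$ (the Siebert--Tian/Auroux/Auroux--Katzarkov approximately holomorphic technology). Once we import that, the main theorem of the \emph{present} section, Theorem~\ref{thm:symplbridgepos}, does all of the genuinely new work, and Theorem~\ref{thm:symp_isot} follows formally. I do not expect any serious obstacles beyond verifying that the output of the Auroux--Katzarkov braiding is literally braided in the sense used by Theorem~\ref{thm:symplbridgepos}, which should be immediate from the definitions once both are written down.
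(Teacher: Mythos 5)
Your overall structure matches what the paper intends: Theorem~\ref{thm:symp_isot} is obtained by concatenating the braiding step (Remark~\ref{smoothisbraided}) with Theorem~\ref{thm:symplbridgepos}, exactly as you describe, and the paper never gives a separate formal proof beyond this observation. So your logical skeleton is right.

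However, you misattribute the braiding step, and if taken literally this would be a genuine gap. You repeatedly describe Remark~\ref{smoothisbraided} as ``invoking the Auroux and Auroux--Katzarkov constructions'' and later as ``the Siebert--Tian/Auroux/Auroux--Katzarkov approximately holomorphic technology.'' But the Auroux and Auroux--Katzarkov theorems cited in this paper (Theorem~\ref{thm:auroux} and Theorem~\ref{thm:AK}) are existence theorems for \emph{branched coverings} $f\colon X\to\cptwo$ with braided branch locus; they do \emph{not} assert that an arbitrary symplectic surface already sitting in $\cptwo$ can be symplectically isotoped to a braided one. A given smooth or algebraically singular symplectic curve in $\cptwo$ will generically fail to be the branch locus of any singular branched covering, so this machinery cannot be applied to it directly. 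What Remark~\ref{smoothisbraided} actually does is much more elementary and has nothing to do with approximately holomorphic geometry: choose a compatible almost complex structure $J$ for which $Q$ is $J$-holomorphic (using~\cite[Lemma 3.4]{GollaStarkston} in the singular case), take the $J$-holomorphic pencil of lines through $[0:0:1]$, perturb $Q$ generically so it is braided with respect to that pencil, and finally apply a symplectomorphism of $\cptwo$ carrying the $J$-holomorphic pencil to the standard linear pencil --- this symplectomorphism is symplectically isotopic to the identity by Gromov's theorem on $\mathrm{Symp}(\cptwo,\omega_{FS})$, so the whole procedure is a symplectic isotopy. You should replace the appeal to the approximately holomorphic technology with this argument; once you do, the concatenation with Theorem~\ref{thm:symplbridgepos} goes through as you wrote it.
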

		
	Once we know that we can symplectically isotope our branch locus into bridge position, to prove that every symplectic manifold has a Weinstein trisection, it remains to show that (a) pulling back the (topological) trisection of $\CP^2$ by $f$ gives a topological trisection of $X$, and (b) pulling back the Weinstein structure on each sector by $f$ gives a Weinstein structure compatible with the global symplectic form.  These tasks are carried out in Sections~\ref{sec:sing} and~\ref{sec:lifting}, respectively. 
	We conclude with a discussion of examples in Section~\ref{sec:Examples}.

	\subsection*{Acknowledgements}
	This project arose thanks to the support of an AIM SQuaRE grant.  The authors would like to express their gratitude to the American Institute of Mathematics for providing an ideal collaborative environment, as well as acknowledge Paul Melvin, Juanita Pinz\'on-Caicedo, and Alex Zupan for their input at the beginning of the project. JM was supported by NSF grant DMS-1933019. LS was supported by NSF grant DMS-1904074. The authors also thank the Max Planck Institute for Mathematics for its hospitality. Finally, the authors would like to thank the anonymous referee for thorough and important suggestions and comments.
	
	\section{Definitions, notions of equivalence, and open questions}
	\label{sec:ques}

In this section, we give formal definitions for the concepts from the introduction, discuss ways in which two Weinstein trisections could be considered equivalent, and present a handful of open questions that we hope will direct and encourage future development.

	A {\it trisection} $\TT$ of a 4--manifold $X$ is a decomposition $X = Z_1 \cup Z_2 \cup Z_3$ into three pieces such that, for each $\lambda\in\Z_3$,
	\begin{enumerate}
		\item Each $Z_{\lambda}$ is diffeomorphic to $\natural^{k_{\lambda}}(S^1 \times B^3)$, for some $k_{\lambda}$,
		\item Each double intersection $H_{\lambda} = Z_{\lambda-1} \cap Z_{\lambda}$ is a 3--dimensional, genus $g$ handlebody, and
		\item The triple intersection $\Sigma = Z_1 \cap Z_2 \cap Z_3$ is a closed, genus $g$ surface, with $\Sigma = \partial H_\lambda$.
	\end{enumerate}
	The surface $\Sigma$ is called the \emph{core} of the trisection, and the genus of the core is the \emph{genus} of the trisection.
	
	Gay and Kirby showed that every closed, oriented, smooth 4--manifold $X$ admits a trisection $\TT$ and that any two trisections $\TT$ and $\TT'$ of $X$ have a common stabilization~\cite{GK}.  Trisections of low genus have been classified~\cite{MZ-Genus,MSZ}, and trisections for many familiar 4--manifolds, including complex hyper-surfaces in $\CP^3$, have been described~\cite{LM-Complex}.
	
	Suppose that $X$ admits a symplectic structure $\omega$ and a trisection $\TT$.  We want to understand whether the symplectic form is compatible with the $1$--handlebody structure on each of the three sectors $Z_\lambda$. Weinstein's handle construction gives a notion of compatibility between a symplectic structure and a $1$--handlebody structure~\cite{Weinstein}.
	
	We now give a more analytic way to keep track of Weinstein's handle structure. A \emph{Weinstein structure} on a 4--manifold $W$ is a quadruple $(W,\omega, V, \phi)$ where $W$ is a smooth manifold with boundary, $\omega$ is the symplectic structure, $V$ is a Liouville vector field for $\omega$ which is outwardly transverse to the boundary, and $\phi\colon W\to\R$ is a Morse function such that $V$ is gradient-like for $\phi$. To say a vector field $V$ is Liouville for $\omega$ means that the $1$--form $\eta:=\iota_V\omega$ obtained by taking the interior product of $\omega$ by $V$ (i.e. $\eta(\cdot) = \omega(V,\cdot)$) satisfies $d\eta=\omega$. To say that $V$ is gradient-like for $\phi$ implies that the zeros of $V$ are the critical points of $\phi$ and $d\phi(V)>0$ whenever $V\neq 0$. The data of the function $\phi$ is less important than the Liouville vector field $V$ (because $V$ is what must be compatible with the symplectic form and different choices of $\phi$ for a fixed $V$ are equivalent), but the \emph{existence} of some function $\phi$ for which $V$ is gradient-like is important. The existence of $\phi$ is equivalent to asking that the zeros of $V$ are locally modeled on the gradient of a Morse function, and $V$ should have no limit cycles (no periodic orbits and no oriented cyclic loops of trajectories) \cite{Sullivan}. Therefore, we can focus solely on the Liouville vector field $V$, as long as we control its trajectories and zeros. A key feature of a Weinstein structure is that it induces a contact structure on the boundary of the handlebody. The contact form is given by the restriction of $\eta$ to the boundary.
	
	Each piece $Z_{\lambda}$ of the trisection is a 4--dimensional 1--handlebody and therefore admits a (sub-critical) Weinstein structure $(Z_{\lambda},\omega_{\lambda},V_{\lambda},\phi_{\lambda})$. Thus, one can ask whether such Weinstein structures on the $Z_\lambda$ can be chosen compatibly with the global symplectic form $\omega$ on $X$.
	
	\begin{definition}
		A {\it Weinstein trisection} of a symplectic 4--manifold $(X,\omega)$ consists of a trisection $\TT$, with induced decomposition $X = Z_1 \cup Z_2 \cup Z_3$, such that there exists a Weinstein structure $(Z_{\lambda},\omega|_{Z_{\lambda}},V_{\lambda},\phi_{\lambda})$ on each sector using the restriction of the symplectic form $\omega$.
		In this case, we say $\TT$ and $\omega$ are \emph{compatible}.
	\end{definition}
	
\begin{remark}
	Note that the sectors $Z_\lambda$ are manifolds with boundary \emph{and corners}, where the corner is along the trisection surface $\Sigma$. Typically, we define a Weinstein domain structure on a manifold with boundary, requiring the Liouville vector field to be outwardly transverse to the boundary. In this situation with corners, we take a sequence of approximating hypersurfaces approaching the boundary which smooth the corner (in the canonical manner), and require that the Liouville vector field is transverse to sufficiently nearby smoothings of the boundary.
\end{remark}
	
\begin{example}
\label{ex:CP2-standard}
	Consider $\cptwo$ with homogeneous coordinates $[z_1:z_2:z_3]$, and consider the subsets
	$$Z_\lambda = \{[z_1:z_2:z_3] \mid |z_\lambda|,|z_{\lambda+1}|\leq |z_{\lambda+2}| \}$$
	$$H_\lambda = \{[z_1:z_2:z_3] \mid |z_\lambda|\leq|z_{\lambda+1}|= |z_{\lambda+2}| \}.$$
	
	This decomposition yields a Weinstein trisection $\TT_0$ for $(\cptwo,\omega_{FS})$ first given in~\cite{LM-Complex}, where $\omega_{FS}$ denotes the Fubini-Study symplectic form. This trisection is compatible with the toric structure on $\cptwo$, and Figure~\ref{fig:toric} shows the decomposition in the image of the moment map.
	
	The corners of $\partial Z_\lambda$ can be smoothed by approximating $\partial Z_\lambda$ by $f_{\lambda,N}^{-1}(1)$ where
	$$f_{\lambda,N}(z_\lambda,z_{\lambda+1}):= \frac{1}{N}(|z_\lambda|^2+|z_{\lambda+1}|^2)+|z_\lambda|^{2N} +|z_{\lambda+1}|^{2N}$$
	for $N>>0$ (as in \cite{LC-Thom}).
\end{example}

\begin{figure}[h!]
	\centering
	\includegraphics[scale=.7]{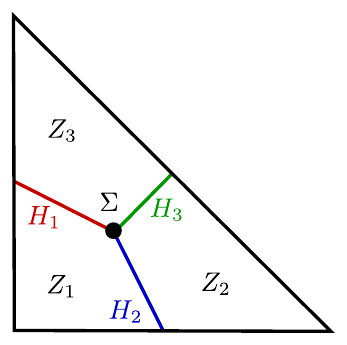}
	\caption{The standard genus one Weinstein trisection of $\cptwo$ coming from toric geometry.}
	\label{fig:toric}
\end{figure}

\begin{definition}
	Two Weinstein trisections $(X,\TT,\omega_0)$ and $(X,\TT,\omega_1)$ are \emph{isotopic} if there exists a family of Weinstein trisections $(X,\TT,\omega_t)$, with $t\in[0,1]$, such that $\omega_t$ is a family of symplectic forms interpolating between $\omega_0$ and $\omega_1$ with $[\omega_t]\in H^2(X;\R)$ independent of $t$.
\end{definition}

Given such a family of symplectic forms, Moser's trick implies that there exists an ambient isotopy $\Psi_t\colon X\to X$ such that $\Psi_0=id$ and $\Psi_t^*(\omega_t) = \omega_0$. Therefore, we could equivalently consider the family $(X,\TT_t, \omega_0)$ where the trisection $\TT_t = \Psi_t(\TT)$ varies through an isotopy and the symplectic form is fixed.

Even if two symplectic strutures are isotopic, and both are compatible with the same trisection, it is not clear whether the isotopy between them can remain compatible with the fixed trisection.
	
\begin{question}\label{ques:isot}
	Suppose that $(X,\TT,\omega_0)$ and $(X,\TT,\omega_1)$ are Weinstein trisections and that $\omega_0$ and $\omega_1$ are isotopic via a family $\omega_t$ such that $[\omega_t]\in H^2(X;\R)$ is independent of $t$.  Are $(X,\TT,\omega_0)$ and $(X,\TT,\omega_1)$ necessarily isotopic as Weinstein trisections?
\end{question}

Recall that the \emph{trisection genus} of a 4--manifold $X$ is the minimum value $g$ such that $X$ admits a trisection with core surface of genus $g$.

\begin{definition}
	The \emph{Weinstein trisection genus} of a symplectic 4--manifold $(X,\omega)$ is the minimum value $g$ such that $(X,\omega)$ admits a Weinstein trisection with core surface of genus $g$.
\end{definition}

\begin{question}\label{ques:genus}
	Is the Weinstein trisection genus of a symplectic 4--manifold $(X,\omega)$ always the same as the trisection genus of $X$?
\end{question}

Note that there are examples of symplectic 4--manifolds whose Weinstein trisection genus (for some symplectic form) equals their trisection genus.  These include complex hypersurfaces in $\CP^3$ and the elliptic surfaces $E(n)$; compare~\cite[Theorem~1.1]{LM-Complex} with Theorem~\ref{thm:pullbackWeiTri} and Example~\ref{ex:hyper} below.

Given a genus $g$ trisection $\TT$ for a 4--manifold $X$, there is a natural process, called \emph{stabilization}, that can be applied to produce a genus $g+1$ trisection $\TT'$ of $X$. Gay and Kirby proved that any two trisections of $X$ have a common stabilization. (See~\cite{MSZ} and~\cite{GK} for details.) If we start with a genus $g$ Weinstein trisection, we can perform this stabilization in a way that yields a genus $g+1$ Weinstein trisection.

\begin{proposition}
\label{prop:stab}
	Let $(X,\TT,\omega)$ be a genus $g$ Weinstein trisection, then any trisection stabilization can be performed such that the resulting $g+1$ trisection is compatible with $\omega$.
\end{proposition}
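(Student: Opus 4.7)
The plan is to localize the stabilization to a Darboux chart and produce an explicit Weinstein model for it there. A trisection stabilization is a local modification, supported in an arbitrarily small regular neighborhood of a properly embedded arc in one of the handlebodies $H_\lambda$. I would first arrange, by a small isotopy, that this arc lies inside a Darboux ball $U\subset X$ centered at a point $p\in\Sigma$. Inside $U$, Darboux's theorem puts $\omega$ into a standard form, and using a suitable symplectic neighborhood theorem for $\Sigma$ together with uniqueness of Liouville structures on a ball (up to compactly supported Liouville isotopy), I expect to identify $(\TT,\omega,V_1,V_2,V_3)|_U$ with the restriction of the standard Weinstein trisection of $(\cptwo,\omega_{FS})$ from Example~\ref{ex:CP2-standard} to a Darboux chart near a point of the Clifford torus.

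Once in this standard local picture, I would perform the stabilization by excising a smaller sub-ball $U'\Subset U$ and gluing in a model carrying the stabilized piece of the trisection. The new Liouville vector fields $V'_\lambda$ on the three new sectors would be built by attaching a standard Weinstein $1$--handle to $(Z_\lambda, V_\lambda)$ along a Legendrian arc in the smoothed boundary---with Legendrian isotopy class prescribed by the combinatorial data of the stabilization---and by modifying the other two $V_{\lambda\pm 1}$ via compactly supported Liouville isotopies so that all three pieces agree on the new model. The Morse and gradient-like conditions for $V'_\lambda$ at its new index-one zero follow from the standard theory of Weinstein handle attachment.

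The main obstacle is verifying the gluing condition along the new corner $\Sigma'$: each $V'_\lambda$ must remain outwardly transverse to smoothings of $\partial Z'_\lambda$, and the three vector fields must combine coherently into a Weinstein trisection. Having the explicit local model from Example~\ref{ex:CP2-standard} is essential here. I expect to verify the transversality directly using the family of smoothing hypersurfaces $f_{\lambda,N}^{-1}(1)$, whose outward conormals are arranged to agree with the radial Liouville vector fields of the toric model. Once this local verification is complete, the stabilized trisection $\TT'$ is automatically compatible with $\omega$.
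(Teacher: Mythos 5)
Your approach is more elaborate than the paper's and contains a genuine gap that its proof deliberately avoids. The first problem is conceptual: the definition of a Weinstein trisection imposes no coherence between the three Liouville vector fields $V_1,V_2,V_3$ --- each sector $Z_\lambda$ just needs its own Weinstein structure compatible with $\omega|_{Z_\lambda}$. So your requirement that ``the three vector fields must combine coherently into a Weinstein trisection'' along $\Sigma'$ is not part of the definition and leads you to try to standardize something that cannot be standardized. Your second, and central, gap is the claim that Darboux plus a symplectic neighborhood theorem plus ``uniqueness of Liouville structures on a ball'' lets you identify $(\TT,\omega,V_1,V_2,V_3)|_U$ with the local toric model of $\cptwo$ near the Clifford torus. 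Darboux normalizes $\omega$, and uniqueness of Liouville structures on a ball normalizes one $V_\lambda$ at a time, but there is no uniqueness theorem for a triple of independent Liouville vector fields on the three corner-sectors of a trisected ball; a priori none of them has anything to do with the radial fields of the toric picture. The later step of ``modifying the other two $V_{\lambda\pm1}$ via compactly supported Liouville isotopies so that all three pieces agree on the new model'' is both undefined (agree in what sense?) and unjustified.

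The paper's proof sidesteps all of this with two elementary observations. First, the stabilizing arc $a\subset H_{\lambda+1}$ is one-dimensional, hence automatically isotropic, so it has a standard symplectic neighborhood modeled on Weinstein's $1$--handle with core $a$; the Weinstein structure on $Z_\lambda$ therefore extends over any sufficiently small neighborhood $N$ of $a$. Second, for the other two sectors nothing is glued or replaced: one simply restricts the original $(Z_{\lambda\pm1},V_{\lambda\pm1})$ to $Z_{\lambda\pm1}\setminus N$, and the only thing to check is that $V_{\lambda\pm1}$ stays outwardly transverse to the new boundary, which holds once $N$ is chosen sufficiently ``shallow'' so that $\partial(Z_{\lambda\pm1}\setminus N)$ is $C^1$--close to $\partial Z_{\lambda\pm1}$; the paper writes down an explicit shape for $N$ in symplectic coordinates $(x_1,y_1,x_2,y_2)$ adapted to $H_{\lambda+1}$ and $a$. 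No local standardization of the $V_\lambda$, no gluing of handles along Legendrian data in the boundary, and no modification of $V_{\lambda\pm1}$ is required.
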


\begin{proof}
	The stabilization operation of a trisection can be understood in the following manner. Choose one sector $Z_\lambda$. Then $H_{\lambda-1}$ is the handlebody which is not contained in $\partial Z_\lambda$. Let $a$ be a properly-embedded, boundary-parallel arc in $H_{\lambda-1}$. Any stabilization of the trisection is obtained from some such choice of arc, by adding a regular neighborhood of $a$ to $Z_\lambda$ and deleting this regular neighborhood from $Z_{\lambda\pm 1}$. This has the effect of adding a $1$--handle to $Z_\lambda$. It does not change the topology of $Z_{\lambda\pm 1}$ because the arc $a$ lay in the boundary of each of these sectors, so deleting its neighborhood simply carves out a ``bite'' from their boundaries. The new pairwise intersections are still $3$-dimensional handlebodies when $a$ is boundary parallel.
	
	Now we consider how this construction interacts with the symplectic form. Any choice of arc $a$ is isotropic, so it has a standard symplectic neighborhood in $(X,\omega)$. This neighborhood can be identified with Weinstein's model for a symplectic $1$--handle attachment (identifying $a$ with the core of the $1$--handle). In particular, the Weinstein structure on $Z_\lambda$ extends over any sufficiently small neighborhood of $a$ compatibly with the fixed symplectic form $\omega$.

\begin{figure}[h!]
	\centering
	\includegraphics[scale=.6]{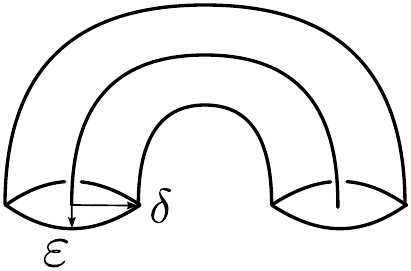}
	\caption{Dimensionally reduced neighborhood $N$ of the arc $a$ giving a Weinstein stabilization.}
	\label{fig:stab}
\end{figure}

	To ensure that the Weinstein structures on $Z_{\lambda\pm 1}$ remain Weinstein after deleting the neighborhood of $a$, we choose the neighborhood $N$ of $a$ so that the ``bite'' is sufficiently \emph{shallow}. We will simply restrict the original Weinstein structure on $Z_{\lambda\pm 1}$ to $Z_{\lambda\pm 1}\setminus N$. As long as the boundary $\partial (Z_{\lambda\pm 1}\setminus N)$ is sufficiently $C^1$--close to the boundary $\partial Z_{\lambda \pm 1}$, the Liouville vector field $V_{\lambda\pm 1}$ will still be outwardly transverse to the boundary. To choose $N$ to satisfy this, choose symplectic coordinates $(x_1,y_1,x_2,y_2)$ on a neighborhood of $a$ in $X$ such that $H_{\lambda-1} = \{y_2=0\}$ and $a=\{y_1=x_2=y_2=0\}$. Let
	$$N=\{(x_1,y_1,x_2,y_2) \mid y_1^2+x_2^2\leq h(|y_2|)^2, \; y_2\in [-\varepsilon,\varepsilon] \}$$
	where $h\colon [0,\varepsilon]\to [0,\delta]$ is a monotonically decreasing function. See Figure~\ref{fig:stab} for a dimensionally reduced (there is only one dimension representing both the $y_1$ and $x_2$ directions) picture for the shape of $N$. Choosing $\varepsilon/\delta$ sufficiently small ensures that $N$ is sufficiently shallow so the new boundary is $C^1$--close to the old boundary. Note that because of the absolute value in the argument for $h$ in the definition of $N$, $N$ has boundary and corners, but this is expected in a trisection, and we smooth corners as discussed above.
\end{proof}

Even if the answer to Question~\ref{ques:isot} is negative, it could be possible that after stabilization, the answer becomes positive.

\begin{question} \label{ques:stabisot}
	Suppose that $(X,\TT,\omega_0)$ and $(X,\TT,\omega_1)$ are Weinstein trisections and that $\omega_0$ and $\omega_1$ are isotopic via a family $\omega_t$ such that $[\omega_t]\in H^2(X;\R)$ is independent of $t$. Does there exist a common stabilization $\TT'$ such that $(X,\TT',\omega_0)$ and $(X,\TT',\omega_1)$ are isotopic through Weinstein trisections?
\end{question}

More generally, we can ask whether Gay-Kirby's common stabilization theorem holds in the Weinstein setting.
	
\begin{question}\label{ques:stab}
	Suppose $(X,\TT,\omega)$ and $(X,\TT',\omega')$ are Weinstein trisections such that $\omega=\omega'$ (or, alternatively, such that $\omega$ and $\omega'$ are isotopic).  Is there a Weinstein trisection $(X,\TT'',\omega)$ such that $\TT''$ is a stabilization of both $\TT$ and $\TT'$?
\end{question}

A long-standing point of interest in symplectic geometry is the question of how to compare different symplectic structures on a fixed 4--manifold. The ability to stabilize Weinstein trisection provides a possible avenue to address this question in the following way.  Given two symplectic structures $\omega_1$ and $\omega_2$ on $X$, we can find Weinstein trisections $\TT_1$ and $\TT_2$ for $X$ that are compatible with $\omega_1$ and $\omega_2$, respectively.  By stabilizing, we obtain Weinstein trisections $\TT_1'$ and $\TT_2'$ that are isotopic as trisections, but may or may not be isotopic via a family of Weinstein trisections.  Thus, this allows us to compare the symplectic structures $\omega_1$ and $\omega_2$ on $X$ relative to a fixed trisection $\TT'$ of $X$.  
To this end, Questions~\ref{ques:isot},~\ref{ques:stabisot}, and~\ref{ques:stab} are relevant.

	\section{Singular bridge trisections and branched coverings}
	\label{sec:sing}
	
	In this section, we discuss singular surfaces in $4$--manifolds and their branched coverings, along with their compatibility with symplectic structures and trisections.

	\subsection{Singular bridge trisections}
	\label{subsec:sing-bt}
	\ 
	
	Let $X$ be a closed, orientable, smooth 4--manifold.  
	\begin{definition} We say $\cS\subset X$ is a \emph{singular surface} provided that
	\begin{enumerate}
		\item $\cS$ is the image of a smooth immersion away from finitely many critical points;
		\item all multiple points of the immersion are transverse double points; and
		\item in a small neighborhood of each critical value, $\cS$ is diffeomorphic to a cone on a knot in $S^3$.
	\end{enumerate}
	\end{definition}

\begin{remark}
\label{rmk:cone}
	We clarify what we mean by a \emph{cone}. Under an identification of the neighborhood of the singular point with $B^4$ that sends the singular point to the origin, the intersection of the singular surface with concentric spheres $S^3_r$ is a smooth knot. These knots will all be isotopic if we identify the concentric $3$--spheres by rescaling.
	Note that cones are not unique, depending on the way in which concentric cross-sectional knots are related by isotopy; there is generally \emph{not} a diffeomorphism of $B^4$ relating these different ways of coning along the same knot isotopy class.
	
	Results in this paper that depend only on smooth-topological considerations, such as the results of Section~\ref{sec:pull-back} are valid regardless of how the cone is formed; the main relevant fact will be that the branched covers of the concentric 3--spheres along their cross-sectional slices are 3--spheres.  In particular, the linear cone will always suffice in this setting.
	
	For symplectic surfaces, we will constrain the cones much more precisely. We will only look at symplectic surfaces where the singular points have complex models (except potentially negative double points coming from the immersion). In particular, each branch of the singularity will have a well-defined limiting tangent space. (See Remark~\ref{rmk:tangent}.)

	Note that although the linear cone on a knot may serve perfectly well as the branch locus for a branched covering in the smooth trisection setting, it does not suffice for the symplectic setting. In fact, for branched coverings, when working in the symplectic setting, we will only need to consider critical points that are the cone on a right-handed trefoil, and we will fix an explicit complex model for this cone. We call the singularity with this model a (simple) \emph{cusp}.  See Example~\ref{ex:cone-br}.
	

\end{remark}

	For a singular surface to be \emph{symplectic}, we want its tangent spaces to be symplectic subspaces where they are defined, and at singular points, we want specific local models.  Note, that throughout this article, a symplectic \emph{surface} will have real dimension two; we will refer to the ambient manifold as a symplectic 4--manifold.
	
	\begin{definition}
		A \emph{symplectic singular surface} in a symplectic $4$--manifold $(X,\omega)$ is the image $Q=g(S)$ of a map $g\colon S \to X$, with a finite set of points $P=\{p_1,\cdots, p_n\}\subset S$, where $S$ is an abstract surface, satisfying the following properties. 
			\begin{enumerate}
				\item $g|_{S\setminus P}$ is a smooth immersion.
				\item $\omega$ is positive on $dg_p(T_pS)$ for $p\in S\setminus P$.
				\item For each $i\in \{1,\cdots, n\}$, there exists a neighborhood $B_i$ of $g(p_i)$ in $X$, and a diffeomorphism $\Psi\colon B_i \to U_i\subset (\C^2,\omega_{std})$ such that if $N_i$ is the connected component of $g^{-1}(B_i)$ in $S$ containing~$p_i$, then $\Psi\circ g(N_i)\subset U_i\subset \C^2$ is cut out by a complex polynomial. (Each branch of the singular surface is smoothly modeled on a complex algebraic curve.)
				\item For each $p\in P$, let $T_p$ be the limiting tangent space $\displaystyle T_p=\lim_{x\to p} dg_{x}(T_{x}S)$; then, $\omega$ is positive on~$T_p$.
			\end{enumerate}
	\end{definition}
	
	\begin{remark}\label{rmk:tangent}
		Condition (4) makes sense as a consequence of condition (3), because each branch of a complex plane curve singularity has a well-defined limiting tangent space. This follows from the existence of the Puiseux parametrization of the branch: namely there exists a smooth change of coordinates (where $x$ and $y$ will be viewed as complex coordinates each containing two real parameters), such that the curve is locally parameterized as $x(t)=t^n$, $y(t) = \sum_{r=n}^\infty a_r t^r$. Since 
		$$\frac{dy}{dx}(t) = a_n+\sum_{r=n+1}^\infty \frac{ra_r}{n}t^{r-n}, $$
		we have that the tangent spaces near the critical point limit to the tangent line $y=a_n x$. See~\cite[Chapter 2]{Wall} for more details.
	\end{remark}

	A 1--parameter family $Q_t=g_t(S)$ of symplectic singular surfaces in $(X,\omega)$ such that the links of the singular points remain smoothly isotopic knots/links throughout the family, will be called a \emph{topologically equisingular symplectic isotopy}. 
	
	Note that the analytic type of the singularities may change. In particular, if $q_0\in Q_0$ and $q_1\in Q_1$ are corresponding singularities in such a family, there need \emph{not} exist neighborhoods $q_i\in U_i\subset X$ and a symplectomorphism $\Phi\colon U_0\to U_1$ such that $\Phi(Q_0\cap U_0) = Q_1\cap U_1$. Therefore, there need not be an ambient symplectic isotopy inducing this isotopy of surfaces. In other words, there may not be a family of symplectomorphisms $\Psi_t\colon(X,\omega)\to (X,\omega)$ such that $\Psi_t(Q_0)=Q_t$. However, the next lemma shows that the local singularities are the only obstruction to finding such an ambient symplectic isotopy.
	
	\begin{lemma} \label{l:sympliso}
		Let $Q_t=g_t(S)$ be a topologically equisingular symplectic isotopy, for $t\in [0,1]$.
		Then there exist small neighborhoods $\{U_t^i\}_{i=1}^N$ of the singular points of $Q_t$ and a family of symplectomorphisms $\Psi_t\colon(X,\omega)\to (X,\omega)$ such that
		$$\Psi_t(Q_0)\setminus \left( \bigcup_{i=1}^N U_t^i \right) = Q_t\setminus \left( \bigcup_{i=1}^N U_t^i \right).$$
	\end{lemma}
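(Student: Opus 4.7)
The plan is to build $\Psi_t$ as the Hamiltonian flow of a time-dependent function on $X$ that is localized near the regular part of $Q_t$. Outside the singular neighborhoods the family $g_t$ restricts to a smooth isotopy of embedded compact symplectic surfaces with boundary, so the horizontal vector field $w_t = \partial_s g_s|_{s=t} \circ g_t^{-1}$ is a well-defined smooth vector field along $Q_t \setminus \bigcup_i U_t^i$ with values in $TX$. After absorbing the component of $w_t$ tangent to $Q_t$ by a time-dependent reparametrization $\sigma_t\colon S \to S$ (obtained by integrating a first-order ODE on the open subset of $S$ where $g_t$ is an immersion), we may assume $w_t$ lies in the $\omega$-orthogonal complement of $TQ_t$; this absorption does not change the image $Q_t = g_t(S)$, which is all that the statement cares about.

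Next I would select smoothly varying nested neighborhoods $U_t^{i,-} \subsetneq U_t^i \subsetneq U_t^{i,+}$ of the singular points and apply a parametric version of Weinstein's symplectic neighborhood theorem to the smooth family of compact symplectic surfaces with boundary $Q_t \setminus \overline{U_t^{i,-}}$, producing a smoothly varying family of symplectic tubular neighborhoods $\nu_t \subset X$. In the resulting symplectic normal coordinates $(x,y)$ on $\nu_t$, with $Q_t$ appearing as the zero section of its symplectic normal bundle, I would write down a smooth family of Hamiltonians $H_t$ that are linear in the normal fibers $y$, with coefficients on $x$ chosen so that the Hamiltonian vector field at the zero section realizes $w_t$. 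Then $H_t$ vanishes along $Q_t$ and $X_{H_t}|_{Q_t \cap \nu_t} = w_t$. I would cut $H_t$ off by a smoothly varying bump function $\chi_t \colon X \to [0,1]$ that equals $1$ on an open neighborhood of $Q_t \setminus \overline{U_t^i}$ and is supported in $\nu_t \setminus \overline{U_t^{i,-}}$. Because $H_t \equiv 0$ on $Q_t$ and $\chi_t \equiv 1$ in an open neighborhood of $Q_t \setminus \overline{U_t^i}$, the Leibniz rule $X_{\chi_t H_t} = \chi_t X_{H_t} + H_t X_{\chi_t}$ gives $X_{\chi_t H_t} = w_t$ on $Q_t \setminus \overline{U_t^i}$.

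The Hamiltonian flow $\Psi_t$ of $\chi_t H_t$ is then a symplectic isotopy of $(X,\omega)$ with $\Psi_0 = \mathrm{id}$. For any $p \in S$ whose forward trajectory $g_s(p)$ remains in $Q_s \setminus \overline{U_s^i}$ for all $s \in [0,t]$, the two curves $s \mapsto \Psi_s(g_0(p))$ and $s \mapsto g_s(p)$ satisfy the same ODE $\dot\gamma(s) = X_{\chi_s H_s}(\gamma(s))$ with the same initial condition, so by uniqueness they agree at time $t$; in particular, $\Psi_t$ maps the corresponding pieces of $Q_0$ bijectively onto the corresponding pieces of $Q_t$, yielding the set equality $\Psi_t(Q_0) \setminus \bigcup_i U_t^i = Q_t \setminus \bigcup_i U_t^i$ after suitably organizing the nested $U_t^{i,\pm}$ so that trajectories entering or leaving the singular neighborhoods are accounted for. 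The main technical obstacle is implementing this construction parametrically: the Weinstein tubular neighborhood $\nu_t$, the Hamiltonian $H_t$, and the cutoff $\chi_t$ must all vary smoothly in $t$ even though the analytic type of the singularities may jump, and the nested neighborhoods $U_t^{i,\pm}$ must be chosen to interpolate continuously with the moving singular loci; the freedom to allow $\Psi_t$ to behave arbitrarily inside the $U_t^i$ is precisely what makes this possible, and is what distinguishes the lemma from the nonexistent ambient symplectic isotopy sending $Q_0$ to $Q_t$ globally.
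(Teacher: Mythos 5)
Your approach is genuinely different from the paper's: you build $\Psi_t$ directly as the flow of a cut-off, fiber-linear Hamiltonian in a parametric Weinstein tubular neighborhood of the regular part, whereas the paper first builds symplectomorphisms $\psi_t\colon N_0\to N_t$ of a full neighborhood of $Q_t$ (standard symplectic neighborhood on the embedded part, Darboux charts at the singularities, glued by extending a contactomorphism of $(S^3,\xi_{std})$ to a symplectomorphism of $(B^4,\omega_{std})$), then extends $\psi_t$ arbitrarily to a diffeomorphism $F_t$ of $X$, and finally runs a Moser argument with a carefully constrained primitive $\alpha_t$ (vanishing at the singular points and annihilating $TQ_0^{\perp_\omega}$ along the smooth locus) so that the Moser correction preserves $Q_0$. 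Your direct Hamiltonian construction is cleaner in that it avoids the $F_t$-then-correct two-step; the price you pay is that you never construct anything over the singular neighborhoods, and that is where the gap sits.

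The gap is precisely the boundary bookkeeping that you flag but do not resolve. Your ODE-uniqueness argument gives $\Psi_t(\tilde g_0(p)) = \tilde g_t(p)$ only for those $p\in S$ whose entire trajectory $\tilde g_s(p)$, $s\in[0,t]$, stays in the region where $\chi_s\equiv 1$, i.e. avoids $\overline{U_s^i}$ for all intermediate $s$. But the desired equality $\Psi_t(Q_0)\setminus\bigcup_i U_t^i = Q_t\setminus\bigcup_i U_t^i$ is a statement about membership at time $t$ only. A point $q=\tilde g_t(p)\notin\bigcup_i U_t^i$ may well have $\tilde g_s(p)\in U_s^i$ for some $s<t$ (this is unavoidable for $p$ close to a singular point of $S$), and for such $p$ the trajectory enters the cut-off annulus where $X_{\chi_s H_s}\neq w_s$; you then lose control of $\Psi_t(\tilde g_0(p))$ and have no other candidate preimage for $q$. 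Symmetrically, a point $q_0\in Q_0$ inside $U_0^i$ might be carried by $\Psi_t$ to a point outside $\bigcup_i U_t^i$ that is not on $Q_t$, violating the inclusion $\Psi_t(Q_0)\setminus\bigcup_i U_t^i\subseteq Q_t\setminus\bigcup_i U_t^i$. Saying that the freedom inside the $U_t^i$ makes this work is not enough: arbitrary behavior inside $U_t^i$ is exactly what creates the problem. To close the gap you would need to extend the Hamiltonian (or compose with another symplectic isotopy) so that $\Psi_t$ actually carries a Darboux ball containing $U_0^i$ onto one containing $U_t^i$, which is in effect what the paper does by extending the contactomorphism on the boundary $3$--sphere to a symplectomorphism of the ball.
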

	
	\begin{proof}
		The case when $Q_t$ is a smooth symplectic submanifold is proven in \cite[Proposition 4]{AurouxIsotopy} using a Moser argument (in this case there are no neighborhoods $U_t^i$ of singular points). We review this argument inserting the necessary modifications for the singular case.
		
		The first step is to find neighborhoods $N_t$ of $Q_t$ and symplectomorphisms $\psi_t\colon N_0 \to N_t$ such that $\psi_t(Q_0)$ agrees with $Q_t$ outside of the neighborhoods $U_t^i$. Outside of any small neighborhood of the singular points, where $Q_t$ is smooth and symplectic, it has a standard symplectic neighborhood (because the symplectic normal bundle is uniquely determined by the smooth normal bundle). This defines $N_t$ and $\psi_t$ away from the singular points. Each sufficiently small neighborhood of a singular point has a Darboux chart (though the precise model for the intersection of this Darboux chart with $Q_t$ may vary). These standard pieces can be glued together compatibly along a contactomorphism of $(S^3,\xi_{std})$ along the boundary of the Darboux chart. If the topological singularity type is fixed for all $t$, the transverse link of the singularity is the same up to transverse isotopy for all $t$. Since any transverse isotopy can be realized by a contact isotopy, we can assume that the gluing aligns the smooth piece of $Q_t$ with the singular part of $Q_t$. Note that every contactomorphism of $(S^3,\xi_{std})$ extends to a symplectomorphism of $(B^4,\omega_{std})$ so while we may not have control over the precise image of $Q_t$ near its singular points, we do know that we can extend the standard neighborhood on the smooth part to a symplectomorphism $\psi_t\colon N_0\to N_t$ such that $\psi_t(Q_0)\setminus \left(\cup_i U_t^i\right) = Q_t\setminus \left(\cup_i U_t^i\right)$.
		
		Next, we extend $\psi_t$ to a diffeomorphism $F_t\colon X\to X$ such that $F_0$ is the identity. Because this extension was arbitrary, we do not expect $\omega_t:=F_t^*(\omega)$ to equal $\omega$ outside of the neighborhood $N_0$ for $t\neq 0$, but we will use Moser's trick to construct another family of diffeomorphisms $\tau_t\colon X\to X$ such that $\tau_t^*(F_t^*\omega) = \omega$, such that $\tau_t(F_t(Q_0))=F_t(Q_0)$. Note that $F_t(Q_0)$ agrees by construction with $Q_t$ outside of the neighborhoods $U_t^i$.
		
		In Moser's trick, $\tau_t$ is defined as the integral flow of a vector field $V_t$ such that $d(\iota_{V_t}\omega_t)=-\frac{d\omega_t}{dt}$. Moser's method verifies that this condition on $V_t$ is sufficient to ensure that $\tau_t^*\omega_t=\omega$, but we need to know that there exists such a $V_t$ whose flow preserves $Q_0$. By non-degeneracy, finding $V_t$ is equivalent to finding $\alpha_t=\iota_{V_t}\omega_t$ such that $d\alpha_t=-\frac{d\omega_t}{dt}$. We want $V_t$ to vanish at the singularities of $Q_0$ and to be tangent to $Q_0$ at smooth points. Since $\omega_t=\omega$ in a neighborhood of $Q_0$ (because $F_t$ agrees with the symplectomorphism $\psi_t$ here), these conditions are equivalent to asking that $\alpha_t$ vanishes at the singularities of $Q_0$ and $T_pQ_0^{\perp_\omega}\subset \ker(\alpha_t)_p$ at each smooth point $p\in Q_0$. We know that $-\frac{d\omega_t}{dt}$ is zero in cohomology so it has some family of primitives $\beta_t$. We will modify $\beta_t$ to a $1$--form $\alpha_t$, differing from $\beta_t$ by an exact form and satisfying the desired tangency and vanishing criteria along $Q_0$.
		
		First, choose a $1$--form $\delta_t$ on $Q_0$ such that $\delta_t=0$ in an $2\varepsilon$--neighborhood of the singularities such that $i^*[\beta_t]=[\delta_t]\in H^1(D_0;\R)$ where $i\colon Q_0\to N_0$ is the inclusion. 
		Let $\pi\colon N_0\setminus \left(\cup_i B^i \right) \to Q_0\setminus \left(\cup_iU^i_0 \right)$ be the symplectic orthogonal projection of the neighborhood of $Q_0$ to $Q_0$ outside a $\varepsilon$--neighborhood of the singularities (we can extend $\pi$ arbitrarily near the singularities). 
		In the $\varepsilon$--neighborhoods of the singularities, let $\gamma_t=0$ and elsewhere let $\gamma_t=\pi^*\delta_t$. 
		Then $\gamma_t$ are smooth $1$--forms, $i^*[\gamma_t]=[\delta_t]=i^*[\beta_t]$ so $[\gamma_t]=[\beta_t]$ since $i$ is a homotopy equivalence. Additionally, $\gamma_t$ satisfies $(T_xC_0)^{\perp_\omega}\subseteq \ker\gamma_t$ at all points and $\gamma_t=0$ at the singularities of $Q_0$. 
		Now write $\gamma_t-\beta_t=df_t$ for functions $f_t:U_0\to \R$, and choose arbitrary extensions $\bar{f}_t\colon M\to \R$. 
		Then $\alpha_t=\beta_t+d\bar{f}_t$ agrees with $\gamma_t$ in $U_0$ (so $\alpha_t$ has the required vanishing and kernel conditions along $C_0$) and it satisfies $d\alpha_t = d\beta_t = -\frac{d\omega_t}{dt}$ as required.
	\end{proof}
	
	Next we explain how singular surfaces can interact well with a trisection.  For this, we require a 4--dimensional analog of a trivial tangle.

Let $Y=\#^k(S^1\times S^2)$ and let $Z = \natural^k(S^1\times B^3)$, so $Y=\partial Z$.
Let $L = L_1\sqcup\cdots\sqcup L_n$ be a link in $Y$ given as the split union of non-split links $L_i$ such that each $L_i$ is contained in a 3--ball in $Y$.
Let $\cD\subset Z$ be a collection of disks with $\partial\cD = L$.

We call $\cD$ a \emph{singular disk-tangle} for $L$ if
\begin{enumerate}
	\item Each component of $\cD$ with unknotted boundary is a smooth, properly embedded, boundary parallel disk;
	\item For each component $D$ of $\cD$ with knotted boundary $K$, there is a 4--ball $B\subset Z$ such that $B\cap S^3$ is a 3--ball containing $K$ and $D$ is a cone $K$ in $B$; and
	\item the components of $\cD$ are disjoint when their boundaries are split and intersect transversely -- away from all cone points -- otherwise.
\end{enumerate}
For example, if $L$ were the split union of a the torus link $T(9,6)$, whose components are three trefoils, and the Hopf link $T(2,1)$, then $\cD$ would consist of three cones on trefoils, which intersect pairwise transversely in six points, split union two smoothly embedded disks, which intersect transversely in a single point.
Note that this is a generalization of the notion of a trivial disk-tangle appearing elsewhere in the literature~\cite{CK,LM-Complex,MZ-Bridge1,MZ-Bridge2}.
		
\begin{definition}
	Let $X$ be a 4--manifold equipped with a trisection $\TT$, and let $\cS\subset X$ be a singular surface.  We say that $\cS$ is in \emph{bridge trisected position} with respect to $\TT$ if, for each $\lambda\in\Z_3$,
	\begin{enumerate}
		\item $\cT_\lambda = H_\lambda\cap\cS$ is a trivial tangle, and
		\item $\cD_\lambda = Z_\lambda\cap \cS$ is a singular disk-tangle for the link $K_\lambda = \cT_\lambda\cup\cT_{\lambda+1}$.
	\end{enumerate}
	We refer to the $\cD_\lambda$ as \emph{patches} and to the $\cT_\lambda$ as \emph{seams}.
	The decomposition
		$$(X,\cS) = (Z_1,\cD_1)\cup(Z_2,\cD_2)\cup(Z_3,\cD_3)$$
		is called a \emph{singular bridge trisection}.
\end{definition}
	
	A straight-forward adaptation of the techniques of~\cite{MZ-Bridge2} to the setting of singular surfaces reveals that any singular surface can be put in bridge trisected position with respect to any trisection. (The main idea: Treat the singular points as minima and follow the Morse-theoretic construction of the bridge trisected position.)
	In Section~\ref{sec:bridge}, we prove a stronger result for symplectic braided surfaces, showing that they can be \emph{symplectically} isotoped to lie in bridge trisected position in $\CP^2$.

	\subsection{Singular branched coverings}
	\label{subsec:sing-br}
	\ 
	
	We refer the reader to~\cite{Zuddas} for a nice exposition of branched coverings. Most branched coverings considered here will be \emph{simple} (meaning they are modeled by an involution near non-singular branch points), since these are the coverings produced by Auroux and Katzarkov~\cite{Auroux,Auroux-Katzarkov}. However, we will also consider some cyclic branched coverings of higher order when studying particularly nice examples (cf. Definition~\ref{def:sing-br} and Section~\ref{sec:Examples}).  Moreover, some results, such as Theorem~\ref{prop:branch-tri}, hold for more general branched coverings.  We begin with the following general definition.
	
	\begin{definition}
		\label{def:sing-br}
		Let $X$ be a 4--manifold and let $\cS$ in $X$ be a singular surface.  A proper map $f\colon \widetilde X\to X$ is called a \emph{singular branched covering} of $X$ along $\cS$ if the following two conditions hold:
		\begin{enumerate}
			\item Away from the singular points in $X$, $f$ is a branched covering map.
			\item In a 4--ball neighborhood $B\subset X$ of a singular point, $f^{-1}(B) = \widetilde B_1\sqcup\cdots\sqcup\widetilde B_n$ is the disjoint union of 4--balls such that $f\vert_{\widetilde B_i}\colon \widetilde B_i\to B$ is either a diffeomorphism or the cone on a branched covering map from $S^3$ to itself.  
		\end{enumerate}
	\end{definition}
	
	Note that the existence of a singular branched covering of $X$ branched along a singular surface $\cS$ restricts the singularities of $\cS$ significantly.  In particular, the link of any singularity must be a knot or link that admits a branched covering to the disjoint union of copies of $S^3$.  On the other hand, there are many examples of such links, including 2--bridge links.  We will now discuss explicitly the most important cases for the present work.
	
	\begin{examples}
		\label{ex:cone-br}
		We describe some important examples of singular branched coverings in coordinates.  Since we will mainly consider \emph{simple} singular branched coverings in this paper, the following three examples exhaust the relevant local models describing our singular branched covering maps.
	\begin{enumerate}
		\item \textbf{(simple branch point)} First, consider the standard branching model, described by the map $f\colon \C^2\to\C^2$ given in complex coordinates by $f(x,y)=(x^2,y)$.
		The 3--sphere $S_\varepsilon = \{|z|^2+|w|^2=\varepsilon\}$
		in the co-domain has preimage $f^{-1}(S_\varepsilon)=\{|x|^4+|y|^2=\varepsilon\}$ diffeomorphic to a 3--sphere, which can be seen by changing coordinates by the scaling map
		$$x \mapsto \frac{x}{\sqrt{|x|}}.$$
		The singular points of $f$ occur when $x=0$. The intersection of $\{x=0\}$ with $f^{-1}(S_\varepsilon)$ is an unknot.  Thus, $f\vert_{S_\varepsilon}$ represents the 2--fold cyclic covering of $S^3$ over itself, branched along the unknot, and $f$ can be thought of as the cone of the map $f_{S_\varepsilon}$, though in this case, the cone can be treated as a smooth, trivial disk bounded by the unknot.
		
		\item \textbf{(cusp)} Next, we describe the model for the cusp, which is given in complex coordinates by the function
		$$f(x,y) = (x^3-xy,y).$$
		The critical points of $f$ occur where the derivative
		$$df_{(x,y)} = \left[ \begin{array}{cc} 3x^2-y & -x\\ 0 & 1 \end{array} \right]$$
		fails to have full rank.

		This occurs when $3x^2-y=0$. Therefore, the critical set of this map is $\{y=3x^2\}$. The critical values of $f$ are thus the image of this set, namely the points $(-2x^3, 3x^2)$ for $x\in\C$. With coordinates $(z,w)$ on the codomain, the critical values are the zero locus of the polynomial $27z^2-4w^3$. Thus the branch locus is parameterized as $\{(-2c^3,3c^2) \mid c\in \C \}$ and has a singular point at $(0,0)$. Observe that for each value $c$, $f^{-1}(-2c^3, 3c^2) = \{(c,3c^2),(-2c,3c^2)\}$.  The points $(c,3c^2)$ are critical points and have multiplicity two. On the other hand, the points $(-2c,3c^2)$ (the locus $y=\frac{3}{4}x^2$) are not critical points when $c\neq 0$, but they do project to critical values in the branch locus. This occurs because the branched covering is irregular.
		
		To view this as a cone over a branched covering of $S^3$, consider again the 3--sphere $S_{\varepsilon}$, and let $\widetilde S_\varepsilon = f^{-1}(S_\varepsilon) =\{|x^3-xy|^2 + |y|^2 = \varepsilon\}$. The singular curve $\{27z^2-4w^3=0\}$ intersects $S_{\varepsilon}$ along the right-handed trefoil $K$, and $f$ is the cone of the restriction $f\vert_{\widetilde S_\varepsilon}\colon \widetilde S_\varepsilon \to S_\varepsilon$.
		
		In a neighborhood of any point on $K\subset S_\varepsilon$, $f\vert_{\widetilde S_\varepsilon}$ is given by the standard branching model described above.  It follows that $f\vert_{\widetilde S_\varepsilon}$ is a 3--fold, simple covering of $S_\varepsilon$, branched along $K$.  Such a covering is determined by a map $\rho\colon\pi_1(S_\varepsilon\setminus\nu(K))\twoheadrightarrow \mathscr S_3$ sending meridians of $K$ to transpositions.  (See~\cite{Zuddas} for definitions and details.)  Since such surjections are unique up to conjugation for the trefoil, it follows that $f\vert_{\widetilde S_\varepsilon}$ is the (only) irregular 3--fold branched cover over the trefoil.  Thus $f^{-1}(S_\varepsilon)\cong S^3$, as desired; the cusp map is the cone on this branched covering.
		
		\item \textbf{(node)} Finally, consider the quotient map $f\colon \C^2_1\sqcup\C^2_2 \to \C^2$ defined by $f(x_1,y_1) = (x_1^2,y_1)$ for $(x_1,y_1)\in \C^2_1$ and $f(x_2,y_2)=(x_2,y_2^2)$ for $(x_2,y_2)\in \C^2_2$. This can also be thought of as the quotient map by the equivalence relation where $(x_1,y_1)\sim (-x_1,y_1)$, $(x_2,y_2)\sim(x_2,-y_2)$, and $(x_1,y_1)\sim (x_2,y_2)$ if $x_1^2 = x_2$ and $y_1=y_2^2$ for any $(x_1,y_1)\in \C^2_1$ and $(x_2,y_2)\in \C^2_2$.  Over points $(z,w)\in\C^2$, this map is four-to-one when $z\not=0$ and $w\not=0$, is three-to-one when either $z=0$ or $w=0$ (but not both), and is two-to-one when $(z,w) = (0,0)$.
		
		We can view this map as the disjoint union of cones of branched coverings from $S^3$ to $S^3$ as follows.  For $i=1,2$, the restriction $f_i$ of $f$ to $\C_i$ is simply the map from Example~(1) above (up to swapping the coordinates in $\C^2_2$).  Each of these is a cone, so $f$ is the disjoint union of cones on these maps 
		
		Finally, note that in this case the link $L_\varepsilon = S_\varepsilon\cap\{zw=0\}$ is simply the Hopf link, the lift $f^{-1}(S_\varepsilon)$ is the disjoint union of two 3--spheres, and the restriction $f\vert_{f^{-1}(S_\varepsilon)}$ is a 4--fold covering of $S^3$, branched along the Hopf link. The preimage of the Hopf link is a pair of Hopf links in the two covering 3--spheres, and each component of the link downstairs is double covered by one component upstairs and single covered by one component upstairs. This covering is determined by the map $\rho\colon \pi_1(S_\varepsilon\setminus\nu(L_\varepsilon))\to \mathscr S_4$ that sends the meridians of the Hopf link to commuting transpositions, say $(1\,2)$ and $(3\,4)$.  Again, this map is unique up to conjugation, but this time it is not surjective.  The non-surjectivity of $\rho$ corresponds to the fact that the branched covering is disconnected.
		
		Note: Reversing the orientation on $\C^2$ to disagree with the complex orientation changes this to the model for a negative node.
	\end{enumerate}	
	\end{examples}

	Our use of singular branched coverings here comes from their connection with symplectic manifolds. The following definition captures the key result that we need from Auroux's work.
	
	\begin{definition}\label{def:ssbc}
		A map $f\colon (X,\omega)\to (\cptwo,\omega_{FS})$ of degree $k$ is a \emph{symplectic singular branched covering} if near every point it is modeled smoothly on one of the three following maps.
		\begin{enumerate}
			\item local diffeomorphism: $(z_1,z_2)\mapsto (z_1,z_2)$
			\item cyclic branched covering: $(z_1,z_2)\mapsto (z_1^d,z_2)$
			\item cusp covering: $(z_1,z_2)\mapsto (z_1^3-z_1z_2,z_2)$
		\end{enumerate}
		Moreover, we require that the branch locus $R\subset X$ (where $f$ cannot be modeled by the local diffeomorphism) and its image $f(R)$ be (singular) symplectic surfaces, the co-homology class of the pull-back of the Fubini-Study form satisfies $[f^*\omega_{FS}]=k[\omega]\in H^2(X)$, and the $2$--form $\widetilde{\omega}_t:= tf^*\omega_{FS}+(1-t)k\omega$ is a symplectic form for all $t\in[0,1)$.
	\end{definition}
	
	The branched covering and cusp covering models were examined in detail in Examples~\ref{ex:cone-br} above.
	Note that these models are local in the domain of the map, and the images of these local charts in the co-domain $\cptwo$ may overlap. In particular, the image of the branching locus may self-intersect (either positively or negatively), as in the node in Example~\ref{ex:cone-br}(3).

	\begin{theorem}\cite[Theorem~1 and Proposition~11]{Auroux} \label{thm:auroux}
		Every closed symplectic $4$--manifold admits a symplectic singular branched covering to $\cptwo$.
	\end{theorem}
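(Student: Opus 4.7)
The plan is to follow Auroux's approach, which extends Donaldson's construction of approximately holomorphic sections to produce branched covering maps rather than just sections or Lefschetz pencils. First, I would fix a compatible almost complex structure $J$ on $(X,\omega)$ and choose a prequantum complex line bundle $L\to X$ with curvature $-i\omega$, so that sections of $L^{\otimes k}$ for large $k$ can be viewed as discretized wave functions with length scale $k^{-1/2}$. Donaldson's construction provides abundant \emph{approximately holomorphic sections}, whose $\bar{\partial}$-part is of size $O(k^{-1/2})$ relative to the $\partial$-part measured in the rescaled metric $k\omega$.

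Next, I would take three such sections $s_0,s_1,s_2\in H^0(L^{\otimes k})$ and form the candidate map $f=[s_0:s_1:s_2]\colon X\to\CP^2$. The heart of the argument is an iterated application of Auroux's estimated transversality theorem: one perturbs the triple within the space of approximately holomorphic sections so that (a) the simultaneous zero locus is empty, (b) the critical locus $R\subset X$ of $f$ is a smooth approximately $J$-holomorphic curve, (c) the restriction $f|_R$ is an immersion whose only non-immersive points are isolated cusps modeled on $z\mapsto z^3-zw$, and (d) the double points of $f(R)$ are transverse. Each condition is achieved by a $\gamma$-transversality argument applied to an appropriate jet bundle: condition (a) uses the $0$-jet, (b) uses the $1$-jet via the Jacobian, and (c) requires transversality of the $2$-jet along $R$.

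After these perturbations, I would verify in local charts that $f$ really does match the three prescribed normal forms. Away from $R$, the approximate $J$-holomorphicity together with the non-vanishing Jacobian give a local diffeomorphism after a small perturbation. Near a smooth point of $R$, one obtains the fold model $(z_1,z_2)\mapsto(z_1^2,z_2)$ by choosing adapted coordinates in which the fold line straightens out; a Moser-type argument within the fixed cohomology class upgrades this to a symplectic chart. Near a cusp, the genericity condition on the $2$-jet, combined with the same symplectic straightening, produces the normal form $(z_1,z_2)\mapsto(z_1^3-z_1z_2,z_2)$. The symplecticity of the branch locus $f(R)\subset\CP^2$ (with node and cusp singularities of the allowed types) follows because $f$ is approximately $J$-holomorphic on the approximately $J$-holomorphic curve $R$.

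Finally, I would check that $\widetilde{\omega}_t := tf^*\omega_{FS}+(1-t)k\omega$ is symplectic for $t\in[0,1)$. The form $f^*\omega_{FS}$ is $J$-positive semi-definite up to an $O(k^{-1/2})$ error, with kernel only along $R$, so a strictly positive convex combination with the taming form $k\omega$ is non-degenerate. The main obstacle is the estimated transversality package underlying step two: one needs simultaneous quantitative transversality for several stratified jet conditions while preserving the approximately holomorphic property, and it is this technical input — rather than the local normal form analysis or the Moser argument — that carries the weight of the proof.
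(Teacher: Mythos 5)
This statement is attributed to Auroux and is not proved in the paper; the paper's ``proof'' is the remark that follows, which simply explains that Theorem~\ref{thm:auroux} follows by combining Theorem~1 and Proposition~11 of Auroux's paper, and why $\varepsilon$-holomorphicity yields the symplecticity conditions demanded by the definition of a symplectic singular branched covering. Your sketch is a reasonable summary of Auroux's actual argument: approximately holomorphic sections of $L^{\otimes k}$, estimated transversality applied to successive jets to put the critical locus, its image, and the cusps into generic position, local normal forms for folds and cusps, and the positivity argument for $\widetilde{\omega}_t = t f^*\omega_{FS} + (1-t)k\omega$. One small correction: the ``Moser-type argument within the fixed cohomology class'' that you invoke to upgrade the local normal forms to \emph{symplectic} charts is not actually part of Auroux's argument, nor is it needed. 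Auroux's local models live in approximately holomorphic coordinate charts (charts in which the compatible $J$ is $O(k^{-1/2})$-close to the standard $i$), not symplectic Darboux charts, and the paper's definition of symplectic singular branched covering only asks for \emph{smooth} local models; the symplectic content is carried separately by the requirements that $R$ and $f(R)$ be symplectic and that $\widetilde{\omega}_t$ be nondegenerate for $t<1$. With that caveat, your proposal faithfully tracks the argument that the paper delegates to Auroux.
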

	
	\begin{remark}
		Theorem~1 of~\cite{Auroux} states that every closed symplectic $4$--manifold admits an \emph{$\varepsilon$--holomorphic} singular covering. The $\varepsilon$--holomorphic condition is related to an almost complex structure $J$ which is compatible with the symplectic form. This condition means that the coordinate charts used to identify $f$ locally with the models take $J$ very close to the standard integrable complex structure on $\C^2$. For the purposes of this paper, we want to deal directly with the symplectic form instead of a compatible almost complex structure.
		If $\varepsilon<1$, the branch locus $R$ and its image $f(R)$ will be (singular) symplectic surfaces. In the paragraphs preceding \cite[Proposition 11]{Auroux}, it is noted that the cohomological condition $[f^*\omega_{FS}]=k[\omega]$ holds for Auroux's method of producing of branched covers. Auroux also proved that an $\varepsilon$--holomorphic singular covering also satisfies the symplectic condition that $\widetilde{\omega}_t:=tf^*\omega_{FS}+(1-t)k\omega$ is symplectic for $t\in[0,1)$  as \cite[Proposition 11]{Auroux}. Therefore Theorem \ref{thm:auroux} follows from combining these results from ~\cite{Auroux}. By Moser's theorem $(X,\widetilde{\omega}_t)$ is symplectomorphic to $(X,k\omega)$ for $t<1$. The pull-back form $f^*\omega_{FS}$ is not itself symplectic because it is degenerate along the branch locus. However, the condition that $\widetilde{\omega}_t$ is symplectic for $t<1$ shows that $f^*\omega_{FS}$ is arbitrarily close to a symplectic form that is symplectomorphic to $(X,k\omega)$. In fact, there is an explicit small perturbation of $f^*\omega_{FS}$ which is localized near the branch locus that yields a symplectic form directly from the smooth map $f$. We will discuss and use this perturbation in detail in Section~\ref{sec:lifting}.
	\end{remark}
	
	\begin{remark}
		Auroux actually shows that such symplectic singular branched coverings exist without needing the branched covering model for $d>2$. The $d=2$ (simple) branched covering model and the cusp covering model are the generic singularities that occur in holomorphic maps from one complex $2$--dimensional manifold to another. We allow the non-generic $d>2$ branched covering model because we can sometimes find a map $f$ with significantly simpler branch locus if we allow the branched covering model with higher values of $d$. Examples of this are included in Section~\ref{sec:Examples}.
	\end{remark}

\section{Singular branched covers pull back trisections}
\label{sec:pull-back}

In this section we prove that the pull back of a trisection under a singular branched cover is a trisection. This part is purely about the smooth topology and holds for general singular branched coverings. We will address the symplectic aspects in Section~\ref{sec:lifting}.
	
	\begin{theorem}
		\label{prop:branch-tri}
		Let $X$ be a 4--manifold, and let $\TT$ be a trisection on $X$ given by the decomposition
		$$X = Z_1\cup Z_2\cup Z_3.$$
		Let $f\colon \widetilde X\to X$ be a singular branched covering with branch locus the singular surface $\cS$ in $X$.
		Assume that $\cS$ is in bridge trisected position with respect to $\TT$.
		Then, letting $\widetilde Z_\lambda = f^{-1}(Z_\lambda)$, the decomposition
		$$\widetilde X = \widetilde Z_1\cup \widetilde Z_2\cup \widetilde Z_3$$
		is a trisection of $\widetilde X$, provided $\widetilde X$ is connected.
	\end{theorem}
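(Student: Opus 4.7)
The plan is to verify each of the three defining properties of a trisection for the decomposition $\widetilde X = \widetilde Z_1 \cup \widetilde Z_2 \cup \widetilde Z_3$, leveraging the fact that the bridge trisection hypothesis forces every singular point of $\cS$ to lie in the interior of some sector $Z_\lambda$ (indeed, inside the properly embedded $4$-ball containing $\cD_\lambda$), away from the trisection surface and its bounding handlebodies.

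First, I would dispatch the triple intersection $\widetilde\Sigma := f^{-1}(\Sigma)$ and each double intersection $\widetilde H_\lambda := f^{-1}(H_\lambda)$. Because $\cS$ is in bridge trisected position, $\Sigma \cap \cS$ is a finite collection of smooth bridge points and each $\cT_\lambda = H_\lambda \cap \cS$ is a trivial tangle of smooth arcs, so neither $\Sigma$ nor $H_\lambda$ contains a singular point of $\cS$. Consequently $f|_{\widetilde\Sigma} \colon \widetilde\Sigma \to \Sigma$ is a genuine branched covering of a closed surface over finitely many points, and hence $\widetilde\Sigma$ is a closed surface. Likewise, $f|_{\widetilde H_\lambda}$ is a genuine branched covering of a $3$-dimensional handlebody along a trivial tangle; cutting $H_\lambda$ along compressing disks produces $3$-balls each containing a single boundary-parallel arc, and branched covers of $B^3$ along such an arc are disjoint unions of $3$-balls, which reassemble to exhibit $\widetilde H_\lambda$ as a $3$-dimensional handlebody with $\partial\widetilde H_\lambda = \widetilde\Sigma$. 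All three handlebodies share the genus of $\widetilde\Sigma$ by construction.

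Next, I would show each sector $\widetilde Z_\lambda$ is a $4$-dimensional $1$-handlebody, which is where the singular nature of $f$ enters. Since $\cD_\lambda$ is contained in a properly embedded $4$-ball $B \subset Z_\lambda$, write $Z_\lambda = (Z_\lambda \setminus \mathrm{int}(B)) \cup B$. On $Z_\lambda \setminus \mathrm{int}(B)$ the map $f$ is an unbranched covering of a $1$-handlebody (with a $3$-ball bite removed from its boundary), so its preimage is a disjoint union of $1$-handlebodies, glued to $f^{-1}(B)$ along $3$-balls. Thus the crux is to prove that $f^{-1}(B)$ is a disjoint union of $1$-handlebodies. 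My approach is to excise pairwise disjoint small $4$-ball neighborhoods $B_i^p$ of each singular point $p_i \in \cD_\lambda$; by Definition~\ref{def:sing-br}, each $f^{-1}(B_i^p)$ is a disjoint union of $4$-balls. Over the complement $B \setminus \bigcup_i \mathrm{int}(B_i^p)$ the branch locus is a smooth properly embedded surface (the boundary-parallel smooth disk components of $\cD_\lambda$ together with annular collars coming from each cone or node component), and $f$ restricts to a genuine smooth branched covering. The trivial disk-tangle arguments appearing in~\cite{CK,LM-Complex,MZ-Bridge2} extend to this setting to exhibit the preimage as a disjoint union of $1$-handlebodies with $S^3$ boundary components at the excised neighborhoods. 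Filling in those $S^3$ boundaries with the $4$-balls $f^{-1}(B_i^p)$ preserves the $1$-handlebody structure, as does the outer gluing along $3$-balls; since $\widetilde X$ is connected by hypothesis, $\widetilde Z_\lambda$ is a single $\natural^{\widetilde k_\lambda}(S^1 \times B^3)$.

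The main obstacle I anticipate is justifying that the branched cover of $B \setminus \bigcup_i \mathrm{int}(B_i^p)$ along the smooth surface with collared annular ends is indeed a $1$-handlebody with $3$-sphere boundary components at each excised $B_i^p$. The smooth trivial disk-tangle case is treated in the bridge trisection literature, but the presence of annular ends requires care. I would handle this with a Morse-theoretic argument: choose a radial Morse function on $B$ whose critical points coincide with the minima of the smooth disk components of $\cD_\lambda$ and whose level sets meet the branch locus transversely outside thin collars of the $B_i^p$; then track how the branched cover evolves through critical levels, verifying that only index-$0$ and index-$1$ handles are introduced and that passing through the collared region contributes only trivial product cobordisms, concatenated with the prescribed $4$-ball pieces $f^{-1}(B_i^p)$.
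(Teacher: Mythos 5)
Your overall strategy follows the paper's: show $\widetilde\Sigma$ is a closed surface, $\widetilde H_\lambda$ a handlebody, and $\widetilde Z_\lambda$ a $1$--handlebody, using the fact that bridge trisected position pushes all singular points into the interiors of the sectors, and then use connectivity of $\widetilde X$ to conclude. Your treatments of $\widetilde\Sigma$ and $\widetilde H_\lambda$ are essentially the paper's (the paper invokes the product structure $(S,\mathbf{y})\times I$ of a trivial tangle directly rather than cutting along a compressing-disk system, but this is the same idea).

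Where you diverge is the argument for $\widetilde Z_\lambda$, and the divergence is what creates the gap you flag. You excise a \emph{single} properly embedded $4$--ball $B$ containing all of $\cD_\lambda$, and then must understand $f^{-1}(B)$, which forces you to further decompose into small balls around the singular points and the leftover region $B\setminus\bigcup_i B_i^p$. Over that leftover region the branch locus is a surface with disk \emph{and annulus} components, which is not a trivial disk-tangle, so the cited bridge-trisection arguments do not apply directly -- you rightly identify this as the obstacle and propose an unworked Morse-theoretic cobordism analysis to bridge it. The paper avoids this entirely by excising \emph{one small $4$--ball $B_D$ per connected component $D$ of $\cD_\lambda$}, so that $Z^{\mathrm{rest}}:=Z_\lambda\setminus\bigcup_D \mathrm{Int}(B_D)$ contains no branch locus at all. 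Then $f^{-1}(Z^{\mathrm{rest}})$ is an unbranched cover of a regular neighborhood of a graph, hence a $1$--handlebody, and -- this is the key observation -- each $f^{-1}(B_D)$ is a disjoint union of $4$--balls because, by the very definition of a singular branched covering, $f$ restricted to each preimage component is the cone on a branched covering of $S^3$ by itself. Since $\partial B_D$ splits into a $3$--ball $K\subset Z^{\mathrm{rest}}$ disjoint from the branch locus and a $3$--ball $L\subset\partial Z_\lambda$ containing the branch link, each $4$--ball $\widetilde B_j$ attaches to $\widetilde Z^{\mathrm{rest}}$ along $\widetilde K_j$, a disjoint union of, say, $m$ three-balls, and that attachment is exactly $m-1$ four-dimensional $1$--handles. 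The conelike local structure of $f$ does all the work; no Morse theory on a branched cover of a ball-with-balls-removed along an annular surface is needed. Your approach is not wrong in spirit, but the paper's choice of decomposition turns the hardest step in your proposal into a direct consequence of the definitions.
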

	
	\begin{proof}
		Let $H_\lambda = Z_\lambda\cap Z_{\lambda-1}$, and let $\Sigma = \partial H_\lambda$.  Let $\widetilde H_\lambda = f^{-1}(H_\lambda)$, and let $\widetilde\Sigma = f^{-1}(\Sigma)$. Note that $\widetilde H_\lambda = \widetilde Z_\lambda\cap \widetilde Z_{\lambda-1}$ and $\widetilde \Sigma = \widetilde Z_1\cap \widetilde Z_2 \cap \widetilde Z_3$.
		
		Consider the restriction $f\vert_{\widetilde\Sigma}\colon\widetilde\Sigma\to\Sigma$.  This map is a covering of the closed, orientable surface $\Sigma$, branched along the bridge points $\bold x = \Sigma\cap\cS$.  It follows that $\widetilde\Sigma$ is a closed, orientable surface.  A priori, $\widetilde\Sigma$ may be disconnected, however.
		
		Next, consider the restriction $f\vert_{\widetilde H_\lambda}$.  This map is a covering of the handlebody $H_\lambda$, branched along the seams $\cT_\lambda = H_\lambda\cap\cS$, which form a trivial tangle $(H_\lambda,\cT_\lambda)$.  Such a trivial tangle is diffeomorphic to $(S,\bold y)\times I$, where $S$ is a compact, orientable surface with boundary and $\bold y\in\text{Int}(S)$ is a collection of points.  (This is the definition of a trivial tangle.)  It follows that $f\vert_{\widetilde H_\lambda}$ has the structure of a covering of $S$, branched along the points $\bold y$, crossed with $I$.  Since such a cover of $S$ is, again, a compact, orientable surface with boundary, we see that $\widetilde H_\lambda$ is a handlebody (or at least a disjoint union of handlebodies).  Note that $\partial \widetilde H_\lambda = \widetilde\Sigma$, so $\widetilde \Sigma$ is connected if and only if the $\widetilde H_\lambda$ are.
		
		Finally, consider the restriction $f\vert_{\widetilde Z_\lambda}$. It remains to see that $\widetilde Z_\lambda$ is a 4--dimensional 1--handlebody.  Break $Z_\lambda$ up into two pieces based on the components of the disk-tangle $\cD_\lambda$ as follows.  Recall that each connected component of $\cD_\lambda$ (as a subset of $Z_\lambda$) has boundary a knot or link. For each connected component $D$, consider a properly embedded 4--ball $B_D$ that splits that component from the rest of $\cD_\lambda$.  Let $Z^\text{branch}$ denote the disjoint union of these  4--balls.  Let $Z^\text{rest}$ denote $Z_\lambda\setminus\text{Int}(Z^\text{branch})$.  Note that $Z^\text{rest}$ is diffeomorphic to $Z_\lambda$; since each component of $Z^\text{branch}$ is a 4--ball with half of its boundary on $\partial Z_\lambda$, the removal of $Z^\text{branch}$ from $Z_\lambda$ doesn't change its diffeomorphism type.  Let $\widetilde Z^\text{branch} = f^{-1}(Z^\text{branch})$, and let $\widetilde Z^\text{rest} = f^{-1}(Z^\text{rest})$.
	
		By definition, the restriction $f\vert_{f^{-1}(B_D)}\colon f^{-1}(B_D)\to B_D$ is a map of a disjoint union of 4--balls $\widetilde B_1,\ldots, \widetilde B_l$ to the 4--ball $B_D$, such that each map $f_j\colon\widetilde B_j\to B_D$ is given as the cone on a (possibly trivial) covering of $S^3$ by itself, branched along a (possibly empty) subset of the components of the link $\partial D$. Note that the 3--sphere boundary of $B_D$ is divided into two 3--balls, $\partial B_D = K\cup L$ where $K=B_D \cap Z^\text{rest}$ and $L=B_D\cap \partial Z_\lambda$. Note that $K$ is disjoint from the branch locus and $L$ contains the link $\partial D$. Then for each connected component $\widetilde B_j$ of $f^{-1}(B_D)$, the boundary 3--sphere of $\widetilde B_j$ is decomposed into $\widetilde K_j=\widetilde B_j\cap f^{-1}(K)$ and $\widetilde L_j=\widetilde B_j\cap f^{-1}(L)$. Because $K$ is a 3--ball disjoint from the branch locus, $\widetilde K_j$ is a disjoint union of 3--balls. $\widetilde L_j$ is a 3--sphere with a collection of 3--balls removed.
		Now, each 4--ball $\widetilde B_j$ is attached to $\widetilde Z^\text{rest}$ along $\widetilde K_j$, a disjoint union of 3--balls. If $\widetilde K_j$ consists of a single 3--ball, then the attachment of $\widetilde B_j$ adds only a trivial collar to $\widetilde Z^\text{rest}$. If $\widetilde K_j$ consists of exactly two 3--balls, attaching $\widetilde B_j$ to $\widetilde Z^\text{rest}$ amounts to attaching a $4$--dimensional $1$--handle. In general, when $\widetilde K_j$ consists of $m$ 3--balls, the attachment of $\widetilde B_j$ to $\widetilde Z^\text{rest}$ is equivalent to attaching $m-1$ $4$--dimensional $1$--handles.

		It follows that $\widetilde Z_\lambda = \widetilde Z^\text{rest}\cup\widetilde Z^\text{branch}$ is obtained by attaching 4--dimensional 1--handles to $\widetilde Z^\text{rest}$.  We next show that $\widetilde Z^\text{rest}$ is a 4--dimensional 1--handlebody (or a disjoint union thereof), which will imply that $\widetilde Z_\lambda$ is a 4--dimensional 1--handlebody (or disjoint union thereof), as desired.
		
		The restriction $f\vert_{\widetilde Z^\text{rest}}\colon \widetilde Z^\text{rest}\to Z^\text{rest}$ is an unbranched covering of $Z^\text{rest}$.
		Since $Z^\text{rest}$ is a 4--dimensional neighborhood of a graph, $\widetilde Z^\text{rest}$ is also a 4--dimensional neighborhood of a graph.  It follows that $\widetilde Z^\text{rest}$, hence $\widetilde Z_\lambda$ is a 4--dimensional 1--handlebody, or a disjoint union thereof.
		
		To complete the proof, we must argue that $\widetilde\Sigma$, $\widetilde H_\lambda$, and $\widetilde Z_\lambda$ are each connected. However, we have shown that $\widetilde\Sigma$ is a disjoint union of closed surfaces; that $\widetilde H_\lambda$ is a 3--dimensional handlebody with $\partial\widetilde H_\lambda = \widetilde\Sigma$; and that $\widetilde Z_\lambda$ is a 4--dimensional 1--handlebody with $\partial\widetilde Z_\lambda = \widetilde H_\lambda\cup_{\widetilde\Sigma}\widetilde H_{\lambda+1}$.  (That the boundaries are as claimed follows from the fact that the restrictions $f\vert_{\widetilde\Sigma}$, $f\vert_{\widetilde H_\lambda}$, and $f\vert_{\widetilde Z_\lambda}$ are all proper maps.) Therefore, $\widetilde\Sigma$, $\widetilde H_\lambda$, $\widetilde Z_\lambda$, and $\widetilde X$ must have the same number of connected components, as desired.

	\end{proof}

	In light of this proposition, we call the trisection of $\widetilde X$ obtained by pulling back $\TT$ via $f$ the \emph{pullback trisection} and we denote it by $f^*(\TT)$.
	
	\begin{remark}
		We note that the trisection parameters $\widetilde g$ and $\bold{\widetilde k} = (\widetilde k_1,\widetilde k_2,\widetilde k_3)$ corresponding to $f^*(\TT)$ can be calculated in practice.  The core $\widetilde\Sigma$ is a branched covering of the core $\Sigma$, so $g(\widetilde\Sigma)$ can related to $g(\Sigma)$ via the Riemann-Hurwitz Formula by considering the local degrees of the covering $f$.  Since $\partial \widetilde Z_\lambda$ is a cover of $\partial Z_\lambda$, branched along the link $\partial \cD_\lambda$, the parameter $\widetilde k_\lambda$ is related to the parameter $k_\lambda$ by understanding each of the coverings of a disjoint union of copies of $S^3$ over $S^3$, branched along the split components of this link.  In the case that $f$ is a symplectic singular branched covering, these components are Hopf links and trefoils, so this computation can be done by knowing the number of each and the local degrees.
	\end{remark}

	\section{Putting symplectic braided surfaces in bridge position}
	\label{sec:bridge}
	
	Theorem~\ref{prop:branch-tri} proves that a branched cover of a singular surface in bridge trisected position pulls back the trisection downstairs to a trisection upstairs. We want to be able to combine this result with Auroux's theorem (Theorem~\ref{thm:auroux}) that every symplectic manifold is a symplectic singular branched cover over $\cptwo$. In order to combine these, we need to ensure that the symplectic branch locus of an Auroux branched covering is in bridge trisected position with respect to a Weinstein trisection on $\cptwo$.

	To do this, we start with a refinement of Theorem \ref{thm:auroux} that was proven by Auroux and Katzarkov \cite{Auroux-Katzarkov}, which ensures that we find a symplectic singular branched covering $f\colon X\to \cptwo$ with the additional property that the image of the branch locus sits in a nice position with respect to the standard linear pencil on $\cptwo$ (see the definition below for a braided surface). In this section we will show that we can symplectically isotope such a braided surface so that it is in bridge position with respect to a genus one Weinstein trisection on $\cptwo$. Therefore, up to a symplectomorphism of $\cptwo$ from Lemma~\ref{l:sympliso} realizing this isotopy, the Auroux-Katzarkov branched covering pulls back a Weinstein trisection of $\cptwo$ to a trisection of an arbitrary symplectic manifold. In section~\ref{sec:lifting}, we will show that in fact this lifted trisection has Weinstein sectors, and put everything together to prove our main theorem.

First we precisely define the set-up and definitions for the output of Auroux and Katzarkov's results.

We will work with the standard linear pencil on $\cptwo$ given by the map $\pi\colon \cptwo\setminus \{[0:0:1]\} \to \cpone$ defined by $\pi([z_0:z_1:z_2])=[z_1:z_2]$. We will often work in the affine chart where $z_0\neq 0$ with coordinates $(z_1,z_2)$. The restriction of $\pi$ to this chart is the map $\pi\colon \C^2\to \C$ where $\pi(z_1,z_2)=z_1$.

\setcounter{theorem}{-1}
\begin{remark}
\label{rmk:coordinates}

	In Example~\ref{ex:CP2-standard}, we introduced homogeneous coordinates $[z_1:z_2:z_3]$ on $\cptwo$ that were compatible with the convention of labeling the sectors $Z_\lambda$ of a trisection with $\lambda\in\{1,2,3\}$.  In what follows, we adopt the more traditional convention of homogeneous coordinates given by $[z_0:z_1:z_2]$.  The main motivation for this is to be consistent with the conventions of~\cite{Auroux} and~\cite{Auroux-Katzarkov}.  These two conventions are related by the cyclic relabeling $[z_1:z_2:z_3]\mapsto[z_3:z_1:z_2]$.  In the coordinates of Example~\ref{ex:CP2-standard}, the standard linear pencil described above would be the map $\pi\colon\cptwo\setminus\{[0:1:0]\}$ given by $\pi([z_1:z_2:z_3]) = [z_1:z_2]$, and one could adopt the same affine coordinates in either case.
	If one considers the labeling of the pieces of the trisection to be given by $\lambda\in\Z_3$, then we have the $0\equiv 3$, and this congruence respects all aspects of the development that follows.
\end{remark}
	
\begin{definition}[cf. Definition 1 of \cite{Auroux-Katzarkov}]
		A \emph{braided surface} in $\cptwo$ is a singular surface $Q=g(S)$ for $g\colon S\to \cptwo$ such that
		\begin{enumerate}
			\item $[0:0:1]\notin Q$;
			\item $Q$ is positively transverse to the fibers of the pencil $\pi$, except at finitely many points where it becomes non-degenerately positively tangent to the fibers (locally modeled on $w_2^2=w_1$ where $\pi$ is the projection to the $w_1$ coordinate);
			\item the only singularities of $Q$ are (simple) cusps diffeomorphically modeled by Example~\ref{ex:cone-br} and transverse double points (nodes) which may be either positive or negative self-intersections;
			\item the cusps are positively transverse to the fibers of $\pi$; and
			\item the ``special points'' (cusps, double points, and tangency points) all lie in distinct fibers of $\pi$.
		\end{enumerate}
		A \emph{symplectic braided surface} is a braided surface which is a symplectic singular surface.
	We will say that two (symplectic) braided surfaces are \emph{isotopic through (symplectic) braided surfaces} if there is a one parameter family of (symplectic) braided surfaces connecting them.

	\end{definition}

\begin{remark}
	Auroux and Katzarkov work with an approximately holomorphic version of braided surfaces, which they call ``quasiholomorphic curves".  For our purpose, the relevant fact is that  approximately holomorphic implies symplectic.
\end{remark}
	
	\begin{theorem}[\cite{Auroux-Katzarkov}] \label{thm:AK}
		Every closed symplectic $4$--manifold admits a symplectic singular branched covering, such that the image of the branch locus in $\cptwo$ is a symplectic braided surface.
	\end{theorem}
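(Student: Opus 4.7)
The plan is to start from Theorem \ref{thm:auroux}, which supplies a symplectic singular branched covering $f\colon X\to \CP^2$ whose branch image $f(R)$ is a singular symplectic submanifold with only cusps and nodes as singularities. The task is to refine this so that $f(R)$ is braided with respect to the standard pencil $\pi([z_0:z_1:z_2])=[z_0:z_1]$. My first step would be to dispose of the ``baseline'' conditions (1), (3), and (5) in the definition of a braided surface by composing with a generic projective transformation of $(\CP^2,\omega_{FS})$: such a transformation is a symplectomorphism, and a generic one pushes $f(R)$ off the base point $[0:0:1]$ and separates the distinguished points (cusps and nodes) into distinct fibers of $\pi$. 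These are open, dense conditions.

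The heart of the matter is to arrange conditions (2) and (4): $f(R)$ must be positively transverse to the fibers of $\pi$ away from finitely many non-degenerate \emph{positive} tangencies, and cusps must be positively transverse to fibers. The natural tool is Auroux's approximately holomorphic framework. Recall that $f$ is built from an approximately holomorphic triple of sections $(s_0,s_1,s_2)$ of $L^{\otimes k}$, with $c_1(L)=[\omega]/2\pi$, and the pencil $\pi$ pulls back to the ratio $[s_0:s_1]$. I would therefore construct $f$ and $\pi$ simultaneously from the \emph{same} $\varepsilon$-holomorphic data, so that both the branched covering structure and the pencil structure on $X$ are controlled by a single small parameter $\varepsilon$.

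In this near-integrable setting, tangencies of $f(R)$ with fibers of $\pi$ are critical points of the holomorphic-to-first-order function $\pi\vert_{f(R)}$. A Donaldson-type transversality-with-estimates argument, applied to the appropriate $1$-jet, would force these critical points to be nondegenerate and isolated after a small perturbation of the $s_i$. The \emph{positivity} of each tangency, as well as the positivity of cusp tangencies, is then automatic from $\varepsilon$-holomorphicity: at each such point the local picture of $f(R)$ is a $C^1$-small perturbation of the holomorphic model $z_2^2=z_1$ (or of a holomorphic cusp), and the orientations induced by $\omega$ and by the pencil must agree. A final small symplectic isotopy separates all special fibers to complete condition (5).

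The main obstacle will be the \emph{simultaneous} control: each perturbation of the sections used to achieve non-degeneracy of tangencies must be small enough not to destroy Auroux's earlier normal-form theorems for $f$ (only branching and cusp models) and not to degrade the positivity or isolation of previously arranged special points. This forces an inductive application of the quantitative transversality machinery, with perturbations at each stage scaled against the open-condition estimates secured at the previous stage. Proving that this inductive scheme converges, and that each achieved geometric condition is open in the relevant $C^3$-norm on approximately holomorphic sections, is the technical core of the argument and where I would expect the bulk of the work to lie.
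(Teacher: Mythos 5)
This is a cited result, not one the paper proves: the authors state Theorem~\ref{thm:AK} as a black box with attribution to \cite{Auroux-Katzarkov} and never reprove it, so there is no ``paper's own proof'' to compare against. What follows is therefore an assessment of your sketch as a reconstruction of the Auroux--Katzarkov argument rather than a comparison.

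Your outline captures the right ingredients: approximately holomorphic sections of $L^{\otimes k}$ controlling both the covering and the pencil, Donaldson-type quantitative transversality of $1$-jets to force non-degenerate isolated tangencies, and positivity of tangencies and cusps coming from the $\varepsilon$-holomorphic bounds on the local models. You are also right that the construction must be run \emph{simultaneously} -- you cannot first take $f$ from Theorem~\ref{thm:auroux} and then perturb the branch curve in $\CP^2$ afterward, because that would alter $f$ in an uncontrolled way; instead one perturbs the defining sections and tracks all the transversality conditions together through an inductive scheme. That is indeed the technical core, and you identify it honestly as the place where the real work lies.

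Two things to repair or sharpen. First, you write that a generic projective transformation of $\CP^2$ is a symplectomorphism of $(\CP^2,\omega_{FS})$; this is false -- only elements of $PU(3)$ preserve $\omega_{FS}$. For the baseline conditions you actually need (moving $[0:0:1]$ off $Q$, putting finitely many special points in distinct fibers) a unitary transformation suffices, but you should say $PU(3)$, not $PGL_3(\C)$. Second, and more substantively, Remark~\ref{smoothisbraided} in the paper flags exactly what the Auroux--Katzarkov theorem adds beyond Theorem~\ref{thm:auroux}: when the branch locus has \emph{negative} nodes (which arise because the covering is only approximately holomorphic, not honestly holomorphic), braidedness is not automatic. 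Your sketch treats negative double points in passing but does not engage with the fact that this is the specific obstruction the theorem overcomes -- a proof proposal should make clear where the ``new'' difficulty is relative to the result one is starting from, and here that is the negative-node issue, not the open-dense genericity conditions.
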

	
	This theorem goes beyond Auroux's original Theorem~\ref{thm:auroux} because when the branch locus in $\cptwo$ has negative double points (see Example~\ref{ex:cone-br}), it is not automatic that the surface can be assumed to be braided with respect to a linear pencil. In general it is not currently known whether there are symplectic $4$-manifolds such that all symplectic branched coverings to $\cptwo$ have branch locus with negative double points. However, this possibility cannot be ruled out by the techniques of~\cite{Auroux,Auroux-Katzarkov}.
	
	\begin{remark}  \label{smoothisbraided}
		 On the other hand, when there are no negative double points, any smooth symplectic surface, or symplectic surface whose singularities are sympletically modeled on complex curve singularities (which we will call an \emph{algebraically singular symplectic surface}), can be assumed to be braided with respect to a linear pencil on $\cptwo$ (so the additional work of~\cite{Auroux-Katzarkov} is not needed in this case). More specifically, any smooth (or algebraically singular) symplectic surface $Q$ can be realized as a $J$--holomorphic curve for some almost complex structure $J$ which is compatible with the standard symplectic form $\omega_{FS}$ on $\cptwo$. The space $\mathcal{J}^{\omega_{FS}}(Q)$ of such $J$ which make $Q$ $J$--holomorphic is contractible and in particular non-empty (the smooth case is standard, see \cite[Lemma 3.4]{GollaStarkston} for the generalization to the singular case). Moreover for any such $J\in \mathcal{J}^{\omega_{FS}}(Q)$, we obtain a pencil of $J$--holomorphic lines through the point $[0:0:1]$ by Gromov~\cite[Theorem 0.2.B]{Gromov}, and (after a generic perturbation of $Q$) $Q$ will be braided with respect to this pencil by positivity of intersections of $J$-holomorphic curves. Moreover, Gromov shows that there is a symplectomorphism taking this $J$--holomorphic pencil to the standard complex linear pencil. Therefore, up to a symplectomorphism of $\cptwo$ (which is always symplectically isotopic to the identity), every symplectic surface which is smooth or algebraically singular is braided. This observation combined with Theorem~\ref{thm:symplbridgepos}, which we will prove at the end of this section, will imply Theorem~\ref{thm:symp_isot}.
	\end{remark}

	Now we will show that we can symplectically isotope a symplectic braided surface into bridge position through symplectic braided surfaces. To keep the braided condition, we will need to keep track of the interaction of $Q$ with the fibration $\pi\colon \cptwo\setminus\{[0:0:1]\}\to \cpone$. To guide the reader, we start with an outline of the proof of our main result of this section schematically represented in Figure~\ref{fig:outline}.
	
	\begin{figure}
		\centering
		\includegraphics[scale=.5]{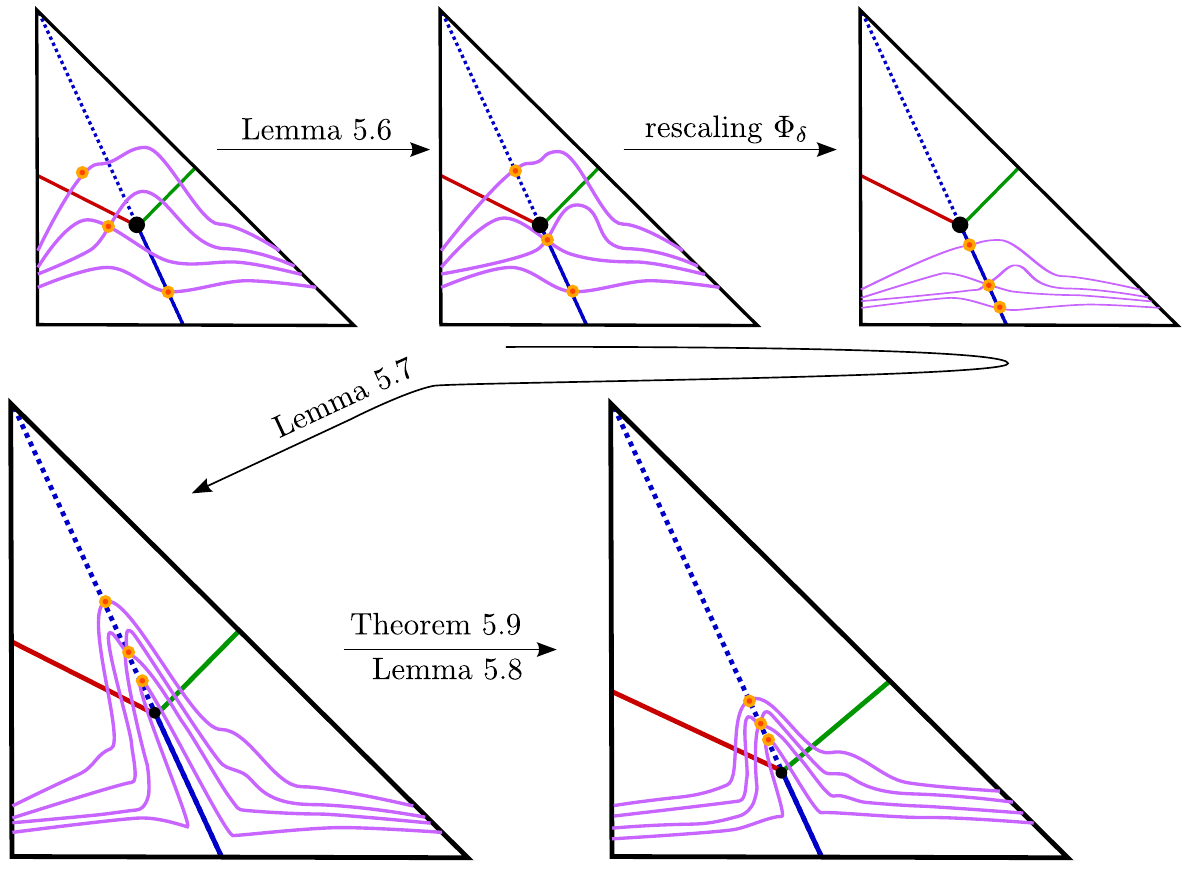}
		\caption{A rough schematic for the strategy to construct a symplectically isotopic braided in bridge trisected position.}
		\label{fig:outline}
	\end{figure}
	
	\begin{enumerate}
		\item In Lemma~\ref{l:equator}, we show how symplectic braided surfaces can be isotoped through symplectic braided surfaces so that critical values of $\pi\vert_Q$ lie on the equator of the copy of $\cpone$ given by $|z_1|=|z_0|$.  
		\item After a rescaling in the direction of the fibers of $\pi$, we can assume $Q$ becomes disjoint from $Z_3$ and the critical points of $\pi\vert_Q$ lie in $H_2$. We construct a class of fiber preserving rescaling maps below, which we will use throughout. These rescalings can be applied to the entire surface $Q$, or a subsurface. The key properties of these rescaling maps are studied in Lemma~\ref{l:rescale}.
		\item In Lemma~\ref{l:bridgepos}, we show that a symplectic braided surface whose critical values lie on the equator of $\cpone$ can be isotoped through braided surfaces in a controlled, fiber-preserving way to end in bridge trisected position with respect to a scaled version of the genus one Weinstein trisection $\TT_0$. This is where we make the key technical argument to get the surface into bridge position. That this stretching can be done through symplectic braided surfaces is not shown until later. 
		\item In Theorem~\ref{thm:symplbridgepos}, we take the isotopy of braided surfaces produced by Lemma~\ref{l:bridgepos}, and flatten it by a global rescaling to ensure the entire isotopy is symplectic. The end point of the isotopy will be in bridge trisected position with respect to a further rescaled version of the standard genus one Weinstein trisection $\TT_0$.
		\item In Lemma~\ref{l:scaletrisect}, we show that the globally rescaled version of $\TT_0$ is itself Weinstein, so our final symplectically isotoped braided surface is in bridge trisected position with respect to a genus one Weinstein trisection on $\cptwo$ as desired.
	\end{enumerate}

	Many of the ambient isotopies used in what follows involve stretching in a fiber preserving way.  We construct a useful class of such maps as follows.
	If $\phi\colon \cpone \to \R_{>0}$ is any smooth function, we obtain a fiber preserving diffeomorphism of $\cptwo$ by the map $\Phi\colon \cptwo\to \cptwo$, given by
	$$\Phi([z_0:z_1:z_2])=[z_0:z_1:\phi([z_0:z_1])z_2].$$
	If $Q$ is braided, then $\Phi(Q)$ is still braided. However, if $Q$ is symplectic, in general it is possible that $\Phi(Q)$ may no longer be symplectic. Note that when $\phi\equiv 1$, $\Phi$ is the identity. The case when $\phi$ is constant is particularly easy to understand.  In what follows, $S$ and $S'$ represent abstract surfaces and $g$ is an immersion except at isolated singular points, such that $g(S)$ and $g(S')$ are singular surfaces, possibly with boundary.

	\begin{lemma} \label{l:rescale}
		Let $\Phi_c([z_0:z_1:z_2])=[z_0:z_1:cz_2]$.
		\begin{enumerate}
			\item If $Q=g(S)$ where $g\colon S\to \cptwo\setminus\{[0:0:1]\}$ is a symplectic braided surface, and $c\leq 1$, then $\Phi_c(Q)$ is also a symplectic braided surface with respect to $\omega_{FS}$.
			\item If $g\colon S'\to \cptwo\setminus\{[0:0:1]\}$ is positively transverse to the fibers of $\pi$ and $S'$ is compact then there exists a $c$ sufficiently small such that $\Phi_c\circ g(S')$ is symplectic (with respect to $\omega_{FS}$). (Note that if $g$ is positively transverse to the fibers of $\pi$ its image is necessarily braided with no tangencies or cusps.)
		\end{enumerate}
	\end{lemma}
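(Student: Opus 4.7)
The plan is to reduce both parts to a pointwise positivity condition on tangent planes, verified by a direct computation of the Fubini--Study form in an affine chart. The first observation I would record is that $\pi\circ\Phi_c=\pi$, so $\Phi_c$ preserves the fibers of the pencil. This immediately implies that positive transversality to the fibers, and (for part~(1)) the local models at tangency points, cusps, and nodes, are all preserved under $\Phi_c$, up to a holomorphic rescaling of the base coordinate on $\cpone$. Hence the ``braided'' half of the hypothesis is preserved by every $\Phi_c$ with $c>0$, and in both parts what remains is to verify that the image surface is symplectic with respect to $\omega_{FS}$.

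Next, I would work in the affine chart $\{z_0\neq 0\}$, in which
\[
\Phi_c^*\omega_{FS} \;=\; \frac{i}{2}\,\partial\bar\partial\log\bigl(1+|z_1|^2+c^2|z_2|^2\bigr).
\]
At a smooth point where the image surface is transverse to the fiber, the surface is locally a graph $z_2=f(w)$ over $w=z_1$; I would set $\alpha=\partial_w f$, $\beta=\partial_{\bar w}f$ and compute the pullback of $\Phi_c^*\omega_{FS}$ via the Wirtinger differentials $dw,d\bar w$. After some bookkeeping with the off-diagonal entries of the Fubini--Study metric, the outcome should take the form
\[
\Phi_c^*\omega_{FS}\big|_{g(S)} \;=\; \frac{1 + c^2\, B(z_1,z_2,\alpha,\beta)}{\bigl(1+|z_1|^2+c^2|z_2|^2\bigr)^2}\cdot \frac{i}{2}\,dw\wedge d\bar w,
\]
for a specific real-valued expression $B$ that is independent of $c$. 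Symplecticity at the point then reduces to the scalar inequality $1+c^2 B>0$.

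Both conclusions now follow by elementary means. For part~(1), since $B$ does not depend on $c$, the function $t\mapsto 1+tB$ is affine in $t=c^2$; its value at $t=1$ is $1+B>0$ by the hypothesis that $Q$ is symplectic, and its value at $t=0$ is simply $1>0$, so it is positive on all of $[0,1]$. Tangency points are handled separately, since $\Phi_c$ is a biholomorphism of the affine chart for $c>0$ and so the fiber direction (a complex line) is automatically symplectic for $\Phi_c^*\omega_{FS}$; cusps and nodes are handled by applying the same affine argument to their (positively transverse) limiting tangent planes. For part~(2), the surface $g(S')$ need not be symplectic and $B$ may be negative, but $B$ is a continuous function on the compact image $g(S')$ and is therefore bounded below by some $-M$; any $c>0$ with $c^2 M<1$ then suffices.

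The main obstacle I anticipate is the explicit computation of $\Phi_c^*\omega_{FS}$ on the graph: the off-diagonal metric terms $g^{(c)}_{1\bar 2}$ contribute $\alpha$- and $\bar\alpha$-linear pieces that must combine cleanly with the diagonal contributions to produce the simple form $1+c^2 B$ with $B$ manifestly $c$-independent. Once that computation is in hand, both parts are immediate consequences of affine arithmetic and compactness.
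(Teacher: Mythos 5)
Your proposal is correct and follows essentially the same strategy as the paper (reduce to positivity of an expression affine in $c^2$, checked at the endpoints for part~(1) and bounded by compactness for part~(2)), but the intermediate decomposition you use is different, and it is in fact the more robust one. Your anticipated Kähler computation does go through: parametrizing a transverse tangent plane at $(z_1,z_2)$ by $v \mapsto (v,\alpha v + \beta\bar v)$ and writing $u_c = 1 + |z_1|^2 + c^2|z_2|^2$, one finds
\[
\Phi_c^*\omega_{FS}\big|_T \;=\; \frac{1+c^2 B}{u_c^{\,2}}\cdot\frac{i}{2}\,dv\wedge d\bar v, \qquad
B \;=\; |z_2|^2 - 2\,\mathrm{Re}\!\bigl(z_2\,\bar z_1\,\bar\alpha\bigr) + \bigl(1+|z_1|^2\bigr)\bigl(|\alpha|^2 - |\beta|^2\bigr),
\]
so $B$ really is independent of $c$, the off-diagonal terms $g_{1\bar 2}$ contribute exactly the middle term of $B$, the numerator is affine in $t=c^2$, and the denominator is manifestly positive. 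The paper instead extracts the affine dependence from the two-form decomposition $\Phi_c^*(\omega_{FS}) = (1-c^2)\,dx_1\wedge dy_1 + c^2\omega_{FS}$ and pulls back along $g$; taken literally as an equality of $2$-forms on the affine chart, this identity does not hold away from the origin (one can test at $(z_1,z_2)=(1,0)$ for $c<1$), although the qualitative conclusion it is used for---that the restricted form is a positive combination of $g^*(dx_1\wedge dy_1)\ge 0$ and $g^*\omega_{FS}>0$---is precisely what your exact pointwise computation delivers. In that sense your graph-parametrization route is the safer derivation of the same affine structure, while the paper's formula is a cleaner heuristic. Your treatment of the tangency points (tangent plane is the complex fiber line, on which the Kähler form $\Phi_c^*\omega_{FS}$ is automatically positive), of cusps and nodes (same affine argument on limiting tangent planes), and of part~(2) (bound $B$ from below on the compact image, then take $c^2 M < 1$) all line up with the paper's steps; the only additional point worth making explicit is that the positive-transversality hypothesis in part~(2) is exactly what makes the parametrization orientation-preserving, so that positivity of the scalar coefficient $\tfrac{1+c^2B}{u_c^2}$ is indeed equivalent to symplecticity.
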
	
	
	\begin{proof}
		In the chart where $z_0\neq 0$, the Fubini-Study symplectic form in affine coordinates $(z_1,z_2)$ is given by $\omega_{FS} = -\frac{1}{4}dd^{\C}(\log(1+|z_1|^2+|z_2|^2))$. Letting $z_j=x_j+iy_j$, and expanding out this $2$--form, we can calculate that		
		$$\Phi_c^*(\omega_{FS}) = (1-c^2)dx_1\wedge dy_1 +c^2 \omega_{FS}.$$
		Therefore 
		$$(\Phi_c\circ g)^*(\omega_{FS}) = g^*(\Phi_c^*\omega_{FS}) = g^*((1-c^2)dx_1\wedge dy_1+c^2\omega_{FS}) = (1-c^2)g^*(dx_1\wedge dy_1)+c^2g^*\omega_{FS}.$$
		
		Similarly in the chart where $z_1\neq 0$, in affine coordinates $(z_0,z_2)$, we find 
		$$(\Phi_c\circ g)^*(\omega_{FS}) = (1-c^2)g^*(dx_0\wedge dy_0)+c^2g^*\omega_{FS}.$$
		We will explain the proof in the $(z_1,z_2)$ coordinate chart, but replacing the subscript $1$ with the subscript $0$ verifies the conditions in the other chart.
		
		If $g$ is braided, $g^*(dx_1\wedge dy_1)\geq 0$. (This is a rephrasing of the condition that at each point $g$ is either positively tangent to the fibers of $\pi$ or $d(\pi\circ g)=0$.)
		
		Part (1) then follows at points where $g$ is an immersion because $g^*(dx_1\wedge dy_1)\geq 0$ and $g^*\omega_{FS}>0$ implies $(\Phi_c\circ g)^*(\omega_{FS})>0$. At cusp singularities where $g^*\omega_{FS}=0$, this argument does not apply. However, if $p\in S$ is a cusp point of $Q$, the limiting tangent space to $\Phi_c(Q)$ at $\Phi_c(g(p))$ is $d\Phi_c(T)$ where $T$ is the limiting tangent space to $Q$ at $g(p)$. Then 
		$$\omega_{FS}|_{d\Phi_c(T)} = \Phi_c^*(\omega_{FS})|_T = ((1-c^2)dx_1\wedge dy_1 +c^2\omega_{FS})|_T$$
		This is positive because  $(dx_1\wedge dy_1)|_T>0$ (since the cusp is positively transverse to the fibers of $\pi$) and $\omega_{FS}|_T>0$ (since $Q$ was symplectically braided).
		
		The hypothesis in part (2) that $g$ is positively transverse to the fibers of $\pi$ is equivalent to saying that $g^*(dx_1\wedge dy_1)>0$.
		Let $A$ denote a fixed area form on $S'$. By compactness and the fact that $dx_1\wedge dy_1>0$ on the tangent spaces to $g(S')$, we can choose $\varepsilon>0$ and $K>0$ such that $g^*(dx_1\wedge dy_1)\geq \varepsilon A$ and $g^*(\omega_{FS}) \geq -K A$. Choosing $c$ small enough so that $c^2< \frac{\varepsilon}{\varepsilon+K}$ implies that
		$$(\Phi_c\circ g)^*(\omega_{FS}) \geq ((1-c^2)\varepsilon-Kc^2)A >0$$
		so $\Phi_c\circ g(S')$ is symplectic.
	\end{proof}
	
	In addition to the isotopies above which fix each fiber of $\pi$, we will need an isotopy which sends fibers to fibers but moves the critical values of the projection of the surface into a controlled region with respect to the trisection.
	
	\begin{lemma}\label{l:equator}
		If $Q$ is a symplectic braided surface, then it is isotopic through symplectic braided surfaces to a symplectic braided surface such that every critical value of $\pi\circ F$ lies on the equator of $\cpone$, $|z_1|=|z_0|$.
	\end{lemma}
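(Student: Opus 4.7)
The plan is to construct the desired isotopy in two phases: first rescale $Q$ in the fiber direction via the map $\Phi_c$ from Lemma~\ref{l:rescale}, and then apply a one-parameter family of fiber-preserving diffeomorphisms of $\cptwo$ whose action on the base $\cpone$ moves the critical values of $\pi|_Q$ to the equator.

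To set up, let $\{p_1,\ldots,p_n\}\subset\cpone$ be the distinct critical values of $\pi|_Q$ (images of tangencies, cusps, and nodes). I would choose a smooth isotopy $\psi_t\colon\cpone\to\cpone$ of orientation-preserving diffeomorphisms with $\psi_0=\mathrm{id}$, compactly supported in the affine chart $\{z_0\neq 0\}$, such that $\psi_1(p_i)$ are distinct points on the equator $\{|z_0|=|z_1|\}$ and such that $\{\psi_t(p_i)\}_{i=1}^n$ remain distinct throughout; this is possible because the equator is a circle and the target points can be reached along disjoint embedded paths. Then lift to a fiber-preserving diffeomorphism $\Psi_t$ of $\cptwo$ by setting $\Psi_t(z_1,z_2)=(\tilde\psi_t(z_1),z_2)$ in the affine chart $\{z_0\neq 0\}$, extending by the identity using the compact support of $\tilde\psi_t$; by construction $\Psi_t$ restricts to the identity on each fiber of $\pi$ and satisfies $\pi\circ\Psi_t=\psi_t\circ\pi$.

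For $c_0>0$ to be chosen small, the proposed isotopy $s\mapsto Q_s$ for $s\in[0,1]$ has two phases: $Q_s=\Phi_{c(s)}(Q)$ for $s\in[0,1/2]$, with $c(s)$ decreasing smoothly from $1$ to $c_0$, and $Q_s=\Psi_{2s-1}(\Phi_{c_0}(Q))$ for $s\in[1/2,1]$. The first phase is through symplectic braided surfaces by Lemma~\ref{l:rescale}(1). In the second phase, the braided property is preserved because $\Psi_t$ is fiber-preserving with orientation-preserving base action, is the identity on each fiber (so the local models of tangencies, cusps, and nodes are unchanged), and keeps the special points in distinct fibers by the choice of $\psi_t$. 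The final surface $Q_1=\Psi_1(\Phi_{c_0}(Q))$ then has $\pi$-critical values at $\psi_1(p_i)$, which lie on the equator as required.

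The hard part will be verifying that $Q_s$ is symplectic for $s\in[1/2,1]$ when $c_0$ is chosen small enough. A direct computation of $(\Psi_t\circ\Phi_{c_0})^*\omega_{FS}$ on tangent planes of $Q$ should decompose it into a horizontal contribution involving $(\det d\tilde\psi_t)\,dx_1\wedge dy_1$, which is positive since $\tilde\psi_t$ is orientation-preserving, and a fiberwise contribution of order $c_0^2$. At each tangency point, cusp, and node, the (limiting) tangent plane of $Q$ lies in the fiber direction, so the horizontal piece vanishes on it and positivity follows, independent of $c_0$, from the symplectic braided hypothesis on the fiberwise component. At transverse regular points, the horizontal piece is strictly positive and uniformly bounded below on any compact region away from the tangencies, so any negative part of the $c_0^2$ correction can be absorbed by choosing $c_0$ small, uniformly in $t\in[0,1]$ by compactness. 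The essential trick is that $\Phi_{c_0}$ damps the fiber component of tangent vectors of $Q$, so $\Psi_t$, which is not a symplectomorphism in general, significantly affects only the horizontal dominant part, on which it acts by an orientation-preserving diffeomorphism.
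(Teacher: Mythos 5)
Your outline follows the paper's strategy closely — rescale in the fiber direction via $\Phi_c$, lift an isotopy $\psi_t$ of $\cpone$ to a fiber-preserving $\Psi_t$ of $\cptwo$, and argue the composition preserves symplecticity — and indeed, since $\Phi_c$ and $\Psi_t$ commute, your isotopy $\Psi_t\circ\Phi_{c_0}$ is literally the paper's $\Phi_c\circ\Psi_t$ with the two phases traversed in the opposite order. However, the argument for symplecticity has a genuine gap, and the gap is exactly the trick the paper uses that your proposal omits.

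First, a factual error: you assert that the (limiting) tangent plane of $Q$ lies in the fiber direction at tangencies, cusps, \emph{and} nodes. This is only true at tangencies. The definition of a braided surface requires cusps to be positively transverse to the fibers, and each branch at a node is positively transverse as well, so at cusps and nodes the horizontal piece $dx_1\wedge dy_1$ is actually \emph{positive} on the (limiting) tangent plane — these points fall under your ``transverse regular point'' argument, not the fiber-direction argument. That part is easy to fix. The real problem is the transition region near a tangency. There, $\Psi_t^*(dx_1\wedge dy_1)$ restricted to $T_{q'}Q$ tends to zero (quadratically in the distance to the tangency), while the term $c_0^2\,\Psi_t^*\omega_{FS}|_{T_{q'}Q}$ has no definite sign for a generic fiber-preserving $\Psi_t$, because $\Psi_t$ is not a symplectomorphism and can tilt a $\omega_{FS}$-positive 2--plane into a non-positive one. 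In the zone where the distance to the tangency is $\lesssim c_0$, the $c_0^2$ term is dominant and potentially negative, and no choice of ``small $c_0$'' can fix this: shrinking $c_0$ also shrinks the zone where the horizontal term fails to help, but the two shrink at the same rate. The paper sidesteps this entirely by choosing $\psi_t$ to restrict to a \emph{projective unitary transformation} on small disjoint neighborhoods $U_j$ of the critical values. Then $\Psi_t$ is a genuine symplectomorphism on $\pi^{-1}(U_j)$, so $\Psi_t(Q\cap\pi^{-1}(U_j))$ is automatically symplectic there for all $t$, with no estimate needed; away from the $U_j$ the surface $\Psi_t(Q')$ is compact and transverse to the fibers, so Lemma~\ref{l:rescale}(2) produces a uniform $c$ that works for all $t\in[0,1]$. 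Your proof needs this choice of $\psi_t$ (or some equally strong control on $\Psi_t^*\omega_{FS}$ near the critical fibers) to close the gap.
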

	
	\begin{proof}
		Let $c_1,\cdots, c_n \in \cpone$ be the critical values of the projection $\pi|_Q\colon Q \to \cpone$. Choose small disjoint neighborhoods $U_j$ of $c_j$. Let $\psi_t\colon \cpone\to \cpone$ be an isotopy such that $\psi_t|_{U_j}$ is a projective unitary transformation and $\psi_1(c_j)$ lies on the equator of $\cpone$. Let 
		$$\Psi_t\colon \cptwo \setminus \{[0:0:1]\} \to \cptwo\setminus \{[0:0:1]\}$$ 
		be defined by $\Psi_t([z_0:z_1:z_2])=[\psi_t([z_0:z_1]):z_2]$. Then $\pi\circ \Psi_t = \psi_t\circ \pi$. Therefore the critical values of the restriction of $\pi$ to $\Psi_1(Q)$ all lie on the equator of $\cpone$. Also, $\Psi_t(Q)$ is a braided surface for all $t$. Finally, $\Psi_t$ is a symplectomorphism on the neighborhoods $\pi^{-1}(U_j)$ of the critical fibers because it is a unitary transformation there.
		
		It is possible that $\Psi_t(Q)$ will fail to be symplectic for some $t>0$, but we do know that $\Psi_t(\pi^{-1}(U_j)\cap Q)$ will be symplectic for each $j$. Let $Q_j=Q\cap \pi^{-1}(U_j)$ and let $Q' = Q\setminus (\cup_j Q_j)$. Since $\Psi_t$ is fiber preserving, $\Psi_t(Q')$ is a compact surface which is everywhere transverse to the fibers of $\pi$. Then by Lemma~\ref{l:rescale}(2), there exists a sufficiently small value of $c_t>0$ such that $\Phi_{c_t}(\Psi_t(Q'))$ is symplectic. Since $[0,1]$ is compact, we can choose a uniform $0<c\leq 1$ such that $\Phi_c(\Psi_t(Q'))$ is symplectic for all $t\in[0,1]$. By Lemma~\ref{l:rescale}(1), $\Phi_c(\Psi_t(Q_j))$ is also symplectic because $\Psi_t(Q_j)$ is symplectic. Therefore $\Phi_c\circ \Psi_t$ is a symplectic isotopy from $\Phi_c(Q)$ to $\Phi_c\circ \Psi_1(Q)$. Additionally, $\Phi_{1+t(c-1)}(Q)$ is a family of symplectic braided surfaces for $t\in[0,1]$, interpolating between $Q$ and $\Phi_c(Q)$. Concatenating these yields the required isotopy from $Q$ to $\Phi_c(\Psi_1(Q))$.
	\end{proof}
	
	Recall the genus one Weinstein trisection $\TT_0$ for $\cptwo$ described in Section~\ref{sec:Intro} and shown schematically in the moment map image of the standard toric action in Figure~\ref{fig:toric}.
	In affine coordinates $(z_1,z_2)$, the preimage of equator of $\cpone$ is $\{|z_1|=1\}$. Thus, the handlebody
	$H_2=\{|z_1|=1,|z_2|\leq1\}$ is contained in the preimage of the equator.
	
	\begin{lemma}\label{l:bridgepos}
		Let $Q=g(S)$ for $g\colon S\to \cptwo$ be a symplectic braided surface, such that the critical values of $\pi\circ g$ lie on the equator of $\cpone$. Then there exist neighborhoods $\{V_j\}$ of the critical values of $\pi\circ g$ and a map $\phi\colon \cpone\to \R_{>0}$, which is constant on each $V_j$, such that the induced fiber preserving diffeomorphism $\Phi$ has $\Phi(Q)$ in singular bridge position with respect to the standard trisection on $\cptwo$.
	\end{lemma}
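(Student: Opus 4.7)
The plan is to exploit the fact that, with all critical values of $\pi\circ g$ on the equator $E$ of $\cpone$ (and, applying Lemma \ref{l:equator} to all special points, the cusps and nodes as well), the surface $Q$ restricts to a smooth, unbranched $n$-sheeted covering over each open hemisphere $D^{\pm}$. Hence $Q \cap \pi^{-1}(D^{\pm})$ is a disjoint union of $n$ smooth disk sections $\{z_2 = h_i^{\pm}(z_1)\}$. The fiber-preserving scaling $\Phi$ moves each fiber radially by $\phi$, giving complete control of the fiberwise modulus $|\phi(z_1) h_i^{\pm}(z_1)|$, and the goal is to choose $\phi$ pushing each sheet entirely into a single sector of $\TT_0$.

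First I would fix pairwise disjoint disk neighborhoods $V_j \subset \cpone$ of each special value $c_j$ on the equator and set $\phi \equiv \phi_j$ constant on each $V_j$. After a small equisingular symplectic perturbation of $Q$ (permitted by Lemma \ref{l:sympliso}), I may assume each tangency, cusp, or node over $c_j$ has $z_2$-coordinate $z_j^* \neq 0$; I then choose $\phi_j$ sufficiently large that $|\phi_j z_j^*| > 1$, placing the entire special-point piece of $\Phi(Q)$ over $V_j$ inside $Z_3$. This is crucial: it ensures $\Phi(Q) \cap H_2$ is a smooth $1$-manifold (a tangency lying in $H_2$ would produce a singular tangential intersection), it realizes each tangency cap as a smooth disk in $\cD_3$ via the local U-turn $z_2^2 = z_1 - c_j$, and it realizes each cusp or node as a cone-on-knot component of $\cD_3$.

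Over the complement of $\bigcup V_j$, I would extend $\phi$ so that over each hemisphere each smooth sheet is pushed uniformly into a single sector: either $\phi(z_1) |h_i^{\pm}(z_1)| < |z_0|$ everywhere (a ``low'' sheet contributing a disk to $\cD_1$ or $\cD_2$) or $\phi(z_1)|h_i^{\pm}(z_1)| > |z_0|$ everywhere (a ``high'' sheet contributing a disk to $\cD_3$). The partition of sheets is forced by the constraint that each sheet meeting a special-point piece must be high, since that piece already lies in $Z_3$. Concretely, on each hemisphere I define $\phi = 1/\mu$ for a smooth positive function $\mu$ that pointwise separates the heights of the low and high sheets, smoothly interpolating with the boundary value $1/\phi_j$ along $\partial V_j$.

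The main obstacle is showing that such a separating function $\mu$ exists: the partition of sheet indices prescribed by the special-point structure must actually be realizable as a pointwise separation of the $|h_i^{\pm}(z_1)|$. I would handle this by applying a further generic fiber-preserving perturbation of $Q$ (via Lemma \ref{l:rescale}(2)) off the $V_j$'s to spread the values $|h_i^{\pm}|$ apart, allowing the required $\mu$ to be chosen. Finally, triviality of each resulting tangle $\cT_\lambda$ follows because each disk component of $\cD_\lambda$ is by construction the graph of a smooth section over a contractible hemispheric region, hence $\partial$-parallel in $Z_\lambda$; its boundary arcs on $H_\lambda$ therefore cobound disks in $H_\lambda$ with arcs on $\Sigma$, making $\cT_\lambda$ trivial.
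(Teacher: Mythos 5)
The paper's proof proceeds from the opposite starting point: it first scales $Q$ \emph{down} (small constant $c$) so that $Q\cap Z_3 = \emptyset$, so the entire singular braided surface sits over the equator inside $H_2 = \{|z_1|=1, |z_2|\leq 1\}$. It then performs a fiber-preserving isotopy that pushes a small neighborhood of \emph{each singular point and each braid crossing} up into $Z_3$, and verifies via explicit braid-movie figures (Figures \ref{fig:tangency}--\ref{fig:cuspgamma}) that the resulting $\cT_1,\cT_2,\cT_3$ are trivial tangles and the $\cD_\lambda$ are singular disk-tangles. Your proposal is a genuinely different global strategy --- sort whole hemispheric sheets into ``low'' and ``high'' sectors --- but I do not think it works as written, for the following reasons.

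First, the sheet sorting is inconsistent. You require $\phi\equiv\phi_j$ (large) on all of $V_j$ so that the special-point piece lies in $Z_3$, and you simultaneously require each smooth sheet to be pushed \emph{uniformly} into a single sector over $D^\pm\setminus\bigcup V_j$. But $\phi$ is continuous: a sheet declared ``low'' over $D^-\setminus V_j$ has boundary value $\phi_j\cdot |h(w_1)|$ for $w_1\in\partial V_j$, which is generally $>1$ once $\phi_j$ is large. So the sheet is already high at $\partial V_j$, contradicting uniformity. In the conic example (degree two, two tangencies), both sheets pass through both tangencies, so the ``forced-high'' constraint would push \emph{all} of $Q$ into $Z_3$, producing zero bridge points rather than bridge position. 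What actually has to happen (and what the paper does) is to push only \emph{small} neighborhoods of the special points into $Z_3$ while the bulk of each sheet stays in $Z_1\cup Z_2$ --- the sheets cannot be assigned to a single sector globally.

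Second, and independent of the above, you do not address the braiding of $Q\cap H_2$. Even with all singularities removed to $Z_3$, the strands over the equator form a braid whose diagram generically has crossings, and pushing only the singular models into $Z_3$ does not make that braid trivial as a tangle. The paper explicitly inserts a ``trivial movie'' near \emph{every crossing}, pushing those neighborhoods into $Z_3$ as well; this is an essential step, and it has no counterpart in your proposal. Your claim that the tangle arcs ``cobound disks in $H_\lambda$ with arcs on $\Sigma$'' because the ambient patches are $\partial$-parallel disks in $Z_\lambda$ is not valid: $\partial$-parallelism of the $2$--dimensional patches in a $4$--dimensional sector does not imply simultaneous $\partial$-parallelism of the $1$--dimensional tangle arcs in the $3$--dimensional handlebody. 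Finally, the appeal to Lemma~\ref{l:rescale}(2) to ``spread the values $|h_i^\pm|$ apart'' is off-target --- that lemma says a constant rescaling can make a braided surface symplectic, and has nothing to do with separating the moduli of the sheets.
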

	
	\begin{proof}
		
		We begin by specifying the construction of $\phi$ and the induced $\Phi$ for a given braided surface $Q$. In the second part of the proof, we verify that $\Phi(Q)$ is in singular bridge position.
		
		We assume through a $C^1$-small symplectic isotopy that $Q$ is disjoint from the core of $H_2$ where $|z_1|=|z_0|$ and $|z_2|=0$ (general position). Let $A$ be an open neighborhood of the equator of $\cpone$ (where $|z_0|=|z_1|$) such that every point in $Q\cap \pi^{-1}(A)$ has $|z_2|\neq 0$. We assume $A$ is disjoint from open neighborhoods of the poles $[0:1]$ and $[1:0]$.
		
		As a reference, we will utilize two functions, $f_0:\cptwo\setminus \{z_0=0\} \to \R$ and $f_1:\cptwo\setminus\{z_1=0\}\to \R$ defined by 
		$$f_0([z_0:z_1:z_2]) = |z_2|/|z_0| \qquad \text{ and } \qquad f_1([z_0:z_1:z_2])=|z_2|/|z_1|.$$
		Let $L=\{[z_0:z_1:z_2]\in \cptwo \mid |z_1|\leq |z_0| \}$ and $R= \{[z_0:z_1:z_2]\in \cptwo \mid |z_1|\geq |z_0| \}$. Notice that $H_1= f_0^{-1}(1) \cap L$, and $H_3 = f_1^{-1}(1) \cap R$. Equivalently, the function 
		$$f=\min\{f_0,f_1\}$$ 
		agrees with $f_0$ on $L$ and with $f_1$ on $R$, so $f^{-1}(1)=\partial Z_3$. (Note that $f$ can be thought of as a well-defined continuous function on $\cptwo\setminus\{[0:0:1]\}$ because $f_0$ is well-defined on $L\setminus\{[0:0:1]\}$ and $f_1$ is well-defined on $R\setminus\{[0:0:1]\}$.)
		
		Since $Q$ is braided, it is disjoint from $[0:0:1]$. Let $\delta>0$ be a sufficiently small constant such that $f_0|_{Q\cap L} < 1/\delta$ and $f_1|_{Q\cap R} <1/\delta$. Since $Q\cap L$ and $Q\cap R$ are compact, such a $\delta$ exists. Observe that $\Phi_\delta(Q)$ is then completely contained in $Z_1\cup Z_2$. Note that if all the critical points of $Q$ lie above the equator of $\cpone$, they will all be contained in $H_2$. Away from the critical points, $Q$ intersects $H_2$ transversally, since each meridional disk of $H_2$ is contained a fiber of $\pi$. Therefore, away from the singular fibers, $Q\cap H_2$ is a braid, transverse to the meridional disks.
		
		Now let $U_1,\ldots, U_n\subset A$ be disjoint regular open neighborhoods in $\cpone$ of the critical values $v_1,\ldots, v_n$ of $\pi\circ g$. Note that it is possible to choose these neighborhoods inside of $A$ because the critical values are assumed to occur along the equator. 
		
		Because $U_i\subset A$, when we are focusing on $\pi^{-1}(U_i)$, we can work in the affine chart where $z_0=1$ with affine coordinates $(z_1,z_2)$. Note that in these coordinates $\pi(z_1,z_2)=z_1$, so $z_1$ is a parameter on the $U_i$. We will use polar coordinates where $z_1=r_1e^{i\theta_1}$ and $z_2=r_2e^{i\theta_2}$. By our assumption on $A$, the points on $Q\cap \pi^{-1}(U_i)$, have $r_1\neq 0$ and $r_2\neq 0$. Therefore, $\theta_1$ and $\theta_2$ are well-defined coordinates at all points on $Q\cap \pi^{-1}(U_i)$.
		We take a moment now to understand how these coordinates interact with the picture of $Q$, using the fact that $Q$ is braided, and will potentially shrink our choice of $U_i$ using this information. 
		
		\begin{figure}
			\centering
			\includegraphics[scale=.7]{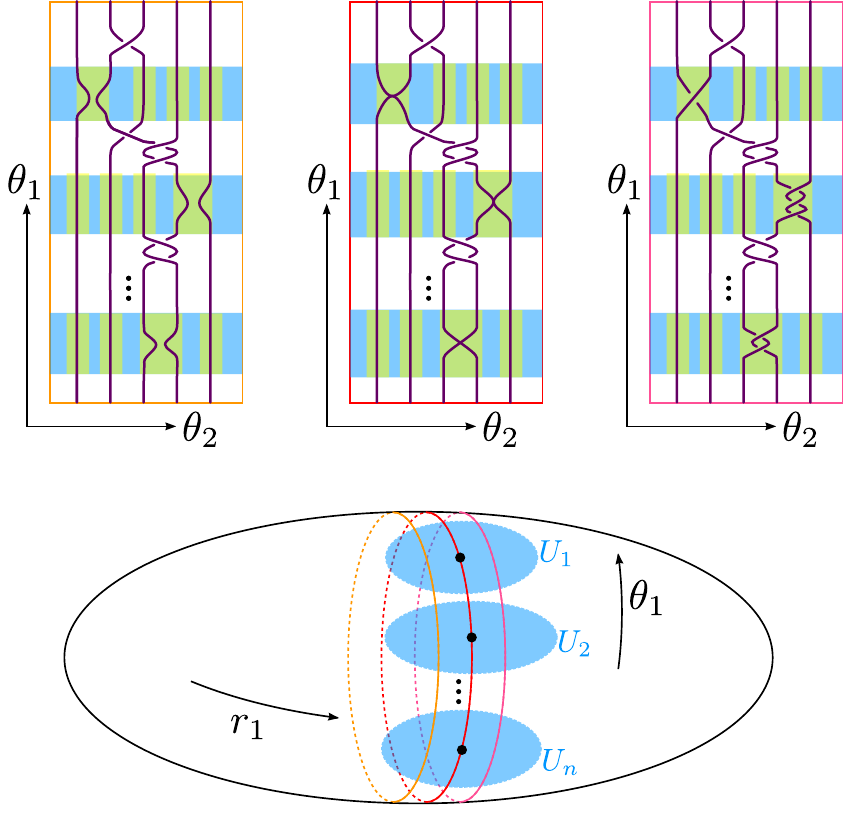}
			\caption{On the bottom we see $\cpone$ parameterized by coordinates $r_1,\theta_1$, containing the neighborhoods $U_i$ of the critical values $v_i$. On the top we look at three slices where $r_1=1-\varepsilon$, $r_1=1$ and $r_1=1+\varepsilon$. The braided surface $Q$ intersects these slices in a braid (with singularities when $r_1=1$). The $r_2$ coordinate is projected out to obtain the braid diagram (but crossings data is included to indicate relative heights), and opposite sides of each rectangle are identified to form a torus. The preimage of each $U_i$ appears as a blue band in each of these slices. The yellow highlighted rectangles inside the $U_i$ bands correspond to points in $B_{k}$ where $\theta_2\in (a_{i,k},b_{i,k})$ and $r_1e^{i\theta_1}\in U_i$. }
			\label{fig:braidcoords}
		\end{figure}
		
		Consider Figure~\ref{fig:braidcoords}. If we fix a value of $r_1$, the intersection of $Q$ with this slice where $r_1=c_r$ is a braid (transverse to the $\theta_1=c_\theta$ slices), except at the critical points which all lie in the slice $r_1=1$. We show these slices in Figure~\ref{fig:braidcoords} where we have projected out the $r_2$ coordinate. The intersection of $\pi^{-1}(U_i)$ with these slices appear as annular bands in the $(\theta_1,\theta_2)$ tori.

		By our choice of $U_i$, $\pi^{-1}(U_i)\cap Q$ includes at least one connected component which is a neighborhood of a cusp, node, or tangency in $Q$, together with other disjoint smooth disks corresponding to the other strands of the braid not involved with the critical point. Focusing in on a neighborhood of the cusp, node, or tangency, we can see this portion of $Q$ as a movie traced out by frames $\{r_1=c_r\}$ for $c_r\in (1-\eta, 1+\eta)$. 
		For each type of singularity, there is a different local movie of braids (where $c_r$ is the time direction of the movie) that becomes singular in $H_2 \subset \{r_1=1\}$.		
		At a tangency, a positive half-twist (single crossing) is added to the braid after passing through the tangency point -- Figure \ref{fig:tangency}. At a positive node, a full positive twist is added between two strands of the braid -- Figure \ref{fig:node}. Similarly, a negative node introduces a full negative twist. At a cusp, three positive half-twists are added to the braid after passing through the cusp -- Figure \ref{fig:cusp}. These all follow from calculating the braid monodromy directly from the complex local models for these three singularity types.
		
		Now, keeping Figure~\ref{fig:braidcoords} in mind, we will shrink the $U_i$, such that it is possible to choose disjoint intervals $(a_{i,k},b_{i,k})$, such that 
		$$\{ (z_1,z_2)\in Q \mid z_1\in U_i,\; \theta_2\in (a_{i,k},b_{i,k})\}$$
		contains a single connected component of $Q\cap \pi^{-1}(U_i)$ and is disjoint from the other components. In particular, the $U_i$ must be sufficiently small so that the $U_i$ bands in the constant $r_1$ slices avoid any braid crossings which are unrelated to the singularities.
		Note that to achieve this, we must assume that $Q$ is in generic position, which can be achieved by a $C^1$-small symplectic isotopy of $Q$ at the beginning.
		
				\begin{figure}
					\centering
					\includegraphics[scale=.3]{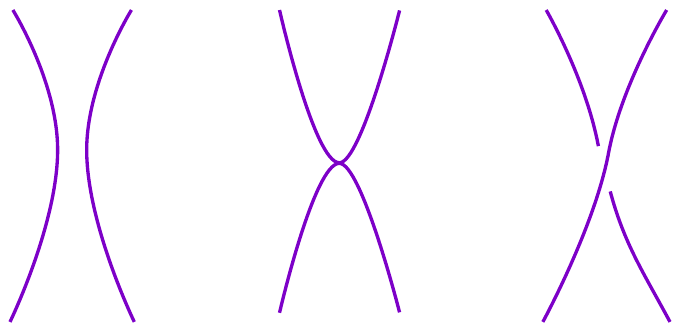}
					\caption{The movie showing a braided surface near a tangency.}
					\label{fig:tangency}
				\end{figure}
				
				\begin{figure}
					\centering
					\includegraphics[scale=.3]{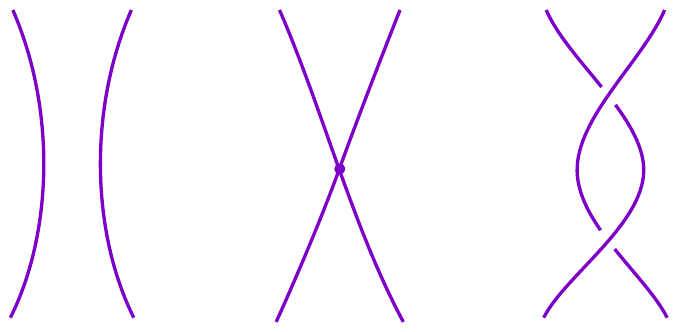}
					\caption{The movie showing a braided surface near a positive node.}
					\label{fig:node}
				\end{figure}
				
				\begin{figure}
					\centering
					\includegraphics[scale=.3]{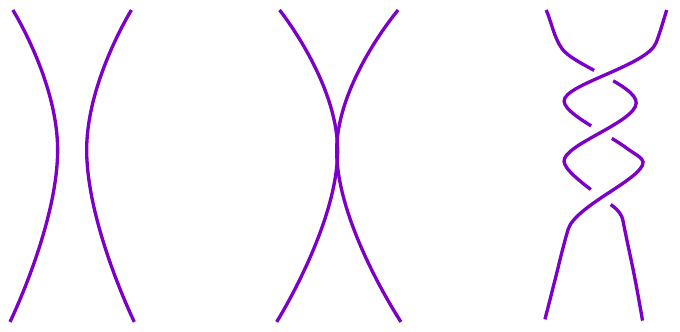}
					\caption{The movie showing a braided surface near a cusp.}
					\label{fig:cusp}
				\end{figure}

		Let $V_i$ be a regular open neighborhood of the critical value of $\pi\circ g$, $v_i$ whose closure is a disk contained in $U_i$. 
		We adjust these neighborhoods such that the functions $f_j|_{\pi^{-1}(U_i\setminus V_i)\cap Q}$, for $j=0,1$ have no critical points (again, we assume $Q$ is in generic position to ensure such critical points are isolated from the beginning by a $C^1$ small symplectic isotopy of $Q$). This concludes the adjustments of the neighborhoods $U_i$ and $V_i$.		
		
		Next, for each $i=1,\dots, n$, choose constants $K_i>1$ such that 
		$$f|_{\pi^{-1}(\overline{V_i})\cap Q}>\frac{1}{\delta K_i}.$$
		Let $\phi:\cpone\to \R_{>0}$ be a smooth function which is identically $\delta K_i$ on $\overline{V}_i$ and identically $\delta$ outside of $\cup_{i=1}^n U_i$, with no critical points in $U_i\setminus \overline{V}_i$. Let $\Phi:\cptwo\setminus\{[0:0:1]\}\to \cptwo\setminus\{[0:0:1]\}$ be the fiber preserving map induced by $\phi$ as above.
		
		We will next look at $\Phi(Q)$ and check it is in singular bridge position with respect to the standard trisection of $\cptwo$. Note that $\Phi(Q)$ is isotopic to $Q$ through a family of singular surfaces $\Phi_t(Q)$ where $\Phi_t$ is the fiber preserving map induced by $\phi_t=t\phi+1-t$.
		
		We have illustrated the effect of the fiber preserving map $\Phi$ on $Q$ near a tangency in Figure~\ref{fig:isotopetangency}. Note that the fiber preserving map $\Phi$ allows us to modify the $z_2$ radial coordinate; this direction is depicted as vertical within each frame, while the coordinate $r$ varies as you move from one box to another horizontally. The top line shows $Q$ before applying $\Phi$, while the bottom line shows $\Phi(Q)$.
		
		\begin{figure}
			\centering
			\includegraphics[width=\textwidth]{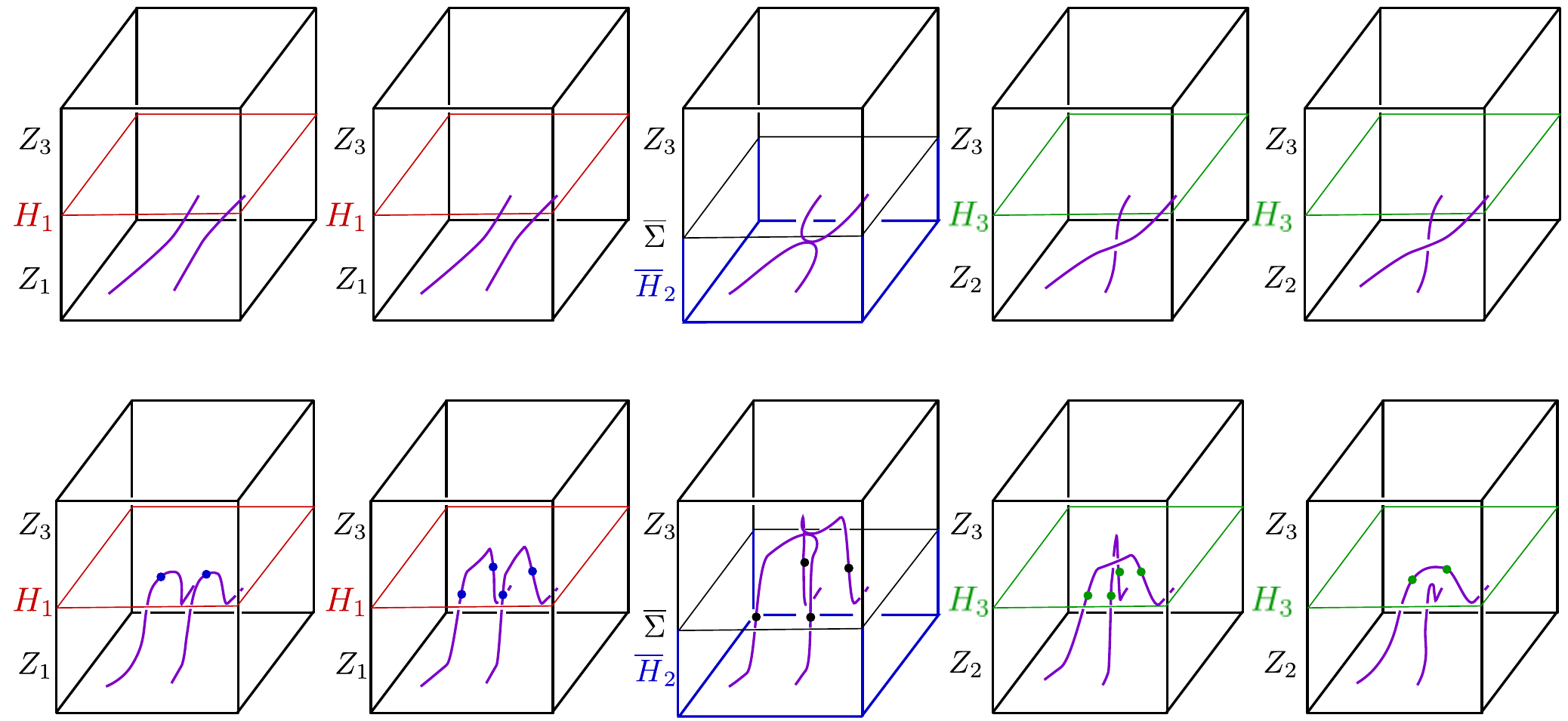}
			\caption{A fiber preserving isotopy of a neighborhood of a tangency point of $Q$ with the fibers of the pencil $\pi$. The top row shows the local portion of the surface at the beginning of the isotopy, and the bottom row shows the local portion of the surface at the end of the isotopy.}
			\label{fig:isotopetangency}
		\end{figure}
		
		Replacing the braid movie in Figure~\ref{fig:isotopetangency} with the braid movies for the node and cusp models (Figures~\ref{fig:node} and~\ref{fig:cusp}) would give the corresponding movie depicting $\Phi(Q)$ near each of these singularities.
		
		To verify that $\Phi(Q)$ is in singular bridge position with respect to the standard trisection of $\cptwo$, we first check that $\Phi(Q)\cap Z_\lambda$ is a singular disk tangle in $Z_\lambda$ for $\lambda \in\{1,2,3\}$. Then we will check that $\Phi(Q)\cap H_\lambda$ is a trivial tangle in $H_\lambda$ for $\lambda\in\{1,2,3\}$.
		
		First we look at $\Phi(Q)\cap Z_3$. We need to show that this forms a singular disk tangle. We will enclose each connected component of $\Phi(Q)\cap Z_3$ in a ball in $Z_3$ which has half of its boundary on $\partial Z_3$. Notice that by the choice of $\delta$, $\Phi(Q)\cap Z_3\subset \pi^{-1}(\cup_{i=1}^n U_i)$, and by choice of $K_i$, $\Phi(Q)\cap \pi^{-1}(\overline{V}_i)\subset Z_3$. Furthermore, $f(z_1,z_3)< K_i$ for $(z_1,z_2)\in \Phi(Q)\cap \pi^{-1}(U_i)$ because $\phi\leq \delta K_i$ on $\pi^{-1}(U_i)$ and $f<\delta$ on $Q$. Let 
		$$B_{i,k} = \{ (z_1,z_2=r_2e^{i\theta_2}) \mid z_1\in U_i, \, \theta_2\in (a_{i,k},b_{i,k}), \, r_2\leq K_i, \, f(z_1,z_2)\geq 1 \}.$$
		Then $B_{i,k}$ is a $4$-ball contained in $Z_3$ (with half of its boundary on $\partial Z_3$), which contains a single component of $\Phi(Q)\cap Z_3$. Furthermore, $\Phi(Q)\cap Z_3$ is contained in the union of all the $B_{i,k}$. We will verify that for each $(i,k)$, $\Phi(Q)\cap B_{i,k}$ is either a boundary parallel disk with unknotted boundary or a singular disk which is the cone on a trefoil or a Hopf link in the boundary. Because $\phi$ is constant (equal to $\delta K_i$) on $\overline{V}_i$, the braided structure of $Q$ in $\cptwo$ determines the topology of the disk and its boundary link for $\Phi(Q) \cap \pi^{-1}(\overline{V}_i)\subset B_{i,k_i}$. By our assumptions on the braided structure, for one $k_i$, there is a critical point of $\pi$ which is either a tangency, a node, or a cusp.  In the case of a tangency, $\Phi(Q)\cap \pi^{-1}(\overline{V}_i)\cap B_{i,k_i}$ is a boundary parallel disk with unknotted boundary, as can be demonstrated by the $3$-ball shown in slices in Figure~\ref{fig:tangency} with half its boundary on $\Phi(Q)$ and the other half a disk in $\partial( \overline{V}_i\cap B_{i,k_i})$. In the cusp case, $\Phi(Q)\cap \pi^{-1}(\overline{V}_i)\cap B_{i,k_i}$ is a cone on the trefoil and in the node case it is the cone on a Hopf link. For other values of $k\neq k_i$, $\Phi(Q) \cap \pi^{-1}(\overline{V}_i)\subset B_{i,k}$ will just be a trivial disk with unknotted boundary (a $3$-ball with half its boundary on $\Phi(Q)$ and half a disk on the boundary of $\pi^{-1}(\overline{V}_i)\cap B_{i,k}$ can easily be constructed by taking a trivial movie of $2$-disks with the analogous property in the trivial movie of $\Phi(Q)$ in $\pi^{-1}(\overline{V}_i)\cap B_{i,k}$ where $\Phi(Q)$ is just a single unknotted arc in each frame). Because we assumed that $\phi:\cpone \to \R_{>0}$ has no critical points in $U_i\setminus \overline{V}_i$, and that $f_j|_Q$ for $j=0,1$ has no critical points in $\pi^{-1}(U_i\setminus V_i)$, the cobordism from the boundary of $\Phi(Q)\cap \pi^{-1}(\overline{V_i})\cap B_{i,k}$ to the boundary of $\Phi(Q)\cap B_{i,k}$ is trivial. Therefore $\Phi(Q)\cap Z_3$ is a singular disk tangle.
		
		Next we look at $\Phi(Q)\cap Z_1$ and $\Phi(Q)\cap Z_2$. Note that all of the critical points of $\Phi(Q)$ as a braided surface are contained in $Z_3$. This means that $\pi:Q\to \cpone$ has no critical points in $Z_1$ or $Z_2$. On $Z_1$, which is contained in the subset of $\cptwo$ where $z_0\neq 0$, we use coordinates $(z_1,z_2)$. In these coordinates $\pi(z_1,z_2)=z_1$. We can further project to the radial component $r_1=|z_1|$ by the function $\pi_{r_1}$. Then $\pi_{r_1}:\Phi(Q)\to \R$ has no critical points except where $\Phi(Q)$ intersects $\{z_1=0\}$, where is has index $0$ critical points. This almost shows that $\Phi(Q)\cap Z_1$ is a trivial disk tangle in $Z_1$, except that part of the boundary of $Z_1$ is not cut out by a level set of $r_1$, but rather a level set of $r_2$. We check that wherever $\Phi(Q)$ approaches this $H_1$ part of $\partial Z_1$, that the projection to $r_2$ has no critical points. We know that $\Phi(Q)$ can only intersect the boundary of $Z_1$ inside of the union of the $\pi^{-1}(U_i)$ since $\phi\equiv \delta$ outside the $U_i$. Because $\phi$ and $f_0|_Q$ have no critical points in $U_i\setminus \overline{V_i}$, and $\Phi(Q)\cap \pi^{-1}(U_i) \cap Z_1\subset \pi^{-1}(U_i\setminus \overline{V_i})$, we see that $f_0|_{\Phi(Q)}$ has no critical points in $Z_1\cap \pi^{-1}(U_i)$. Therefore $\Phi(Q)\cap Z_1$ is a trivial disk tangle in $Z_1$. We can make a similar argument for $\Phi(Q)\cap Z_2$, using coordinates $(z_0,z_2)$ instead of $(z_1,z_2)$ and $\pi_{r_0}$ instead of $\pi_{r_1}$, to see that $\Phi(Q)\cap Z_2$ is a trivial disk tangle in $Z_2$.		
		
		Now that we have checked we have singular disk tangles in each of the three $4$-dimensional sectors, we just need to check that the intersections of $\Phi(Q)$ with the $3$-dimensional handlebodies $H_1$, $H_2$, and $H_3$ are trivial tangles. We start with $H_2$, which lies above $\{|z_1|=1\}$. The intersection of $Q$ with $H_2$ is represented in the center diagram at the top of Figure~\ref{fig:braidcoords}. $\Phi(Q)\cap H_2$ differs from $Q\cap H_2$ by deleting the portions of the braid in $\pi^{-1}(U_i)$ which get pushed out of $H_2$ into $Z_3$ by $\Phi$. From this, we see that $\Phi(Q)\cap H_2$ is a union of braided tangles which are trivial because the braiding can be undone by an isotopy of the endpoints of the tangle in the boundary of $H_2$. Note that this assumes that there is at least one $U_i$ where $Q$ is pushed up by $\Phi$ (if there are no critical points of $\pi$ on $Q$, we can still choose a trivial piece on which to let $\Phi$ push $Q$ up into $Z_3$).
		
		Next we check that $\Phi(Q)\cap H_1$ and $\Phi(Q)\cap H_3$ are trivial tangles. To understand these tangles, we use the perspective of the movie with $\{r_1=c_r\}$ frames, and see how $\Phi(Q)$ intersects $H_1$ or $H_3$ in each frame. Note that this movie slices $H_1$ or $H_3$ into concentric tori shrinking down towards the core of the solid torus (the core is avoided by $\Phi(Q)\cap H_i$). $\Phi(Q)$ intersects each of these concentric tori in a finite collection of points which are contained in $\cup_i \pi^{-1}(U_i)$. We can track the movie traced out by these points to see the relevant tangle. We draw these tangles for each of the models (tangency, node, cusp, or trivial) in Figure~\ref{fig:tangle}.
		
		\begin{figure}
			\centering
			\includegraphics[scale=.5]{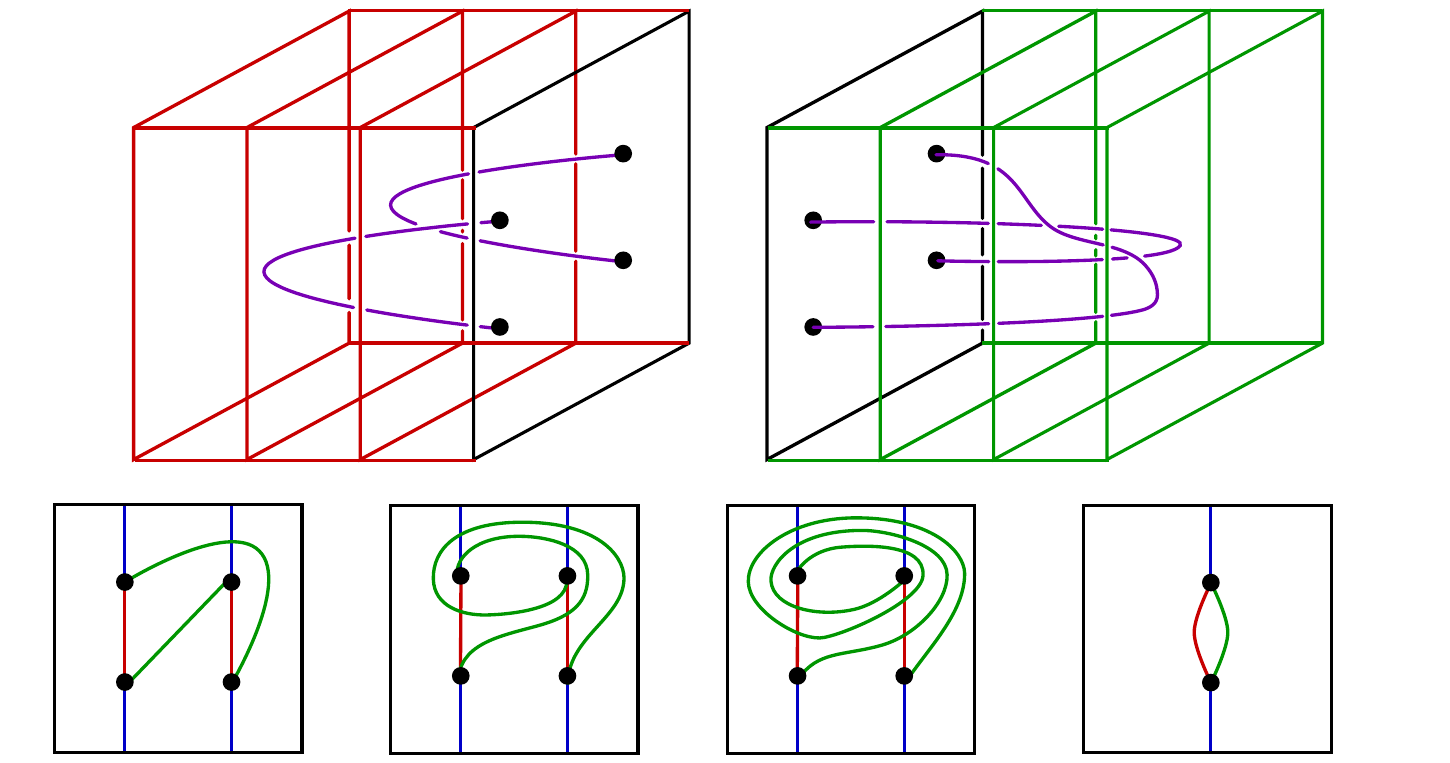}
			\caption{(Top) The pieces of the tangles in $\overline{H}_1$ and $H_3$ after isotoping $Q$ near a tangency. (Note that $\partial Z_3 = \overline{H_1}\cup_\Sigma H_3$.) Gluing these pieces together gives an unknot. (Bottom, left to right) The projections of the corresponding tangles in $\Sigma$ for tangencies, nodes, cusps, and trivial pieces respectively. Note that in this figure $\Sigma$ is the oriented boundary of $H_3$ and the oppositely oriented boundary of $\overline{H}_1$.}
			\label{fig:tangle}
		\end{figure}
		
		For a tangency, node, node or cusp, in the movie of points in the tori $H_1\cap \{r_1=c_r\}$ for $c_r\leq1$ or $H_3\cap \{r_1=c_r\}$ for $c_r\geq 1$, we see the births of two pairs of points. The track of these births in each handlebody is a pair of arcs. For a trivial model, a single pair of points is born which track out a single arc. The track of these births on $\Sigma$ is the projection of these arcs to the torus. Because arcs of the same color can be projected up to isotopy as embeddings, the tangles $Q\cap H_1$ and $Q\cap H_3$ are locally trivial. Note that for the total surface there will be many disjoint copies of these models inserted in disjoint regions of the torus.  It follows that $\Phi(Q)\cap H_1$ and $\Phi(Q)\cap H_3$ are trivial tangles.

\end{proof}
	
\textbf{Warning:} When drawing pictures and projections, one needs to be very careful to keep track of which orientation one is using on each handlebody. \emph{Our convention is that $H_\lambda$ is positively oriented as part of the boundary of $Z_\lambda$ and that $\Sigma$ is the positively oriented boundary of $H_\lambda$.} When we look at a braid in $V_r$, we think of $V_r$ as a parallel copy of $\overline{H}_2$, with boundary $\overline{\Sigma}$.  As we pass singularities, the braid obtains positive twists (when viewed in $\overline{H}_2$ which is the orientation it inherits as the boundary of $Z_1$). In particular, the natural projection of the positive braid is to $\overline{\Sigma}$. Pushing parts of this braid into $Z_3$ gradually, we trace out the intersection of $Q$ with $H_3$. We can then project this trace onto $\overline{\Sigma}$ and reflect to obtain the projection to $\Sigma$. See Figure~\ref{fig:cuspgamma} for the case of the cusp.
		
\begin{figure}
	\centering
	\includegraphics[width=\textwidth]{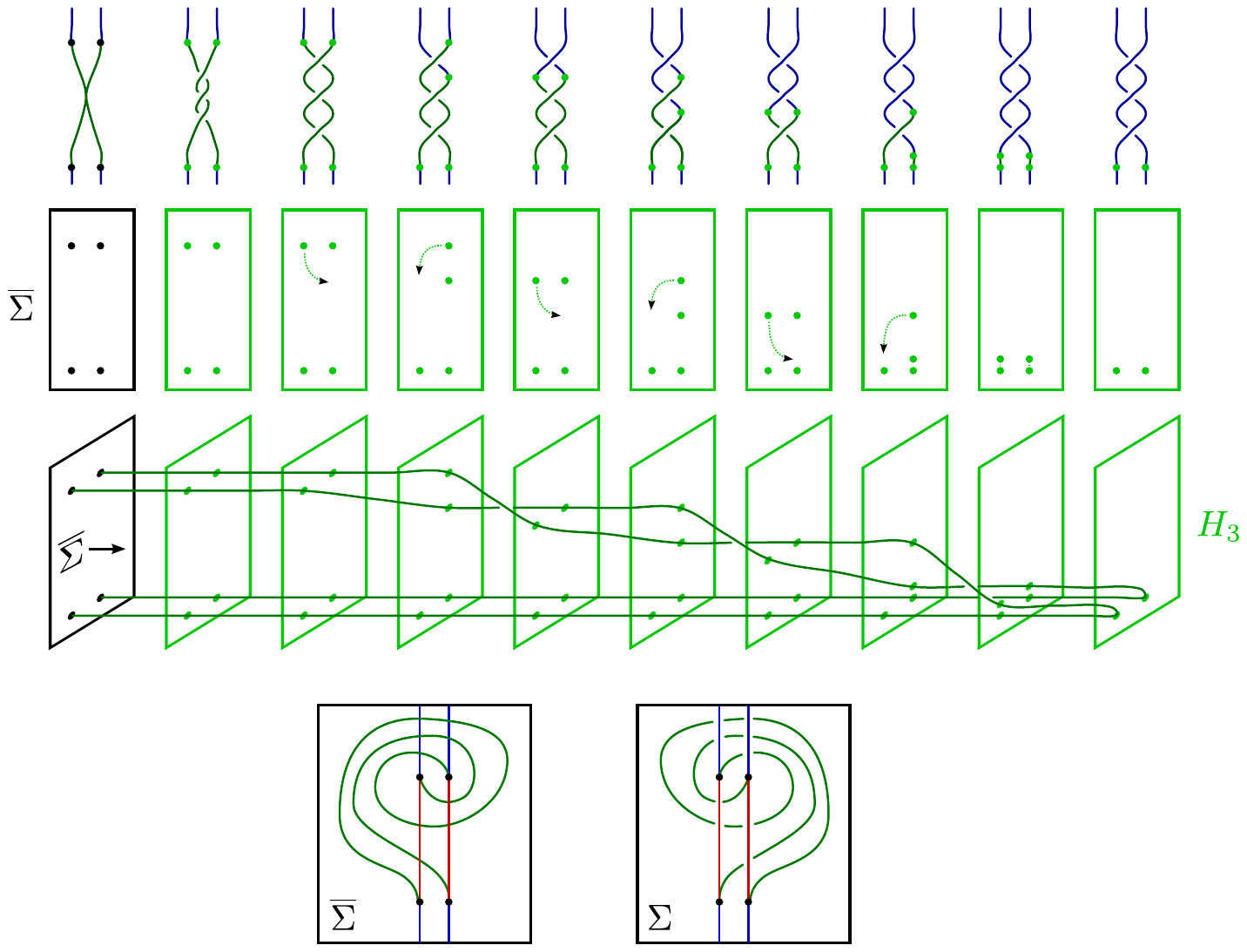}
	\caption{(Top row) Pushing the neighborhood of the cusp into $Z_3$. (Middle rows) Tracing out the intersection with $H_3$. (Bottom row) Projecting the tangle to $\overline{\Sigma}$ and its reflection~$\Sigma$.}
	\label{fig:cuspgamma}
\end{figure}

Note that when we have the projections of the arcs from $H_1$, $H_2$, and $H_3$ all on $\Sigma$, we can think of one set of arcs (say the blue arcs of $H_2$) as being isotoped to lie exactly on $\Sigma$. Pushing the red arcs slightly up and the green arcs slightly down is consistent with viewing all three sets of arcs inside a neighborhood of $\Sigma$ in $H_3\cup_\Sigma \overline{H}_1$. In particular, the knot formed by the union of the red and green arcs in the bottom-right frame of Figure~\ref{fig:cuspgamma} gives a positive trefoil with this convention. Since $H_3 \cup_\Sigma \overline{H}_1$ is the oriented boundary of $Z_3$, we see that this positive trefoil is the link of the cusp singularity in $\partial Z_3$ as expected.

	\begin{lemma}\label{l:scaletrisect}
		If $\TT_0$ is the standard trisection of $\cptwo$, then its image under the rescaling, $\Phi_c(\TT_0)$ is a genus one Weinstein trisection.
	\end{lemma}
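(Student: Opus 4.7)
Since $\Phi_c$ is a global diffeomorphism of $\cptwo$ for every $c\in(0,1]$, the image $\Phi_c(\TT_0)$ is automatically a topological trisection of genus one with sectors $\widetilde Z_\lambda := \Phi_c(Z_\lambda)$, handlebodies $\widetilde H_\lambda := \Phi_c(H_\lambda)$, and core $\Phi_c(\Sigma)$.  Only the Weinstein compatibility requires work: for each $\lambda$, I need to produce a Liouville vector field for $\omega_{FS}$ on $\widetilde Z_\lambda$ that is outwardly transverse to a smoothing of $\partial \widetilde Z_\lambda$, together with a gradient-like Morse function.

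The plan is to work in the affine chart of $\cptwo$ centered at the toric fixed point $v_\lambda$ corresponding to the sector $Z_\lambda$ (for instance, $z_0=1$ for $Z_1 = \{|z_1|,|z_2|\leq |z_0|\}$).  In this chart, $Z_\lambda$ is the unit polydisc centered at $v_\lambda$, and $\Phi_c$ acts as an anisotropic linear rescaling by $c$ or $c^{-1}$ in one of the coordinates, so $\widetilde Z_\lambda$ is again a polydisc centered at $v_\lambda$.  The key step is to identify the Liouville vector field $V$ associated to the natural Kähler primitive
\[
\eta = \tfrac{1}{2F}\bigl(x_1\,dy_1 - y_1\,dx_1 + x_2\,dy_2 - y_2\,dx_2\bigr),\qquad F = 1+|z_1|^2+|z_2|^2,
\]
of $\omega_{FS}$.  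Exploiting the $T^2$-symmetry to reduce $\iota_V\omega_{FS} = \eta$ to a point of the form $(r_1,0,r_2,0)$ and solving the resulting linear system, I expect to obtain
\[
V = \tfrac{F}{2}\bigl(x_1\partial_{x_1}+y_1\partial_{y_1}+x_2\partial_{x_2}+y_2\partial_{y_2}\bigr),
\]
i.e.\ a positive scalar multiple of the radial vector field.  This $V$ has a unique zero of Morse index $0$ at $v_\lambda$, and $\phi := \log F$ is a natural gradient-like Morse function since $d\phi(V) = |z_1|^2+|z_2|^2$ is positive away from $v_\lambda$.

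Because $V$ is a positive multiple of the radial field on the whole chart, it is outwardly transverse to the smoothed boundary of any polydisc centered at $v_\lambda$, using the corner smoothings of Example~\ref{ex:CP2-standard}; in particular to $\partial\widetilde Z_\lambda$.  This yields the desired Weinstein structure $(\widetilde Z_\lambda,\omega_{FS}|_{\widetilde Z_\lambda},V,\phi)$ on each sector, and doing this for each $\lambda$ produces the Weinstein trisection $\Phi_c(\TT_0)$.  The only substantive obstacle is the key computation identifying $V$ as a positive multiple of the radial vector field; everything else follows formally from the toric symmetry of $\cptwo$ and the fact that both $Z_\lambda$ and $\widetilde Z_\lambda$ are star-shaped polydiscs centered at the same toric vertex in a common affine chart.
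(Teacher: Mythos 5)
Your proposal is correct and follows the paper's argument closely: both work in the affine chart at the toric vertex of each sector, identify the Liouville vector field dual to the K\"ahler primitive of $\omega_{FS}$ as a positive function times the radial vector field, and conclude outward transversality to the smoothed boundary of the rescaled bidisc. Your explicit formula $V=\tfrac{F}{2}\bigl(x_1\partial_{x_1}+y_1\partial_{y_1}+x_2\partial_{x_2}+y_2\partial_{y_2}\bigr)$ is consistent with the paper's normalization; the paper reaches the same conclusion via the ansatz $V=hV_0$ and the relation $h\bigl(dg(V_0)+g\bigr)=g$ without writing $h$ out in closed form.
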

	
	\begin{proof}
		The three sectors of the standard trisection are given in homogeneous coordinates by $[z_0:z_1:z_2]$ by
		$$Z_\lambda = \{|z_\lambda|,|z_{\lambda+1}|\leq |z_{\lambda+2}|\},$$
		for $i=0,1,2$ (where the indices are considered mod $3$). The effect of the rescaling map $\Phi_c$, in terms of homogeneous coordinates on the target $[w_0:w_1:w_2]$ is $w_0=z_0$, $w_1=z_1$, $w_2=cz_2$. Therefore the image of the trisection under the rescaling is given by the three sectors
		$$Z_1' = \{|w_1|\leq |w_0|, |w_2|\leq c|w_0| \}, \hspace{.3cm} Z_2' = \{|w_0|\leq |w_1|, |w_2|\leq c|w_1| \}, \hspace{.3cm} \textrm{ and }  \hspace{.3cm} Z_3' = \{|w_0|,|w_1|\leq \frac{1}{c}|w_2|\}$$
		
		Note that the Fubini-Study form on any affine chart is explicitly exact: $\omega_{FS} = d(-d^\C(\log(1+|w|^2)))$. The Liouville vector field $V$ dual to the primitive $\eta = -d^\C(\log(1+|w|^2))$ has the property that it is obtained from the radial vector field $V_0$ by rescaling by a positive function, $V=hV_0$. We can see this as follows. Let $f_0(w) = |w|^2$ and let $\eta_0=-d^\C f_0$. Then $\eta_0$ is the standard Liouville form for the Darboux symplectic form $\omega_0=d\eta_0$ and its vector field dual is the radial vector field $V_0 = x_1\partial_{x_1}+y_1\partial_{y_1}+x_2\partial_{x_2}+y_2\partial_{y_2}$ (with $\iota_{V_0}d\eta_0 = \eta_0$). Now, the chain rule shows that $\eta = g\eta_0$ where $g(w) = (1+|w|^2)^{-1}$ and $\omega_{FS} = d\eta = dg\wedge \eta_0+gd\eta_0$. Then if $V=hV_0$ is a multiple of the radial vector field by a function we have 
		$$\iota_V\omega_{FS} = h\iota_{V_0}(dg\wedge \eta_0+gd\eta_0) = h(dg(V_0)\eta_0+g\eta_0).$$
		Therefore by choosing $h$ appropriately so that $h(dg(V_0)+g) = g$, we get that $V=hV_0$ is the vector field dual to the Liouville form $\eta$.
		
		Each of our three rescaled sectors lies in one of the three affine charts, so it suffices to show that the corresponding Liouville vector field $V$ (or equivalently the radial vector field $V_0$) is outwardly transverse to the boundary of the sector (after smoothing the corner).
		
		In the affine chart where $w_0=1$, we have the sector $\{|w_1|\leq 1, |w_2|\leq c \}$ with boundary 
		$$\{|w_1|^2=1, |w_2|\leq c\} \cup \{|w_1|\leq 1, |w_2|^2=c^2 \}.$$
		The radial vector field $V_0$ is transverse to the first piece because $d(|w_1|^2)(V_0) = |w_1|^2=1>0$ and on the second piece because $d(|w_2|^2)(V_0) = |w_2|^2=c>0$. We can find an approximation smoothing the corner by looking at the hypersurface
		$$Y=\left\{ \frac{1}{N}\left( |w_1|^2+\frac{1}{c^2}|w_2|^2 \right) + |w_1|^{2N}+\frac{1}{c^{2N}}|w_2|^{2N} =1  \right\}$$
		The vector field $V_0$ is positively transverse to $Y$ because
		$$d\left(\frac{1}{N}\left( |w_1|^2+\frac{1}{c^2}|w_2|^2 \right) + |w_1|^{2N}+\frac{1}{c^{2N}}|w_2|^{2N}  \right)(V_0)$$
		$$= \frac{1}{N}d(|w_1|^2)(V_0)+\frac{1}{Nc^2}d(|w_2|^2)(V_0)+N|w_1|^{2N-2}d(|w_1|^2)(V_0)+\frac{N}{c^{2N}}|w_2|^{2N-2} d(|w_2|^2)(V_0)$$
		which is a positive combination of positive quantities (when $w_1=0$ or $w_2=0$ some of the quantities are $0$ and others are positive, otherwise all terms in the sum are positive).
		
		In the other affine charts, the computations are similar to show that the radial vector field $V_0$, and thus the Liouville vector field $V$, is positively transverse to the (smoothed) boundary of the sectors.
	\end{proof}
	
	Now we have all the tools to deduce the main result of this section.
	
	\begin{theorem}\label{thm:symplbridgepos}
		If $g\colon S\to \cptwo$ is a symplectic braided surface in $\cptwo$, then there is an ambient symplectic isotopy $H_t\colon \cptwo\to \cptwo$ such that $H_1(g(S))$ is in bridge position with respect to a genus one Weinstein trisection of $\cptwo$.
	\end{theorem}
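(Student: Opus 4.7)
The plan is to assemble Lemmas~\ref{l:equator}, \ref{l:bridgepos}, \ref{l:rescale}, and \ref{l:sympliso} into a single argument, using the fact that the standard trisection $\TT_0$ is a genus one Weinstein trisection by Example~\ref{ex:CP2-standard}. First, I would invoke Lemma~\ref{l:equator} to obtain an isotopy through symplectic braided surfaces from $g(S)$ to a symplectic braided surface $Q'$ all of whose critical values under $\pi$ lie on the equator of $\cpone$.

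Next, Lemma~\ref{l:bridgepos} produces a fiber-preserving diffeomorphism $\Phi$, induced by some $\phi\colon\cpone\to\R_{>0}$, such that $\Phi(Q')$ is in singular bridge position with respect to $\TT_0$. I would realize $\Phi$ as a fiber-preserving isotopy $\Phi_s$ with $\Phi_0=\mathrm{id}$ and $\Phi_1=\Phi$. By the construction in the proof of Lemma~\ref{l:bridgepos}, this isotopy can be arranged to begin with a constant rescaling $\Phi_{c_0}$ (which yields a symplectic braided isotopy for $c_0\leq 1$ via Lemma~\ref{l:rescale}(1) and linear interpolation from the identity) followed by local fiber-preserving modifications supported near the singular fibers of $\pi|_{Q'}$. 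To keep each intermediate surface symplectic, I would invoke Lemma~\ref{l:rescale}(2): by compactness of both $[0,1]$ and $S$, we can further rescale by a sufficiently small uniform constant so that $\Phi_s(Q')$ remains a family of symplectic braided surfaces throughout.

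Concatenating this with the isotopy from the first step yields a topologically equisingular symplectic isotopy from $g(S)$ to a symplectic braided surface in bridge position with respect to $\TT_0$ (or its rescaled version, which remains Weinstein by Lemma~\ref{l:scaletrisect}). I would then apply Lemma~\ref{l:sympliso} to produce the ambient symplectic isotopy $H_t$ of the theorem statement, with $H_1(g(S))$ agreeing with the target surface outside small neighborhoods of the singular points. Since the topological structure near each singular point is preserved under the isotopy, and all such singular points lie in the interior of the sector $Z_3$ by the construction in Lemma~\ref{l:bridgepos}, the trivial-tangle structure in each trisection handlebody and the singular-disk-tangle structure in each sector of $H_1(g(S))$ match those of $\Phi(Q')$; hence $H_1(g(S))$ is in bridge position with respect to the desired genus one Weinstein trisection.

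The main obstacle is the second paragraph: verifying that the fiber-preserving isotopy implementing Lemma~\ref{l:bridgepos} can be performed through symplectic braided surfaces rather than merely braided ones. This requires careful use of the rescaling controls in Lemma~\ref{l:rescale} together with compactness in the time parameter to produce a uniform rescaling constant valid for the entire one-parameter family. Once this is in place, the upgrade to an ambient symplectic isotopy via Lemma~\ref{l:sympliso} is routine, modulo the observation that bridge position is preserved under the modification inside singularity neighborhoods because the topological singularity types do not change.
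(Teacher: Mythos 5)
Your overall strategy matches the paper's: first Lemma~\ref{l:equator}, then Lemma~\ref{l:bridgepos}, then realize the resulting fiber-preserving map as an isotopy, control symplecticness via Lemma~\ref{l:rescale} and a uniform rescaling constant, recognize the rescaled trisection as Weinstein via Lemma~\ref{l:scaletrisect}, and finally upgrade to an ambient symplectic isotopy via Lemma~\ref{l:sympliso}. You have correctly identified the key obstacle — making the fiber-preserving isotopy from Lemma~\ref{l:bridgepos} symplectic throughout — but your proposed resolution does not actually close it.

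The gap is in your invocation of Lemma~\ref{l:rescale}(2) ``by compactness of both $[0,1]$ and $S$.'' Lemma~\ref{l:rescale}(2) requires that the map be \emph{positively transverse to the fibers of $\pi$}, i.e.\ $g^*(dx_1\wedge dy_1)>0$. This fails precisely at the critical points of $\pi|_{Q'}$ (tangencies, nodes, cusps), which are points of $S$. So a blanket application of Lemma~\ref{l:rescale}(2) to all of $S$, even using its compactness, does not produce the uniform rescaling constant you need. The paper's proof resolves this by splitting $S$: it sets $S'=S\setminus(\pi\circ\Psi_1\circ g)^{-1}(\cup_j V_j)$ where the $V_j$ are the neighborhoods of the equatorial critical values on which $\phi$ is constant, applies Lemma~\ref{l:rescale}(2) with time-compactness only to the compact $S'$ (where positive transversality holds), and handles the complementary region $\pi^{-1}(V_j)$ using Lemma~\ref{l:rescale}(1). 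The fact that Lemma~\ref{l:rescale}(1) applies there is not automatic: the paper interpolates via the explicit family $\phi_t=\tfrac{t}{C}\phi+(1-t)$, which is constant \emph{and bounded above by~$1$} on each $V_j$ (after the initial normalization $\Phi_{1/C}$ with $C=\sup\phi$). That pointwise bound by~$1$ is exactly the hypothesis of Lemma~\ref{l:rescale}(1). Your proposal mentions Lemma~\ref{l:rescale}(1) only for the initial constant rescaling and does not build in this normalization or the two-region argument, so as stated the family $\Phi_s(Q')$ is not known to stay symplectic near the singular fibers. Once this splitting and the $\phi_t\leq 1$ control are in place, the rest of your argument goes through as in the paper.
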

	
	\begin{proof}
		By Lemma~\ref{l:equator}, we get a symplectic isotopy $\Psi_t\colon \cptwo\to \cptwo$ such that $\Psi_0=id$ and the critical values of $\pi\circ \Psi_1\circ g$ are points $r_1,\ldots, r_n$ on the equator of $\cpone$.
		
		Then by Lemma \ref{l:bridgepos}, there exists a function $\phi\colon \cpone\to \R_{>0}$ such that 
		\begin{itemize}
			\item $\phi$ is constant on open neighborhoods $V_j$ of $r_j$ for $j=1,\cdots, n$, and
			\item for the map $\Phi$ induced by $\phi$, $\Phi(Q)$ is in bridge position with respect to the standard trisection on $\cptwo$.
		\end{itemize}
		
		Let 
		$$C=\max_{[z_0:z_1]\in \cpone}(\phi([z_0:z_1])).$$
		Then $\Phi_{1/C}\circ \Phi(Q)$ is in bridge position with respect to $\Phi_{1/C}(\TT_0)$.
		Let $\phi_t = \frac{t}{C}\phi+1-t$. Then for $t\in[0,1]$, $\phi_t\colon \cpone\to \R_{>0}$ induces a fiber preserving map $\Phi_t\colon\cptwo\to \cptwo$. Note that $\phi_t$ is constant and less than or equal to $1$ in $V_j$, for $j=1,\ldots, n$. Since $\Psi_1(g(S))$ is symplectic, Lemma~\ref{l:rescale}(1) applied to the restriction of $g$ to these neighborhoods, implies that $\Phi_t(\Psi_1(g(S)))\cap \pi^{-1}(V_j)$ is symplectic for all $t\in[0,1]$ and $j=1,\ldots, n$.
		
		Outside of $\pi^{-1}(V_j)$ it is possible that $\Phi_t(\Psi_1(g(S)))$ fails to be symplectic at some time $t$. Let $S' = S\setminus (\pi\circ \Psi_1\circ g)^{-1}(V_j)$. Note that $\pi=\pi\circ\Phi_t$. Because the critical values of $\pi\circ \Phi_t \circ \Psi_1 \circ g$ are contained in the neighborhoods $V_j$, the map $\Phi_t\circ \Psi_1\circ g\colon S'\to \cptwo$ is positively transverse to the fibers of $\pi$ and $S'$ is compact. Therefore by Lemma~\ref{l:rescale}(2), there exists a sufficiently small $0<c_t\leq 1$ such that the induced constant rescaling map $\Phi_{c_t}$ has $\Phi_{c_t}(\Phi_t(\Psi_1(g(S'))))$ is symplectic. Since $[0,1]$ is compact, we can choose $c=\min_{t\in [0,1]}\{c_t\}>0$ such that $\Phi_c(\Phi_t(\Psi_1(g(S'))))$ is symplectic for all $t\in[0,1]$. Then $\Phi_c(\Phi_t(\Psi_1(g(S))))$ is symplectic for all $t\in[0,1]$ because it is symplectic outside $\pi^{-1}(V_j)$ by the choice of $c$ and it is symplectic inside $\pi^{-1}(V_j)$ because $\Phi_t(\Psi_1(g(S)))$ was symplectic and $\Phi_c$ preserves symplecticness here by Lemma~\ref{l:rescale}(1).
		
		Using all this information, we can construct the isotopy through symplectic braided surfaces from $g\colon S\to \cptwo$ to a symplectic braided surface in bridge position with respect to a genus one Weinstein trisection on $\cptwo$ as the concatenation of the following isotopies. First, we isotope $g(S)$ to $\Psi_1(g(S))$ using $\Psi_t$ from Lemma~\ref{l:equator}. Then we isotope $\Psi_1(g(S))$ to $\Phi_c(\Psi_1(g(S)))$ through the isotopy of constant rescaling maps $\Phi_{1-t(1-c)}$; this is an isotopy through symplectic braided surfaces by Lemma~\ref{l:rescale}(1). Finally, we concatenate with the isotopy $\Phi_c\circ \Phi_t\circ \Psi_1\circ g\colon S\to \cptwo$ described above. Because $\Phi_1$ has the property that $\Phi_1\circ\Psi_1\circ g(S)$ is in bridge position with respect to the trisection $\Phi_{1/C}(\TT_0)$ on $\cptwo$, we have that $\Phi_c\circ\Phi_1\circ\Psi_1\circ g(S)$ is in bridge position with respect to the image $\Phi_{c/C}(\TT_0)$ of the standard trisection under $\Phi_{c/C}$. This is a genus one Weinstein trisection of $\cptwo$ by Lemma~\ref{l:scaletrisect}.
		
		By Lemma \ref{l:sympliso}, there is an ambient symplectic isotopy $H_t\colon \cptwo\to \cptwo$ such that $H_t(g(S))$ agrees with this family of symplectic braided surfaces, outside of an arbitrarily small neighborhood of the singular points. In the neighborhood of the singular points, $H_t(g(S))$ will have the same topological type of the singularity (node/cusp) but the analytic type may slightly differ. By choosing the neighborhoods of the singularities sufficiently small, we can assume that these neighborhoods are contained on the interior of $Z_3':=\Phi_{c/C}(Z_3)$ and therefore $H_1(g(S))$ is also in bridge position.
		
	\end{proof}

	\section{Lifting the Weinstein structure}
	\label{sec:lifting}

	Given a singular symplectic branched covering $f\colon (X,\omega)\to (\cptwo,\omega_{FS})$, Auroux constructs a symplectic structure $\omega_f$ on $X$ which is an arbitrarily small perturbation of $f^*\omega_{FS}$ \cite[Proposition 10]{Auroux}. Note that because $f$ has critical points, $f^*\omega_{FS}$ cannot be non-degenerate along the critical locus. To remedy this, $\omega_f$ is built by adding a small multiple of an exact form which is supported near the critical locus of $f$. Although $\omega_f$ is not identical to our original symplectic form $\omega$, we will see in Lemma \ref{l:symplectomorphic} that $(X,k\omega)$ and $(X,\omega_f)$ are symplectomorphic and this suffices to show that a Weinstein trisection of $(X,\omega_f)$ induces a Weinstein trisection of $(X,\omega)$.
	
%
%
%
	
	\begin{theorem}
			\label{thm:pullbackWeiTri}
			Suppose that $Q\subset\CP^2$ is a symplectic singular surface in bridge position with respect to a Weinstein trisection $\TT_0$ of $\CP^2$.  Let $f\colon X\to \CP^2$ be a singular branched covering with the same smooth local models as a symplectic singular branched covering (local diffeomorphism, cyclic branching, and the cusp).  Then there exists a symplectic form $\omega_f=f^*\omega_{FS}+\varepsilon d\phi$ on $X$ such that $f^*\TT_0$ is a Weinstein trisection for $(X,\omega_f)$.
	\end{theorem}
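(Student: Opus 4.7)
The plan is to construct the Weinstein structure on each sector $\widetilde Z_\lambda = f^{-1}(Z_\lambda)$ by pulling back the Weinstein data of $\TT_0$ from $\cptwo$ and correcting it by Auroux's perturbation $1$-form. Theorem~\ref{prop:branch-tri} already supplies the topological trisection $f^*\TT_0$, so only the symplectic half remains. Let $(\eta_\lambda, V_\lambda, \phi_\lambda)$ be the Weinstein data on $Z_\lambda$ coming from $\TT_0$, so that $d\eta_\lambda = \omega_{FS}|_{Z_\lambda}$, $\iota_{V_\lambda}\omega_{FS} = \eta_\lambda$, and $V_\lambda$ is outwardly transverse to a smoothing of $\partial Z_\lambda$.

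Recall that Auroux's perturbed form is $\omega_f = f^*\omega_{FS} + \epsilon\, d\rho$, where $\rho$ is a $1$-form supported in an arbitrarily small neighborhood of the ramification locus $R\subset X$. I would set $\widetilde\eta_\lambda := f^*\eta_\lambda + \epsilon\,\rho|_{\widetilde Z_\lambda}$, so that $d\widetilde\eta_\lambda = \omega_f|_{\widetilde Z_\lambda}$, and then define $\widetilde V_\lambda$ to be its $\omega_f$-dual vector field, with candidate Morse function $\widetilde\phi_\lambda = f^*\phi_\lambda$ (to be smoothed near $R$ if its Hessian degenerates). Away from $R$, where $\omega_f = f^*\omega_{FS}$ and $f$ is a local diffeomorphism, $\widetilde V_\lambda$ coincides with $f^*V_\lambda$, and the three Weinstein axioms (outward transversality to the smoothed boundary, Morse-type zeros, absence of limit cycles) follow directly from those of $V_\lambda$, after a generic perturbation placing the zeros of $V_\lambda$ off the $2$-dimensional set $f(R)$.

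The main obstacle is the analysis inside a neighborhood of $R$, and especially where $R$ meets $\partial \widetilde Z_\lambda$. Because $Q=f(R)$ is only in bridge position rather than contained in the interiors of the sectors, $R$ crosses $\partial\widetilde Z_\lambda$ along the preimages of the seams $\cT_\lambda \cup \cT_{\lambda+1}$, so transversality of $\widetilde V_\lambda$ to $\partial\widetilde Z_\lambda$ must be checked directly in each of the local models of Examples~\ref{ex:cone-br} (cyclic branching and the cusp), using coordinates compatible with the toric radial Liouville structure on $Z_\lambda$ near its boundary. The key calculation is that for the branching model $(z_1,z_2)\mapsto(z_1^d,z_2)$, the pullback of a radial-type Liouville field on the base remains a (rescaled) radial-type field on the cover, hence is outwardly transverse to the preimage of a sufficiently convex hypersurface; the cusp model is handled by a parallel but more delicate computation, with the perturbation $\epsilon\rho$ chosen precisely so that $\omega_f$ is nondegenerate and $\widetilde V_\lambda$ is well-defined and transverse. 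Choosing $\epsilon$ uniformly small across $X$ guarantees that the correction $\epsilon\rho$ does not disturb transversality, Morse-type zeros, or gradient-likeness anywhere in $\widetilde Z_\lambda$, and assembling the local and unbranched regions yields the desired Weinstein trisection of $(X,\omega_f)$.
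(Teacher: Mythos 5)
Your overall architecture (pull back $\eta_\lambda$ by $f$, add Auroux's perturbation $1$--form, check transversality in the local models) captures the first half of the paper's argument: it is essentially how the paper verifies that $\widetilde\eta_\lambda := f^*\eta_\lambda + \phi$ is a Liouville primitive whose dual vector field is outwardly transverse to $\partial \widetilde Z_\lambda$, using the comparison with Geiges's contact branched-cover construction. But then you stop at the Liouville condition; the paper explicitly warns that ``a Weinstein structure is stronger than a Liouville structure,'' and that gap is exactly what your proposal does not close.

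The genuine missing step is the gradient-like/Morse part. You propose $\widetilde\phi_\lambda = f^*\phi_\lambda$ (smoothed near $R$) and assert the Weinstein axioms ``follow directly'' away from $R$ and are handled by ``a parallel but more delicate computation'' near $R$. But nothing in the proposal actually rules out the failure of gradient-likeness after the $\epsilon\,d\rho$ perturbation, nor does it account for the new critical points that must appear near the ramification locus. A simple count shows this cannot be finessed: away from $R$, $f^*\phi_\lambda$ has $k$ index-$0$ and $k$ index-$1$ critical points (degree $k$ cover of $S^1\times B^3$), giving Euler characteristic $0$, while $\widetilde Z_\lambda \cong \natural^{\widetilde k_\lambda}(S^1\times B^3)$ has Euler characteristic $1-\widetilde k_\lambda$. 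So the critical structure near $R$ carries essential topology, and a direct construction of a compatible Morse function there is a substantial and unaddressed analytic problem. The paper sidesteps all of this: since $\partial\widetilde Z_\lambda \cong \#^{\widetilde k_\lambda}(S^1\times S^2)$ carries a planar contact structure and has (after a trivial collar) a unique Weinstein filling by Wendl and Cieliebak--Eliashberg, it suffices to produce any Liouville filling structure — which is precisely the easier transversality check you do carry out. Without invoking that result (or supplying the explicit gradient-like vector field and Morse function near $R$), the proof is incomplete.
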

		
	\begin{proof}
		We know from Theorem~\ref{prop:branch-tri} that $f^*\TT_0$ is smoothly a trisection. We will verify that each sector with its restricted symplectic form is a Weinstein domain filling $\#^k(S^1\times S^2)$.
		
		Branched coverings have been studied in both symplectic and contact topology. The elemental question of how to lift the symplectic or contact structures from the base to the cover is answered in \cite[Proposition 10]{Auroux} and \cite[Theorem 7.5.4]{Geiges} in the symplectic and contact cases respectively. In our case, we want to lift the symplectic filling structure of $(S^3,\xi_{std})$ on $(Z_\lambda,\omega_{FS}|_{Z_\lambda})$ to a symplectic filling structure on $(\widetilde Z_\lambda,\omega_f)$ of $\#^{k_\lambda}(S^1\times S^2)$ with its unique tight contact structure. The key point we will see is that the constructions of symplectic and contact structures on the branch cover are compatible with each other. This is what will show us that the symplectic structure constructed on the cover is compatible with the pull-back trisection $f^*\TT_0$.
		
		Because the branch locus in $\cptwo$ is in bridge trisected position, its intersection with $\partial Z_\lambda$ is a smooth link. The singular points of the branch locus are away from $\partial Z_\lambda$. Let $i\colon\partial Z_\lambda\to Z_\lambda$ be the inclusion. Let $\omega_\lambda:=\omega_{FS}|_{Z_\lambda}$, and let $\eta_\lambda$ be the Liouville form ($\eta_\lambda = \omega_\lambda(V_\lambda, \cdot)$) which satisfies $d\eta_\lambda = \omega_\lambda$. The transversality of the Liouville vector field implies that $\alpha_\lambda:=i^*\eta_\lambda$ is a contact form on $\partial Z_\lambda$ (see the end of this proof for how to understand this equivalence if you are not already familiar). Because the branch locus is symplectic, this link $Q\cap \partial Z_\lambda$ is a transverse link with respect to the contact planes $\xi_\lambda = \ker\alpha_\lambda$. 
		
		The symplectic form $\omega_f$ on the cover $X$ is constructed as follows. Initially we start with the form $f^*\omega_{FS}$. This is certainly closed since $d(f^*\omega_{FS})=f^*d\omega_{FS}=0$, but it fails to be non-degenerate near the singular points of $f$. At points where $f$ is a local diffeomorphism, $f^*\omega_{FS}$ will be non-degenerate, but at singular points where $f$ is modeled on a cyclic branched covering $f(z_1,z_2)=(z_1^d,z_2)$ or a cusp $f(z_1,z_2) = (z_1^3-z_1z_2,z_2)$, $f^*\omega_{FS}$ has kernel in the $\{dz_2=0\}$ direction. Therefore, we need to add something to $f^*\omega_{FS}$ to achieve non-degeneracy. Choose coordinate charts where the local models for $f$ hold, such that the coordinate charts containing a cusp model are disjoint from $\partial \widetilde Z_\lambda$ for all $\lambda\in\Z_3$. The ramification locus $R$ in each coordinate chart is the set $\{z_1=0\}$ which is locally parameterized by the coordinate $z_2$. 
		
		Following Auroux \cite[Proposition 10]{Auroux}, choose a radius $\rho>0$ such that the polydisk $B_{2\rho}(0)\times B_{2\rho^2}(0)$ is contained in the local coordinate patches with cusp models, and the polydisk $B_{2\rho}(0)\times B_{2\rho}(0)$ is contained in the local coordinate patches in cyclic branch covering models. Let $\{U_j\}$ be an cover of the ramification locus $R$ by such polydisks.
		Let $\chi_1^j$ be a cut-off function on $\C$ that is $1$ on $B_\rho(0)\subset \C$ and $0$ outside $B_{2\rho}(0)$. Let $\chi_2^j$ be a cut-off function on $\C$, is $1$ on $B_{\rho^2}(0)\subset \C$ and $0$ outside $B_{2\rho^2}(0)$. Choose these cut-offs such that, using the coordinate models in the $U_j$, we have that $\{\chi_{s_j}^j(z_2)\}$ (where $s_j=1$ for a coordinate patch with a cyclic branching chart and $s_j=2$ for a coordinate patch with a cusp model) is a partition of unity on $R$ subordinate to $\{U_j\cap R\}$.
		
		On each $U_j$ with coordinates $z_1=x_1+iy_1$ and $z_2=x_2+iy_2$, define 
		$$\phi_j = \frac{1}{2}\chi_1^j(z_1)\chi_{s_j}^j(z_2)(x_1dy_1-y_1dx_1)$$
		where $s_j=1$ for a coordinate patch with a cyclic branching chart and $s_j=2$ for a coordinate patch with a cusp model. (Note that Auroux uses $x_1dy_1$ instead of $\frac{1}{2}(x_1dy_1-y_1dx_1)$ as a primitive for the $dx_1\wedge dy_1$, but this choice does not affect the computation.) Where the cut-off functions are identically~$1$, $d\phi_j = dx_1\wedge dy_1$. This is positive on the kernel of $df$, which makes up for the non-degeneracy of $f^*\omega_{FS}$. Finally, observe that because of the cut-off functions, each $\phi_j$ extends by zero to the entire manifold. Let $\phi=\sum_j \phi_j$.
		
		
		Finally define $\omega_f := f^*\omega_{FS}+\varepsilon d\phi$. Then for $\varepsilon$ sufficiently small, Auroux verifies that $\omega_f$ is non-degenerate \cite[Proposition 10]{Auroux}. Note that Auroux only considers cyclic branched covering models $(z_1,z_2)\mapsto (z_1^d,z_2)$ when $d=2$, but the computation is unaffected by using higher values of $d$ because the kernel of $df$ is the same and $\phi$ is constructed using the coordinates upstairs.
		
		Now restrict $\omega_f$ to $\widetilde Z_\lambda$. There,
		$$\omega_f = f^*\omega_\lambda+\varepsilon d\phi|_{\widetilde Z_\lambda} = f^*d\eta_\lambda +\varepsilon d\phi=d\left(f^*\eta_\lambda+\varepsilon\phi \right)$$
		Let $i_\lambda \colon\partial \widetilde Z_\lambda \to \widetilde Z_\lambda$ be the inclusion. Restricting this primitive to the boundary gives an induced $1$--form on $\partial \widetilde Z_\lambda$, $i_\lambda^*(f^*\eta_\lambda+\varepsilon\phi) = f^*\alpha_\lambda+\varepsilon i_\lambda^*\phi$, which we can verify is contact.  Since the ramification locus intersects $\partial \widetilde Z_\lambda$ transversally along a link, $\varepsilon i_\lambda^*\phi$ is supported in a neighborhood of this ramification link. The coordinate $z_1$ in each $U_j$ provides a coordinate on the normal bundle to this ramification link. The natural contact form on the branched cover constructed in \cite[Theorem 7.5.4]{Geiges} is (up to a constant factor) $f^*\alpha_\lambda+g(r) r^2d\theta$ where $(r,\theta)$ are polar coordinates on the normal bundle $(z_1=re^{i\theta})$ and $g(r)$ is a cut-off function. Noting that $(x_1dy_1-y_1dx_1) = r^2d\theta$, we find that the same computation verifies that $i_\lambda^*(f^*\eta_\lambda+\varepsilon\phi)$ is a contact form on $\partial \widetilde Z_\lambda$.
		
		Note that because the primitive $\widetilde \eta_\lambda:=f^*\eta_\lambda+\varepsilon\phi$ for $\omega_f|_{\widetilde Z_\lambda}$ restricts to a positive contact form on the boundary, it gives a Liouville vector field $\widetilde V_\lambda$ (defined by $\omega_f(\widetilde V_\lambda,\cdot) = \widetilde \eta_\lambda$) which is outwardly transverse to the boundary defined by $\iota_{\widetilde V_\lambda}(\omega_f|_{\widetilde Z_\lambda}) = \widetilde \eta_\lambda$. To understand why, let $i_\lambda \colon\partial \widetilde Z_\lambda \to \widetilde Z_\lambda$ be the inclusion and $\widetilde \alpha_\lambda=i_\lambda^*\widetilde \eta_\lambda$. The claim is that the property that $\widetilde V_\lambda$ be outwardly transverse to the boundary is equivalent to the condition that $\widetilde \alpha_\lambda \wedge d\widetilde \alpha_\lambda$ is a positive volume form. This is because $\widetilde \alpha_\lambda \wedge d\widetilde \alpha_\lambda = i^*\widetilde \eta_\lambda \wedge i^*\omega_f = i^*(\iota_{\widetilde V_\lambda}\omega_f \wedge \omega_f)$. Since $\omega_f\wedge \omega_f$ is a positive volume form, $\widetilde V_\lambda$ is outwardly transverse to $\partial \widetilde Z_\lambda$ if and only if there is a positively oriented frame $(\zeta_1,\zeta_2,\zeta_3)$ for $T\partial \widetilde Z_\lambda$ such that $\omega_f\wedge\omega_f (\widetilde V_\lambda,\zeta_1,\zeta_2,\zeta_3)>0$. But 
		$$\omega_f\wedge \omega_f(\widetilde V_\lambda, \zeta_1,\zeta_2,\zeta_3) = \iota_{\widetilde V_\lambda}\omega_f \wedge \omega_f (\zeta_1,\zeta_2,\zeta_3) = \widetilde \alpha_\lambda\wedge d\widetilde \alpha_\lambda(\zeta_1,\zeta_2,\zeta_3)$$
		so this condition is equivalent to asking that $\widetilde \alpha_\lambda \wedge d\widetilde \alpha_\lambda>0$.
				
		Therefore $(\widetilde Z_\lambda,\omega_f|_{\widetilde Z_\lambda}, \widetilde \eta_\lambda)$ is a Liouville filling of its boundary.
		
		In general a Weinstein structure is stronger than a Liouville structure: a Weinstein structure requires that the Liouville vector field is gradient-like for some Morse-function. However when the boundary is $\#^k(S^1\times S^2)$, the contact structure is planar, so for any Liouville filling, after possibly adding a trivial collar to the boundary, the symplectic structure is compatible with a subcritical Weinstein structure by \cite{Wendl} (relevant arguments on holomorphic disk filling of $\#^k(S^1\times S^2)$ can also be found in \cite{Eliash,CE}). Because the skeleton will lie on the interior of the domain (since the Liouville vector field is transverse to the boundary), the addition of the trivial collar does not make a difference for our purposes. Since there is a unique Weinstein structure on this filling by \cite{CE}, we can deform the Liouville vector field through Liouville vector fields for the fixed symplectic structure which are all positively transverse to the boundary until it agrees with the standard Weinstein structure.
		We conclude that $(\widetilde Z_\lambda, \omega_f|_{\widetilde Z_\lambda})$ supports a Weinstein domain structure as required.
	\end{proof}

	\begin{lemma}
		\label{l:symplectomorphic}
		Let $f:(X,\omega)\to (\cptwo,\omega_{FS})$ be a symplectic singular branched cover, where $X$ is a closed $4$-manifold. For $\omega_f=f^*\omega_{FS}+\varepsilon d\phi$ as above with $\varepsilon$ sufficiently small, $(X,\omega_f)$ and $(X,k\omega)$ are symplectomorphic, where $k$ is the degree of the branch cover.
	\end{lemma}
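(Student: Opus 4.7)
The plan is to apply Moser's trick to a smooth family of cohomologous symplectic forms interpolating between $k\omega$ and $\omega_f$.

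First, I would verify that $[\omega_f]=[k\omega]$ in $H^2(X;\R)$: since $\varepsilon\,d\phi$ is exact, this reduces to $[f^*\omega_{FS}]=[k\omega]$, which is built into the definition of a symplectic singular branched cover (and is used implicitly in the remark following Theorem~\ref{thm:auroux}, where $(X,\widetilde{\omega}_t)$ is already asserted to be symplectomorphic to $(X,k\omega)$ by Moser for $t<1$).

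Next, I would consider the straight-line family
$$\omega_t:=(1-t)k\omega+t\omega_f=\widetilde{\omega}_t+t\varepsilon\,d\phi,\qquad t\in[0,1],$$
which connects $k\omega$ to $\omega_f$, remains in the cohomology class $[k\omega]$ for every $t$, and is smooth and closed. The crux is to show $\omega_t$ is nondegenerate for all $t$, after shrinking $\varepsilon$ if necessary. I would split $[0,1]$ into two closed pieces. On $[0,1-\delta]$ for some small $\delta>0$, the form $\widetilde{\omega}_t$ is symplectic by the defining property of a symplectic singular branched cover, and compactness of both $X$ and the subinterval gives a uniform positive lower bound on $\widetilde{\omega}_t\wedge\widetilde{\omega}_t$; for $\varepsilon$ small enough (depending on $\delta$), the $C^0$-bounded perturbation $t\varepsilon\,d\phi$ cannot destroy nondegeneracy. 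On $[1-\delta,1]$, I would instead rewrite $\omega_t=\omega_f+(1-t)(k\omega-\omega_f)$; since $\omega_f$ is symplectic by Auroux's choice of $\varepsilon$ and $X$ is compact, $\omega_f\wedge\omega_f$ is bounded below and $C^0$-continuity gives symplecticity of $\omega_t$ for $\delta$ small compared to this bound.

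Once $\omega_t$ is a smooth family of cohomologous symplectic forms on the closed manifold $X$, the classical Moser argument applies verbatim: write $\omega_f-k\omega=d\beta$, solve $\iota_{V_t}\omega_t=-\beta$ for a time-dependent vector field $V_t$, and integrate to an ambient isotopy $\Psi_t\colon X\to X$ with $\Psi_0=\mathrm{id}$, whose standard computation yields $\tfrac{d}{dt}(\Psi_t^*\omega_t)=0$; hence $\Psi_1\colon(X,k\omega)\to(X,\omega_f)$ is the desired symplectomorphism. The main obstacle is the symplecticity check on $[1-\delta,1]$, where $\widetilde{\omega}_t$ degenerates along the branch locus as $t\to 1$: the perturbation $\varepsilon\,d\phi$, engineered precisely to heal this degeneracy at the endpoint $\omega_f$, is what allows the family to extend symplectically over all of $[0,1]$, at the cost of coordinating the choices of $\varepsilon$ and $\delta$ in a consistent order (fix Auroux's $\varepsilon_0$, choose $\delta$ for the second estimate, then shrink $\varepsilon$ further if needed for the first).
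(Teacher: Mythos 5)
Your proof follows the paper's argument exactly in structure: both set up the straight-line family $\omega_t=(1-t)k\omega+t\omega_f$, observe it stays in the cohomology class $k[\omega]$, verify nondegeneracy, and then apply Moser to get the ambient isotopy. Where the two diverge is on the nondegeneracy step. The paper simply asserts that for $\varepsilon$ small, $\omega_t$ is symplectic for all $t\in[0,1)$ and handles $t=1$ separately as $\omega_f$; you try to be more explicit by decomposing $[0,1]$ into $[0,1-\delta]\cup[1-\delta,1]$.

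The issue is that your proposed choice of order --- fix Auroux's $\varepsilon_0$, choose $\delta$ for the $[1-\delta,1]$ estimate, then shrink $\varepsilon$ if needed for the $[0,1-\delta]$ estimate --- does not actually close the loop. The lower bound on $\omega_f\wedge\omega_f$ used in the $[1-\delta,1]$ estimate is not uniform in $\varepsilon$: near the ramification locus $f^*\omega_{FS}$ is degenerate and $\omega_f\wedge\omega_f$ is dominated by the cross-term $2\varepsilon\, f^*\omega_{FS}\wedge d\phi$, so $\inf_X\omega_f\wedge\omega_f$ scales roughly like $\varepsilon$. Meanwhile the lower bound on $\widetilde\omega_t\wedge\widetilde\omega_t$ over $[0,1-\delta]$ scales roughly like $\delta$ (it is controlled by $\widetilde\omega_{1-\delta}$ at the ramification locus, where $\widetilde\omega_{1-\delta}\wedge\widetilde\omega_{1-\delta}\approx 2\delta\, k\omega\wedge f^*\omega_{FS}$). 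Shrinking $\varepsilon$ thus forces $\delta$ smaller, which forces $\varepsilon$ smaller, and whether this iteration has a positive fixed point depends on constants you have not controlled. The way to resolve this is not the two-piece decomposition but the observation (implicit in the paper's appeal to Auroux, Proposition~10) that $\omega_f$ is built to tame an almost complex structure $\widetilde J$ that is $C^0$-close to one compatible with $\omega$; then $k\omega$ also tames $\widetilde J$, so every convex combination $\omega_t$ tames $\widetilde J$ and is therefore nondegenerate for all $t\in[0,1]$ simultaneously. That single taming statement is what makes the family genuinely symplectic, and the Moser argument then proceeds as you wrote.
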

	
	\begin{proof}
		The cohomology classes of $\omega_f$ and $k\omega$ are the same because $[\omega_f] = [f^*\omega_{FS}+\varepsilon d\phi] =  [f^*\omega_{FS}] = k[\omega]$ where the last equality follows from Definition~\ref{def:ssbc}. Also by definition of a symplectic singular branched cover, the $2$--forms 
		$$tf^*\omega_0 +(1-t)k\omega$$ 
		are symplectic for all $t\in [0,1)$. Therefore for $\varepsilon$ sufficiently small, the $2$--forms 
		$$\omega_t:=t(f^*\omega_0+\varepsilon d\phi)+(1-t)k\omega$$
		are still symplectic for all $t\in [0,1)$. For $t=1$, $\omega_1=\omega_f$. Therefore we have a $1$--parameter family of symplectic forms $\omega_t$ in the same cohomology class interpolating between $\omega_f$ and $k\omega$, so by Moser's theorem\cite{Moser}, there is an ambient isotopy $\Psi_t\colon X\to X$ such that $\Psi_t^*(\omega_t)=\omega_0 =k\omega$. Therefore $\Psi_1$ gives the required symplectomorphism.
	\end{proof}
	
	We conclude this section with the proof of our main result.
	
	\begin{reptheorem}
		{thm:Main}
		Every closed, symplectic 4--manifold $(X,\omega)$ admits a Weinstein trisection.
	\end{reptheorem}
	
	\begin{proof}[Proof of Theorem~\ref{thm:Main}]
		Let $(X,\omega)$ be a closed, symplectic 4--manifold.  By Theorem~\ref{thm:AK}, there is a singular symplectic branched covering $f\colon X\to \CP^2$ such that the branch locus $Q$ is a symplectic braided surface.
		
		By Theorem~\ref{thm:symplbridgepos} and Lemma~\ref{l:sympliso}, we may assume after a symplectomorphism of $\cptwo$ that $Q$ is in bridge position with respect to a genus one trisection $\TT_0$ of $\CP^2$. 

		By Theorem~\ref{prop:branch-tri}, the preimages $W_\lambda = f^{-1}(Z_\lambda)$ of the sectors $Z_\lambda$ of the trisection $\TT_0$ give a trisection $\widetilde\TT = f^*(\TT_0)$ of $X$.
		
		By Theorem \ref{thm:pullbackWeiTri}, $X$ carries a symplectic structure $\omega_f$ such that the trisection $\TT=f^*\TT_0$ is Weinstein with respect to $\omega_f$. By Lemma \ref{l:symplectomorphic}, there is a symplectomorphism $\Psi\colon (X,\omega_f) \to (X,k\omega)$, where $k$ is the degree of the covering. Therefore $\Psi_*(\TT)$ gives a Weinstein trisection on $(X,k\omega)$. 
		
		Let $W_\lambda$ be the three sectors of $\Psi_*(\TT)$ on $X$ for $\lambda=1,2,3$ and let $\omega_\lambda = \omega|_{W_\lambda}$. Then there are Weinstein strutures $(W_\lambda, k\omega_\lambda, V_\lambda, \phi_\lambda)$. The Weinstein conditions on $V_\lambda$ and $\phi_\lambda$ are that $d\iota_{V_\lambda}(k\omega_\lambda) = k\omega_\lambda$ and $V_\lambda$ is gradient-like for $\phi_\lambda$. Because $k$ is a constant, this implies $d\iota_{V_\lambda}(\omega_\lambda) = \omega_\lambda$. In other words $V_\lambda$ is also a Liouville vector field for $\omega_\lambda$. Therefore globally rescaling the symplectic form by a constant, we still have Weinstein structures on the three sectors $(W_\lambda, \omega_\lambda, V_\lambda, \phi_\lambda)$. Therefore $\Psi_*(\TT)$ is a Weinstein trisection on $(X,\omega)$.
	\end{proof}
	
	\section{Examples}
	\label{sec:Examples}
	
	In this section we construct a number of examples of Weinstein trisections via the following procedure.
	First, we find a (singular) bridge trisected surface in $\cptwo$, we appeal to techniques of the first author~\cite{LC-Symp} to argue that the surface is a symplectic braided surface.  Next, we describe a branched covering $f$ of the surface and use Theorem~\ref{thm:pullbackWeiTri} to conclude that the pull-back trisection is Weinstein with respect to the (perturbed) lift $\omega_f$ of $\omega_{FS}$.
	
	\subsection{Weinstein trisections for complex hypersurfaces in $\CP^3$}
		\label{ex:hyper}
		Let $S_d$ denote a smooth degree $d$ complex hypersurface in $\CP^3$.  Let $\TT_{S_d}$ denote the efficient trisection of $S_d$ constructed in~\cite[Section~4]{LM-Complex} as the $d$--fold cyclic covering of the genus one trisection of $\CP^2$, branched along an efficient bridge trisection of a curve $\cC_d$ of degree $d$ in $\CP^2$. A priori, the efficient bridge trisection given in~\cite{LM-Complex} corresponds to a surface $\cS$ that is smoothly isotopic to $\cC_d$. However, by~\cite[Proposition~3.3]{LC-Symp}, $\cS$ is isotopic through bridge trisected surfaces to a symplectic surface. Now, by Theorem~\ref{thm:pullbackWeiTri}, we have that the trisections $\TT_{S_d}$ are Weinstein.
	
	\subsection{A Weinstein trisection for $\CP^2\#\overline\CP^2$}
		\label{ex:TCQ}
		Consider the surface $\cC$ described by the shadow diagram in Figure~\ref{fig:lifts}(a). (See~\cite{LM-Complex} for details regarding shadow diagrams.)  This diagram corresponds to a singular bridge trisection, with each patch given as the cone on a right-handed trefoil. We encourage the reader to verify that each of the links
		$$(S^3_\lambda,K_\lambda) = (H_\lambda,\cT_\lambda)\bigcup_{(\Sigma,\bold x)}\overline{(H_{\lambda+1},\cT_{\lambda+1})}$$
		is a right-handed trefoil: For each $\lambda\in\Z_3$, find an orientation-preserving diffeomorphism from the shadow diagram onto the genus one Heegaard surface in $S^3$ such that $H_\lambda$ is inside the torus and $\overline H_{\lambda+1}$ is outside, and consider the knot obtained by perturbing the interiors of the shadow arcs into their respective solid tori.
		
		To see that this surface has degree four, we briefly recall how the homology of $\cptwo$ can be understood from its trisection diagram (cf.~\cite{FKSZ}). Following~\cite[Subsection~6.2]{LM-Complex}, we note that
		$$H_2(X;\Z) = L_\gamma\cap(L_\alpha+L_\beta),$$
		where $L_\lambda\subset H_1(\Sigma;\Z)$ is the subspace spanned by the curve $\lambda$.  Note that $[\gamma] = -[\alpha]-[\beta]$ in $H_1(\Sigma;\Z)$. It follows that $[\cpone] = [\gamma] = -[\alpha]-[\beta]$. Let $\phi$ denote the triangular region of $\Sigma\setminus(\alpha\cup\beta\cup\gamma)$ containing the positive bridge point, together with a compressing disk bounded by each of the $\lambda$ in $H_\lambda$.  Then, $\phi$ is a 2--chain in $\cptwo$ representing $[\cpone]$ in $H_2(\cptwo;\Z)$.
		
		To apply Proposition~6.1 of~\cite{LM-Complex}, we need a shadow diagram for $\cC$ where the shadow arcs are disjoint from the cut curves for their respective solid tori. This could be accomplished by isotoping the cut curves (cf. Figure~\ref{fig:cusp}(c)), but then we would need to recalculate $\phi$.  Instead, we perturb the bridge trisection to be 4--bridge, as shown in Figure~\ref{fig:cusp}(b).  Now, it follows that $[\cpone]\cdot[\cC] = 4$, since there are now four positive bridge points in the triangular region of $\phi$.
		
		Using the techniques of Section~3 of~\cite{LC-Symp}, combined with a slight generalization of~\cite[Propositions~3.2 and~3.3]{LC-Symp} to a setting that includes cusp singularities, it can be shown that $\cC$ is isotopic through singular  bridge trisections to a symplectic surface. Thus, $\cC$ is a symplectic tri-cuspidal quartic. (Note that by~\cite{GollaStarkston} there is a unique symplectic isotopy class of tri-cuspidal quartics.)
		In what follows, we will describe a singular, irregular, 3--fold branched covering
		$$f\colon \cptwo\#\overline\CP^2\to\cptwo$$
		that is branched along the tri-cuspidal quartic $\cS$. Applying Theorem~\ref{prop:branch-tri}, we will lift a genus one Weinstein trisection of $\cptwo$ to a genus two trisection of $\cptwo\#\overline\CP^2$, and this lifted trisection will be Weinstein by Theorem~\ref{thm:pullbackWeiTri}.
		
		Figure~\ref{fig:cusp}(c) is a doubly pointed trisection diagram for $(\CP^2,\cC)$ that is obtained from Figure~\ref{fig:cusp}(a) by isotoping the $\lambda$--curves to be disjoint from the shadow arcs and forgetting the shadow arcs.  (The arcs can be recovered uniquely by connecting the two bridge points in the complement of the respective $\lambda$--curve; see~\cite{GayMei} for more discussion of doubly pointed trisection diagrams and the connection to knotted spheres in 4--manifolds and note the relevant connection here that $\cC$ is rational.) Figure~\ref{fig:cusp}(d) is an isotopic doubly pointed trisection diagram that has the feature that the $\alpha$, is now parallel to an edge of the square representing the torus.

	\begin{figure}[h!]
		\centering
		\includegraphics[width=.76\textwidth]{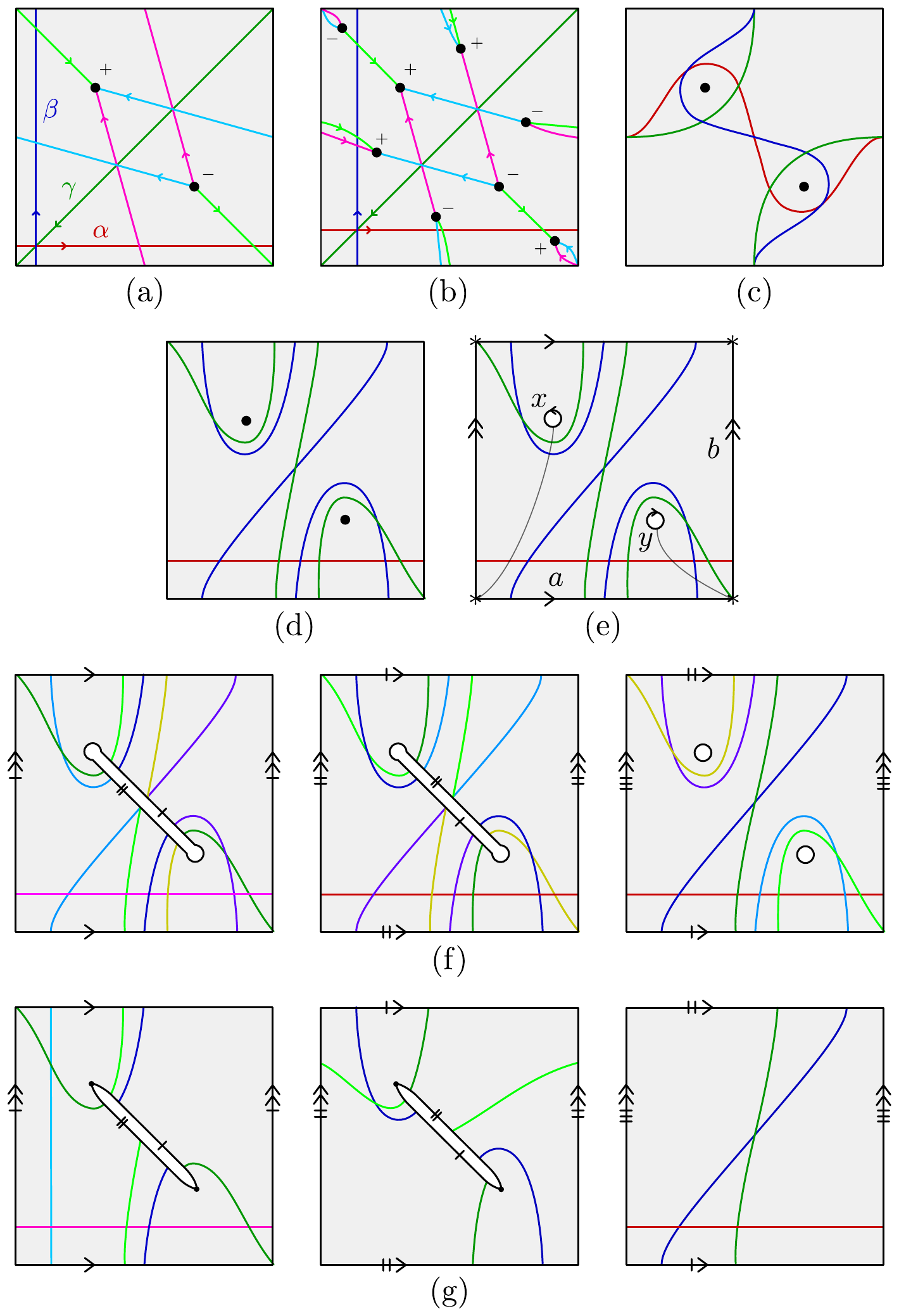}
		\caption{(a)--(d) Diagrams for a tri-cuspidal quartic. (e) The diagram from (c) with neighborhoods of the bridge points removed. (f) The three sheets corresponding the irregular 3--fold cover of the punctured surface. (g) A trisection diagram (after identification) for the irregular 3--fold branched cover of the tri-cuspidal quartic.}
		\label{fig:lifts}
	\end{figure}
		
		Figure~\ref{fig:cusp}(e), which is just this new doubly pointed trisection diagram with disk neighborhoods of the bridge points removed, can be used to calculate $\pi_1(\CP^2\setminus\nu(\cC))$ as follows.  Choose as base-point the corner(s) of the square, let the classes of the horizontal and vertical edges be $a$ and $b$, respectively.  Connect the punctures to the corners with whiskers as shown, and denote the loops following these arcs and encircling the punctures $x$ and $y$, respectively.
		Then, the compressing disk bounded by $\alpha$ gives the relation $a=1$, the compressing disk bounded by $\beta$ gives the relation $y^{-1}a^{-1}b^{-1}x^{-1}bayb = 1$, and the compressing disk bounded by $\gamma$ gives the relation $y^{-1}b^{-1}y^{-1}a^{-1}b^{-1}x^{-1}b = 1$.
		We also get the relation $ayba^{-1}b^{-1}x^{-1}=1$ from the core surface. From all this, we get the presentation
		$$\pi_1(\CP^2\setminus\nu(\cC)) = \langle b,y\,|\, yby=byb, y^2=b^{-2}\rangle,$$
		noting that $x=y$.  The fact that this choice of base-point and whiskers yields $x=y$ and $a=1$ will make drawing the branched cover easier, since this presentation gives rise to a particularly nice representation to $\mathscr S_3$.
		
		\begin{remark}
			We remark in passing that this group is closely related to several well-known groups via the following sequence of quotients:
			$$\mathcal B_3 \cong \pi(T_{3,2}) \longtwoheadrightarrow \pi(\cC)\longtwoheadrightarrow\pi(\cP_{T_{3,2}})\cong \text{PSL}_2(\Z).$$
			Here, $\mathcal B_3$ is the braid group on three strands, $\pi(T_{3,2})$ is the group of the right-handed trefoil, $\pi(\cC)$ is the group of the tri-cuspidal quartic, $\pi(\cP_{T_{3,2}})$ is the group of the knotted projective plane in $S^4$ obtained by forming the connected sum of the spun trefoil with the standard cross-cap, and $\text{PSL}_2(\Z)$ is the modular group.
		\end{remark}
		
		Consider the representation $\rho\colon\pi_1(\CP^2\setminus\nu(\cC))\twoheadrightarrow \mathscr S_3$, where $y\mapsto (1\,2)$ and $b\mapsto (2\,3)$. Note that we must have $a\mapsto 1$ and $x\mapsto (1\,2)$.  We remark that this representation is the unique (up to conjugation) representation onto $\mathscr S_3$, since any such representation is determined by choosing distinct transpositions for $\rho(y)$ and $\rho(b)$.  Let $f^\circ\colon X^\circ\to\CP^2\setminus\nu(\cC)$ be the irregular 3--fold (unbranched) covering corresponding to $\rho$. This covering can be extended to a (non-singular) branched covering over $\cptwo\setminus\nu(\text{the cusps of $\cC$})$.  On the boundary of the neighborhood of the cusps, this branched covering induces the unique irregular 3--fold covering of $S^3$ over its self with branch set the right-handed trefoil, as in Examples~\ref{ex:cone-br}(2).  Thus, we can cone off over the cusps to obtain a singular branched covering $f\colon X\to \cptwo$ with only cusp singularities.  In particular, $f$ satisfies the hypotheses of Theorem~\ref{thm:pullbackWeiTri}.  Note that $f$ is the unique such map, by our analysis of $\rho$.
		
		To describe a trisection of the cover $X$, we first describe $\widetilde\Sigma = f^{-1}(\Sigma)$.  Let $\Sigma^\circ$ denote the core surface $\Sigma$, cut along an arc connecting the punctures and along a curve parallel to $\alpha$. Then $\Sigma^\circ$ is a punctured annulus whose core curve is given by $a$. Let $\widetilde\Sigma^\circ$ denote three copies of $\Sigma^\circ$, and note that the restriction $f\vert_{\text{Int}(\widetilde\Sigma^\circ)}$ can be thought of as the three-to-one map $\text{Int}(\widetilde\Sigma^\circ)\to\text{Int}(\Sigma^\circ)$, since $\rho(a)=1$.

		 The three sheets, $\widetilde\Sigma^\circ_1$, $\widetilde\Sigma^\circ_2$, and $\widetilde\Sigma^\circ_3$, of $\widetilde\Sigma^\circ$ are shown in Figure~\ref{fig:lifts}(f).  To recover $\widetilde\Sigma$, we must understand the identifications of these sheets described in this figure.  In the left sheet, the sides of $\widetilde\Sigma^\circ_1$ corresponding to $a$ are re-glued as they are in $\Sigma$; since $\rho(b)=(2\,3)$, the curve $b$ lifts to a curve in this first sheet.  Now, the first sheet is a punctured torus with boundary coming from the cut along the arc connecting the punctures.  This puncture is identified, as depicted, with the corresponding puncture in the second sheet.  This is because this puncture came from cutting open $\Sigma^\circ$ along an arc that connected the punctures $x$ and $y$, each of whose image under $\rho$ is $(1\,2)$.  For the same reason, in the third sheet, the cut along this arc is re-glued; the curves $x$ and $y$ lift to curves in the third sheet.  Finally, the two $a$--curves of the third sheet are identified with the two $a$--curves of the second sheet, as shown. The relevant fact here is that $\rho(b)=(2\,3)$, so $b$ is double covered by a curve contained in the union of the second and third sheets, which is a punctured torus. Finally, the meridional punctures $x$ and $y$ are filled in in the third sheet; this gives a branched covering of surfaces.
		 
		 By Theorem~\ref{prop:branch-tri}, the genus one trisection of $\cptwo$ will lift to a trisection of $\CP^2\#\overline{\CP^2}$.  However, we will now check explicitly that the cover of the core surface described above extends over the other pieces of the trisection.
		 It remains to check that the map $f\colon\widetilde\Sigma\to\Sigma$ described above can be extended to three branched coverings of handlebodies.  Since $f^{-1}(\lambda)$ is a collection of curves for each $\lambda$, it suffices to check that each such collection is a cut system for $\widetilde\Sigma$.  We saw above that $\widetilde\Sigma$ is the union of two punctured tori, hence a genus two surface.  So, we need that $f^{-1}(\lambda)$ contains two non-parallel, non-separating curves for each $\lambda$.  This is verifiable in Figure~\ref{fig:lifts}(f); each $f^{-1}(\lambda)$ consists of three curves, two of which are parallel.  In Figure~\ref{fig:lifts}(g), a redundant $\lambda$--curve has been deleted, and isotopy of the remaining two curves has been performed.
		 		 
		 The result is that Figure~\ref{fig:lifts}(f) is a genus two trisection diagram for $\CP^2\#\overline\CP^2$.  To verify this last claim, one could appeal to~\cite{MZ-Genus} to deduce that the diagram is either reducible or corresponds to $S^2\times S^2$.  Then, one could calculate homology following~\cite{FKSZ} or~\cite[Subsection~6.2]{LM-Complex}.  Alternatively, one could perform handleslides to find a reducing curve (a separating curve disjoint from all six cut curves) and verify explicitly that the two resulting summands correspond to $\cptwo$ and $\overline\CP^2$.

	\bibliographystyle{amsalpha}
	\bibliography{WeiTri-References}


\end{document}